\documentclass[letterpaper,reqno,10pt,twoside]{amsart}
\usepackage{amsmath,amsthm,amsfonts,amssymb,euscript,mathrsfs,graphics,color,latexsym,marginnote}
\usepackage{stmaryrd}
\usepackage[dvips]{graphicx}
\usepackage[left=1 in, right=1 in,top=1 in, bottom=1 in]{geometry}
\usepackage{hyperref}
\usepackage[toc,page]{appendix}
\usepackage{relsize}
\usepackage[shortlabels]{enumitem}
\usepackage[all]{xy}
\usepackage{float}

\usepackage{xargs}
\usepackage[dvipsnames]{xcolor}

\usepackage{hyperref}
\hypersetup{colorlinks=true, pdfstartview=FitV,linkcolor=blue!70!black,citecolor=red!70!black, urlcolor=green!60!black}
\definecolor{labelkey}{rgb}{0.6,0,0}

\usepackage[colorinlistoftodos,prependcaption,textsize=small]{todonotes}
\newcommandx{\change}[2][1=]{\todo[#1]{#2}}
\newcommandx{\unsure}[2][1=]{\todo[linecolor=red,backgroundcolor=red!25,bordercolor=red,#1]{#2}}
\newcommandx{\rmk}[2][1=]{\todo[linecolor=blue,backgroundcolor=blue!25,bordercolor=blue,#1]{#2}}
\newcommandx{\info}[2][1=]{\todo[linecolor=OliveGreen,backgroundcolor=OliveGreen!25,bordercolor=OliveGreen,#1]{#2}}
\newcommandx{\improvement}[2][1=]{\todo[linecolor=Plum,backgroundcolor=Plum!25,bordercolor=Plum,#1]{#2}}
\newcommandx{\thiswillnotshow}[2][1=]{\todo[disable,#1]{#2}}

\makeatletter
\renewcommand \theequation {%
\ifnum \c@section>\z@ \@arabic\c@section.%
\fi\@arabic\c@equation} \@addtoreset{equation}{section}
\@namedef{subjclassname@2020}{2020 Mathematics Subject Classification}
\makeatother

\newtheorem{theorem}{Theorem}[section]

\newtheorem{lemma}[theorem]{Lemma}

\theoremstyle{definition}

\theoremstyle{remark}
\newtheorem{remark}{Remark}[section]

\def\XXint#1#2#3{{\setbox0=\hbox{$#1{#2#3}{\int}$ }
\vcenter{\hbox{$#2#3$ }}\kern-.6\wd0}}

\def\p{\partial}

\begin{document}

\title{Linear Stability and Jacobi Kernels of Three-Dimensional Sessile Drops
}

\author[X. Yang]{Xiaoding Yang}
\address[X. Yang]{
\newline\indent Division of Applied Mathematics, Brown University, 170 Hope St., Providence, RI 02912, USA}
\email{xiaoding\_yang@brown.edu}

\maketitle
\begin{abstract}
    We consider the linear stability of three-dimensional sessile drops with a free contact line. The equilibrium surface is an axisymmetric solution of the Young--Laplace equation with gravity, fixed volume, and prescribed contact angle. We derive the constrained second variation of the gravity--capillary energy and formulate the associated Jacobi problem. Although variational stability gives nonnegativity, the second variation is necessarily degenerate because horizontal translations preserve the energy. Our main result identifies this degeneracy completely. Under the pressure--volume nondegeneracy condition $(dV/d\lambda\neq 0)$, we prove that the kernel of the constrained Jacobi operator is exactly the two-dimensional space generated by horizontal translations. The proof combines the geometric structure of the Jacobi operator with a Fourier-mode analysis: the axisymmetric mode is ruled out by the pressure--volume condition, the first mode gives translations, and all higher modes are excluded by comparison. This provides the precise linear nondegeneracy underlying stability of droplet dynamics modulo translations.

\end{abstract}

\section{Introduction}

\subsection{Formulation and Origins of the Problem}

Sessile droplets provide a fundamental model for capillary surfaces with contact
lines. At equilibrium, the shape of a droplet resting on a horizontal substrate
is determined by the balance among surface tension, gravity, and the constraint
of fixed volume. In the three-dimensional axisymmetric setting, the free surface
may be represented by a radial profile \(\rho_0=\rho_0(\theta)\). The corresponding
Euler--Lagrange equation is the Young--Laplace equation
\begin{align}\label{equ:1.2.1}
    \sigma H + g\rho_0\cos\theta - \lambda =0,
\end{align}
together with the prescribed contact-angle condition at the contact line and
the fixed-volume constraint. Here \(\sigma>0\) is the surface-tension coefficient,
\(g>0\) is the gravitational constant, \(H\) is the mean curvature of the free
surface, and \(\lambda\) is the Lagrange multiplier associated with the volume
constraint. Thus sessile drops naturally lie at the intersection of geometric
variational problems, elliptic free-boundary theory, and interfacial fluid
mechanics.

The equation \eqref{equ:1.2.1}, together with the boundary condition encoding
the prescribed contact angle, originates from the classical works of Young
\cite{Young}, Laplace \cite{Laplace}, and Gauss \cite{Gauss}. For physical
background on wetting, contact angles, and contact-line phenomena, we refer to
de Gennes \cite{deGennes1985} and Joanny--de Gennes
\cite{JoannyDeGennes1984}.

In the absence of dynamics, the equilibrium shape can be characterized as a
critical point, and often as a minimizer, of the gravity--capillary energy under
a fixed-volume constraint. A fundamental variational existence result was
obtained by Gonzalez \cite{Gonzalez1976}, who proved the existence of
energy-minimizing sessile drops with prescribed volume and prescribed contact
angle. His proof is based on the direct method in the calculus of variations in
the class of sets of finite perimeter: one considers minimizing sequences for
the gravity--capillary energy, uses compactness and lower semicontinuity of
perimeter, and applies symmetrization to obtain rotationally symmetric
minimizers. Subsequent works established further regularity and convexity
properties of such minimizing drops
\cite{Gonzalez1977Regularity,GonzalezTamanini1977}.

Finn \cite{Finn} later studied the symmetric sessile liquid drop and derived
quantitative information on the size, shape, and uniqueness of such equilibria
through a detailed ODE analysis of the symmetric Young--Laplace profile.
Combined with Gonzalez's existence theorem for energy minimizers, Finn's
uniqueness result implies that the symmetric sessile drop is the global energy
minimizer among admissible drops with the same volume and contact angle. This
gives a global variational stability statement. In a more recent variational
framework, Baer \cite{Baer2015} considered liquid drops and crystals under
gravity with anisotropic surface tensions. By means of anisotropic
symmetrization, he proved existence, convexity, and symmetry of minimizers, and
in the smooth anisotropic case obtained uniqueness through an ODE
characterization. These results provide important foundations for viewing
sessile-drop equilibria as stable constrained minimizers of the capillary
energy.

For dynamical problems with moving contact lines, however, one needs more than
the nonnegativity of the second variation. In order to close energy estimates
for perturbations of equilibrium, it is essential to understand whether the
second variation is coercive on the admissible perturbation space, possibly
after factoring out natural symmetry directions. In the vessel setting studied
in \cite{Guo,Guo2024}, the relevant second variation enjoys strict positivity
under the imposed geometric constraints. In the sessile-drop geometry, by
contrast, strict positivity cannot hold on the full admissible space. Indeed,
horizontal translations of the equilibrium droplet preserve the energy, volume,
and contact angle, and therefore generate neutral directions. In the
two-dimensional droplet setting, Yang \cite{Yang,Yang2026GlobalDynamic} proved
that the horizontal translation mode spans the kernel of the second variation.
For three-dimensional sessile drops, the natural question is whether the two
horizontal translations are the only degeneracies.

Although the existence, uniqueness, and classical stability theory of sessile
and pendent drops has a long history, the explicit determination of the kernel
of the constrained second variation appears not to have been carried out in the
general three-dimensional sessile-drop setting considered here. The classical
works identify stability properties and neutral translation modes, but they do
not formulate and solve the full constrained Jacobi-kernel problem in a
functional framework suitable for PDE applications. In particular, to the best of our knowledge, the full constrained Jacobi-kernel characterization for the three-dimensional free-contact-line sessile-drop problem, in a form suitable for PDE stability theory, has not been explicitly established.

Beyond the variational theory of sessile-drop minimizers, there are related
works on the stability of pendent drops and more general capillary surfaces. A
classical work in this direction is Wente's study of axially symmetric pendent
drops \cite{Wente1980}. Wente analyzed stability for several physical settings,
including constant pressure with fixed circular opening, fixed volume with
prescribed contact angle, and fixed volume with fixed circular opening. His work
shows that the stability of a capillary surface is strongly tied to the
structure of the equilibrium branch and to the constraints imposed on
perturbations. In particular, for axially symmetric configurations, the
stability problem can be reduced by separation of variables to a family of
Sturm--Liouville problems. This is closely related in spirit to the Fourier-mode
analysis used in the present paper.

The pressure--volume derivative plays a particularly important role in
constrained stability theory. The turning-point principle, developed abstractly
by Maddocks and later applied to capillary surfaces by Lowry and Steen, relates
folds in a distinguished equilibrium diagram to degeneracy and possible changes
of stability \cite{Maddocks1987,LowrySteen1995}. In capillary problems, the
relevant diagram is often the pressure--volume curve. A modern overview of this
viewpoint can be found in the review of Bostwick and Steen, where constrained
capillary-surface stability is described through second variation, conjugate
points, and Poincar\'e-type turning-point criteria \cite{BostwickSteen2015}. In
the present sessile-drop problem, the same mechanism appears at the level of the
constrained Jacobi operator: differentiating an equilibrium branch with respect
to the pressure parameter \(\lambda\) produces a pressure-variation solution of
the linearized equation, and this solution satisfies the linearized volume
constraint precisely when \(dV/d\lambda=0\). Thus the assumption
\[
    \frac{dV}{d\lambda}\neq 0
\]
rules out pressure--volume turning-point degeneracy.

\subsection{Main result}
 We consider the gravity--capillary energy in spherical coordinates
\begin{equation}{\label{equ:F}}
\mathcal{F}(\rho)=
\int_{\mathbb S^2_+}
\left[
\sigma \rho \sqrt{\rho^2+|\nabla_{\mathbb S^2}\rho|^2}
+ \frac{g}{4}\rho^4 \cos\theta\right]d\omega-\frac{\sigma\cos\gamma_e}{2}
\int_0^{2\pi}\rho(\pi/2,\phi)^2d\phi,
\end{equation}
under the volume constraint
\begin{align}{\label{eq:E}}
V=\frac13\int_{\mathbb S^2_+}\rho^3d\omega .
\end{align}
\noindent Here $(\mathbb S^2_+)$ denotes the upper hemisphere, $(d\omega)$ is the surface measure on $(\mathbb S^2)$, and $(\nabla_{\mathbb S^2})$ denotes the spherical gradient. Let $\rho_{0}$ be the smooth axisymmetric sessile-drop equilibrium/minimizer under consideration for $\mathcal{F}$. From \cite{Finn}, this minimizer is smooth and axisymmetric.

We now state the main result of the paper.

\begin{theorem}[Kernel characterization modulo translations]\label{thm:main}
Let \(\rho_0=\rho_0(\theta)\) be a smooth axisymmetric sessile-drop equilibrium
arising as a constrained minimizer of the gravity--capillary energy
\(\mathcal{F}\) under the fixed-volume constraint. $\lambda_0$ is the corresponding Euler-Lagrange multiplier. Assume that the corresponding
equilibrium branch satisfies the pressure--volume nondegeneracy condition $\frac{dV}{d\lambda}|_{\lambda=\lambda_{0}}\neq 0 $.
Let \(\delta^2\mathcal F[\rho_0]\) denote the constrained second variation of
\(\mathcal F\) at \(\rho_0\), acting on admissible volume-preserving
perturbations satisfying the linearized contact-angle boundary condition.
Then the kernel of \(\delta^2\mathcal F[\rho_0]\) is exactly the
two-dimensional space generated by horizontal translations. More precisely,
\[
    \ker \delta^2\mathcal F[\rho_0]
    =
    \operatorname{span}
    \left\{
        \left(
            \sin\theta-\frac{\rho_0'(\theta)}{\rho_0(\theta)}
            \cos\theta
        \right)\cos\phi,\,
        \left(
            \sin\theta-\frac{\rho_0'(\theta)}{\rho_0(\theta)}
            \cos\theta
        \right)\sin\phi
    \right\}.
\]
Equivalently, considering each Fourier mode separately, the axisymmetric mode \(m=0\) has no nontrivial admissible
kernel element, the first mode \(m=1\) is generated exactly by the two
horizontal translation modes, and all higher modes \(m\geq2\) have trivial
kernel.
\end{theorem}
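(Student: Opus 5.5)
The plan is to write the Jacobi problem in geometric form, separate variables in $\phi$, and treat each Fourier mode as a Sturm--Liouville problem on $\theta\in(0,\pi/2)$.

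\textbf{Setting up the Jacobi problem.} First compute $\delta^2\mathcal F[\rho_0]$ on perturbations $\rho_0+\varepsilon\psi$. Work with the normal velocity $u=\psi\,\rho_0/\sqrt{\rho_0^2+\rho_0'^2}$. In this variable the quadratic form should become
\[
Q(u)=\int_\Sigma \sigma\bigl(|\nabla_\Sigma u|^2-|A|^2u^2\bigr)+g\,\nu_z u^2\,dA-\int_{\partial\Sigma} q\,u^2\,ds ,
\]
where $\nu_z$ is the vertical component of the normal. The boundary coefficient $q$ comes from the contact angle and the curvature of the substrate-boundary curve. The constraint is $\int_\Sigma u\,dA=0$.

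A kernel element is then a solution of
\[
Lu:=\sigma\Delta_\Sigma u+\sigma|A|^2u-g\nu_z u=c
\]
with a constant $c$ (the multiplier), together with the Robin condition $\partial_\eta u=qu$ on $\partial\Sigma$ and zero mean. Since $\rho_0$ is axisymmetric, expand $u=\sum_m u_m(\theta)e^{im\phi}$. Each mode satisfies $L_m u_m=c\,\delta_{m0}$, where
\[
L_m=L_0-\frac{\sigma m^2}{r(\theta)^2},
\]
$r$ is the distance to the axis, and the Robin condition is inherited mode by mode.

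\textbf{Mode $m=1$.} Horizontal translation gives $u=\nu\cdot e_x$, which solves $Lu=0$ with the Robin condition, because translations preserve energy, volume and contact angle. In the $\psi$ variable this is exactly
\[
\Bigl(\sin\theta-\frac{\rho_0'}{\rho_0}\cos\theta\Bigr)\cos\phi,
\]
and its mean vanishes since it is odd in $\phi$.

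Regularity at the pole $\theta=0$ selects a one-dimensional solution space for the second-order ODE $L_1w=0$: one solution behaves like $\theta$, the other like $\theta^{-1}$. So any kernel element in mode $\pm1$ is a multiple of the translation profile, giving exactly the two-dimensional span.

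\textbf{Modes $m\ge 2$.} This is a comparison argument. The translation profile $w_1=\nu\cdot e_r$ (radial horizontal component) is positive on $(0,\pi/2]$. To see this, note that the drop is a graph over its contact disk only when $\gamma_e\le\pi/2$. In general one instead uses the horizontal normal component along the profile curve, which is positive away from the pole because the generating curve is star-shaped with monotone polar angle; this should follow from Finn's ODE analysis.

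Suppose $u_m$ is a nontrivial regular solution of $L_mu_m=0$ with the Robin condition. Multiply by $w_1$, subtract $u_m L_1w_1$, and integrate with weight $r\,ds$. Both functions satisfy the same Robin condition, so the boundary terms cancel, and one obtains
\[
\sigma(m^2-1)\int u_m w_1\,r^{-1}\,ds=0 .
\]
Next, Sturm comparison shows $u_m$ has no zero in $(0,\pi/2]$: the potential of $L_m$ is strictly smaller than that of $L_1$, and $w_1$ has none there. Hence the integral is nonzero, a contradiction. To make this airtight I would use the variational form instead. Since $w_1>0$ is a kernel element, the ground state of the mode-1 form is $0$. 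Then
\[
Q_m(v)=Q_1(v)+\sigma(m^2-1)\int v^2r^{-2}>0 \quad\text{for } v\neq 0,
\]
so $Q_m$ is positive definite and $\ker L_m$ is trivial. This needs $Q_1\ge0$ on all mode-1 functions. That holds because the volume constraint is vacuous for $m\ne0$, and minimality of $\rho_0$ gives $Q\ge0$ on zero-mean perturbations.

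\textbf{Mode $m=0$, the main obstacle.} The problem here is $L_0u=c$ with the Robin condition and $\int u\,dA=0$. Let $z=\partial_\lambda\rho$ along the equilibrium branch, converted to normal velocity. Differentiating the Young--Laplace equation and the contact-angle condition in $\lambda$ gives $L_0z=1$ with the same Robin condition. Also $\int z\,dA=dV/d\lambda$.

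If $c\ne0$, then $u-cz$ solves the homogeneous problem. If the homogeneous axisymmetric problem has only the trivial solution, then $u=cz$, and the mean condition forces $c\,dV/d\lambda=0$. Since $dV/d\lambda\neq0$ this gives $c=0$ and hence $u=0$.

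The delicate point is excluding a nontrivial homogeneous axisymmetric solution $h$ (the case $c=0$). The approach is as follows. The solution regular at the pole is unique up to scale. If it satisfied the Robin condition, then the regular solution of $L_0z=1$, which is $z$ plus multiples of $h$, could not satisfy the Robin condition unless a Fredholm solvability condition holds; that condition is $\int h\,dA=0$. But $z$ does exist. Hence $\int h\,dA=0$, so $h$ is admissible, and by $Q\ge0$ it lies at the bottom of the constrained spectrum.

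To rule this out I would exploit the differentiability of the branch. The existence of such an $h$ means the linearization of the fixed-volume problem is not invertible, whereas $dV/d\lambda\ne0$ together with the uniqueness of Finn's symmetric drop for each volume and contact angle (an implicit-function/Lyapunov--Schmidt argument) yields invertibility. This last step is where I expect the most care to be needed.
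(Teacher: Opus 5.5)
\textbf{The $m=0$ mode has a gap.} Your $m=1$ and $m\ge2$ arguments are the paper's. For $m=1$ you use the regular $\sim\theta$ versus singular $\sim\theta^{-1}$ behaviour at the pole. For $m\ge2$ you use $Q_m=Q_1+\sigma(m^2-1)\int v^2r^{-2}$, with $Q_1\ge0$ from minimality since the volume constraint is vacuous. In $m=0$, the pressure mode $z=\partial_\lambda\rho$ and the closing volume step are also the paper's.

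The gap is the step you defer. You must show that the regular axisymmetric solution $h$ of the homogeneous equation does \emph{not} satisfy the Robin (linearized contact-angle) condition. Your Fredholm computation is correct, but it cuts the wrong way. It shows that such an $h$ would automatically have $\int_\Sigma h\,dA=0$. It would then be an admissible kernel element with $c=0$, \emph{irrespective} of the value of $dV/d\lambda$. So the pressure--volume hypothesis cannot exclude it. Your appeal to uniqueness of Finn's drop via an implicit-function or Lyapunov--Schmidt argument does not close it either. The implicit function theorem gives local uniqueness from invertibility, not invertibility from uniqueness. An injective solution map can have a degenerate linearization, as $x\mapsto x^3$ does.

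\textbf{The missing ingredient.} You need a second explicit axisymmetric Jacobi field: the vertical translation $v_1=\hat k\cdot\nu$, which is $\eta_1=\cos\theta+\frac{\rho_0'}{\rho_0}\sin\theta$ in radial variables. Gravity breaks vertical invariance, so $\mathcal J v_1=g$ rather than $0$; this follows from the Ruh--Vilms identity and the Young--Laplace equation. The pressure mode instead satisfies $\mathcal J u_\lambda=1$. Hence, up to scale, $h$ corresponds to $\eta^*=\eta_1-g\eta_2$, which is regular at the pole. Since $\eta_2$ satisfies the contact-angle condition, $\mathcal B[\eta^*]=\mathcal B[\eta_1]$. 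A direct computation using the Young--Laplace equation at the contact line gives $\mathcal B[\eta_1]=-\rho_0(\pi/2)\,\kappa_\theta(\pi/2)$. Geometrically, lifting the drop slides the contact line along the profile and changes the contact angle at a rate proportional to the meridian curvature. Strict convexity of the sessile drop (Finn) gives $\kappa_\theta(\pi/2)\neq0$. Hence $\eta^*\neq0$ and $h$ never satisfies the Robin condition.

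Only after this do the admissible $H^1$ solutions of $L_0\eta=c\rho_0^2$ reduce to $c\,\eta_2$. At that point $dV/d\lambda\neq0$ forces $c=0$. Without the nonvanishing of $\kappa_\theta$ at the contact line, your $m=0$ argument is incomplete.
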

As a consequence, the constrained second variation is coercive on the
admissible space modulo the two horizontal translation modes.

\begin{theorem}[Spectral gap and linear stability of droplets]
     Let
\[
\mathcal X
:=
\left\{
\eta\in H^{1}(S^{2}_{+}) :
\int_{S^{2}_{+}}\rho_{0}^{2}\eta\,d\omega=0
\right\}
\]
be the tangent space to the fixed-volume constraint. Define the two
horizontal translation modes by
\[
\tau_{1}(\theta,\phi)
=
\left(
\sin\theta-\frac{\rho_{0}'(\theta)}{\rho_{0}(\theta)}\cos\theta
\right)\cos\phi,
\qquad
\tau_{2}(\theta,\phi)
=
\left(
\sin\theta-\frac{\rho_{0}'(\theta)}{\rho_{0}(\theta)}\cos\theta
\right)\sin\phi .
\]
Then there exists a constant $\mu>0$ such that, for every
$\eta\in\mathcal X$ satisfying the orthogonality conditions
\[
\int_{S^{2}_{+}}\rho_{0}^{2}\eta\,\tau_{1}\,d\omega=0,
\qquad
\int_{S^{2}_{+}}\rho_{0}^{2}\eta\,\tau_{2}\,d\omega=0,
\]
one has
\[
\delta^{2}\mathcal{F}[\rho_{0}](\eta,\eta)
\geq
\mu \|\eta\|_{H^{1}(S^{2}_{+})}^{2}.
\]
\end{theorem}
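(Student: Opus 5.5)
The plan is to deduce the spectral gap from Theorem \ref{thm:main} by a compactness (Fredholm) argument. First, write the constrained second variation as a symmetric bilinear form on $H^1(S^2_+)$,
\[
\delta^2\mathcal F[\rho_0](\eta,\eta)=\int_{S^2_+}\Big(a(\theta)|\nabla_{S^2}\eta|^2+\text{(lower-order terms in }\eta,\nabla\eta)\Big)d\omega+\int_0^{2\pi}b\,\eta(\pi/2,\phi)^2\,d\phi .
\]
Here $a>0$ is bounded above and below, since it comes from the Hessian of $\rho\sqrt{\rho^2+|\nabla\rho|^2}$ at the smooth positive profile $\rho_0$. The boundary term, coming from the contact-angle part of $\mathcal F$, has a smooth coefficient $b$. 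Expanding the integrand and using the trace inequality $\|\eta\|_{L^2(\partial S^2_+)}^2\le \epsilon\|\nabla\eta\|_{L^2}^2+C_\epsilon\|\eta\|_{L^2}^2$, I would obtain a G\aa rding inequality
\[
\delta^2\mathcal F[\rho_0](\eta,\eta)\ \ge\ c_0\|\eta\|_{H^1}^2-C_0\|\eta\|_{L^2}^2 .
\]
Variational stability of the minimizer $\rho_0$ (Gonzalez, Finn) gives $\delta^2\mathcal F[\rho_0](\eta,\eta)\ge 0$ on $\mathcal X$.

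Next, argue by contradiction. Let $\mathcal Y\subset\mathcal X$ be the closed subspace defined by the two weighted orthogonality conditions. Suppose there are $\eta_n\in\mathcal Y$ with $\|\eta_n\|_{H^1}=1$ and $\delta^2\mathcal F[\rho_0](\eta_n,\eta_n)\to 0$. Passing to a subsequence, $\eta_n\rightharpoonup\eta$ weakly in $H^1$ and $\eta_n\to\eta$ strongly in $L^2$ and in the boundary trace (Rellich, compact trace). The conditions defining $\mathcal Y$ are $L^2$-continuous, so $\eta\in\mathcal Y$. From G\aa rding,
\[
c_0\le C_0\|\eta_n\|_{L^2}^2+o(1),
\]
hence $\eta\neq 0$. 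The form splits as a positive principal part, which is weakly lower semicontinuous, plus compact terms. Therefore
\[
\delta^2\mathcal F[\rho_0](\eta,\eta)\le\liminf_n \delta^2\mathcal F[\rho_0](\eta_n,\eta_n)=0,
\]
and nonnegativity forces equality. So $\eta$ minimizes the nonnegative quadratic form on $\mathcal Y$.

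A minimizer of a nonnegative form with value zero lies in its null space: $\delta^2\mathcal F[\rho_0](\eta,\xi)=0$ for all $\xi\in\mathcal Y$. This is the Cauchy--Schwarz inequality for nonnegative forms, or equivalently the first variation of $t\mapsto Q(\eta+t\xi)$. The step I expect to need care is upgrading this to $\eta\in\ker\delta^2\mathcal F[\rho_0]$ on all of $\mathcal X$ in the sense of Theorem \ref{thm:main}, since we only tested against $\mathcal Y$. Write any $\xi\in\mathcal X$ as $\xi=\xi_Y+c_1\tau_1+c_2\tau_2$. This uses that $\tau_i\in\mathcal X$, which holds because translations preserve volume, i.e.\ $\int\rho_0^2\tau_i=0$, and that the $\tau_i$ are linearly independent in the weighted pairing. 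Then
\[
\delta^2\mathcal F[\rho_0](\eta,\tau_i)=0,
\]
because $\tau_i$ lies in the kernel by Theorem \ref{thm:main}. Hence $\eta$ is a weak kernel element of the constrained Jacobi problem, where the Lagrange-multiplier constant absorbs the volume constraint and the natural boundary condition is the linearized contact-angle condition. Elliptic regularity puts $\eta$ in the admissible class.

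Theorem \ref{thm:main} then gives $\eta\in\operatorname{span}\{\tau_1,\tau_2\}$. But $\eta$ is weighted-orthogonal to both $\tau_i$, so $\eta=0$, a contradiction. This yields $\mu>0$.
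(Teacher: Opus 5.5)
Your argument is correct and takes essentially the paper's route. The paper's proof is a one-line appeal to a Poincar\'e-type compactness/contradiction argument from the cited reference, built from the same ingredients you use: a G\aa rding inequality, Rellich and trace compactness, nonnegativity of $\delta^2\mathcal F[\rho_0]$ on $\mathcal X$, and the kernel characterization of Theorem~\ref{thm:main}. Your decomposition $\xi=\xi_Y+c_1\tau_1+c_2\tau_2$ is unnecessary, because the weak limit $\eta$ lies in $\mathcal X$ with $\delta^2\mathcal F[\rho_0](\eta,\eta)=0$ and so is already a kernel element in the sense of Lemma~\ref{lem:ODE} and Theorem~\ref{thm:main}.
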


\subsection{Technical overview and structure of the paper} We first compute the second variation of $\mathcal{F}$ at the equilibrium profile $(\rho_0)$. This yields a bilinear form whose kernel satisfies a constrained Jacobi equation of the form
\begin{align}{\label{eq:Jac_s}}
-\frac{\sigma}{\sin\theta}
\partial_\theta
\left(
\sin\theta\frac{\rho_0^3}{S_0^3}\partial_\theta\eta
\right)
-
\frac{\sigma\rho_0}{S_0\sin^2\theta}
\partial_\phi^2\eta
+Q(\theta)\eta
=
c\rho_0^2,
\end{align}
together with the linearized contact-angle boundary condition and the linearized volume constraint given by
\begin{equation}\label{eq:linearized_volume_spherical}
\int_0^{2\pi}\int_0^{\pi/2}
\rho_0(\theta)^2\eta(\theta,\phi)\sin\theta d\theta d\phi
=0.
\end{equation}
\begin{equation}\label{eq:linearized_contact_angle_mode}
\rho_0 \eta' - \rho_0'\eta=0
\qquad \text{on } \theta=\frac{\pi}{2}.
\end{equation}
 Here $S_0=\sqrt{\rho_0^2+(\rho_0')^2},$
and $(Q(\theta))$ is an explicit coefficient depending only on the equilibrium profile. We will give the detailed derivation of the equation \eqref{eq:Jac_s} and explicit representation of $Q(\theta)$ in Sections 2 and 3.

A key point is that the coefficients are independent of the azimuthal angle $\phi$. We therefore decompose the perturbation into Fourier modes.

\textbf{Fourier mode \(m=0\).}
We first analyze solutions of \eqref{eq:Jac_s} that are independent of the
azimuthal variable \(\phi\), subject to the linearized contact-angle boundary
condition and the linearized volume constraint. This axisymmetric mode is the
most delicate among all Fourier modes.

In Section~4, we study this mode by identifying special solutions of
\eqref{eq:Jac_s}. To this end, we relate the radial operator arising from the
second variation to the geometric Jacobi operator
\[
    \mathcal J u
    =
    -\sigma\Delta_{\Sigma_0}u
    -\sigma |A|^2u
    +g(\hat k\cdot \nu)u,
\]
where \(\Sigma_0\) is the equilibrium free surface, \(A\) is its second
fundamental form, \(\nu\) is the unit normal, and \(\hat k\) is the vertical unit
vector. This geometric reformulation allows us to identify two important
inhomogeneous solutions: the vertical translation mode and the derivative of
the equilibrium branch with respect to the pressure parameter \(\lambda\).

If we regard the equilibrium profile as a smooth branch $ \rho_0=\rho_0(\theta;\lambda),$
then the volume enclosed by the free surface and the flat substrate is also a function of \(\lambda\), denoted by \(V(\lambda)\). The pressure-derivative mode therefore leads naturally to the nondegeneracy condition
\[
    \frac{dV}{d\lambda}|_{\lambda=\lambda_0}\neq 0.
\]
Indeed, differentiating the equilibrium equation along this branch yields a solution of the linearized equation. This pressure-derivative solution satisfies the linearized volume constraint if and only if \(dV/d\lambda=0\). Hence the condition \(dV/d\lambda\neq0\) excludes pressure--volume turning-point degeneracy and prevents the pressure variation from becoming an admissible constrained Jacobi field.

Using these special solutions, we characterize the general axisymmetric
solution of \eqref{eq:Jac_s}. Under the assumption $ \frac{dV}{d\lambda}\neq 0,$
we prove that the \(m=0\) mode has no nontrivial admissible kernel element.

\textbf{Fourier Mode $m=1$.}
We now turn to the first azimuthal mode (m=1).  The normal components of the two horizontal translation vector fields provide two explicit solutions:
\[
\nu\cdot e_1
=
\frac{\rho_0\sin\theta-\rho_0'\cos\theta}{S_0}\cos\phi,
\qquad
\nu\cdot e_2
=
\frac{\rho_0\sin\theta-\rho_0'\cos\theta}{S_0}\sin\phi.
\]
Equivalently, in radial variables the (m=1) amplitude is
\[
\varphi_1(\theta)
=
\sin\theta-\frac{\rho_0'(\theta)}{\rho_0(\theta)}\cos\theta .
\]
We show that this mode satisfies both the Jacobi equation and the linearized contact-angle boundary condition. A singular-solution argument then proves that it is the unique admissible $m=1$ kernel element, up to multiplication by constants. The detailed discussion will be given in Section 5.

\textbf{Fourier Mode $m\geq 2$
.}
 For all higher modes $(m\geq 2)$, we compare the corresponding quadratic form with the $(m=1)$ quadratic form. Since the angular contribution is strictly larger for $(m\geq 2)$, the nonnegativity of the second variation excludes nontrivial higher-mode kernel elements. Combining the analysis of all Fourier modes, we obtain the main result: modulo the pressure--volume nondegeneracy condition, the kernel of the constrained second variation is exactly the two-dimensional space generated by horizontal translations of the sessile droplet. This result identifies the only geometric obstruction to coercivity of the gravity--capillary energy at a sessile equilibrium. It is therefore a key linear ingredient for the nonlinear stability theory of three-dimensional moving-contact-line droplets.

\section{Second variation of the energy functional}

We consider the following energy functional for a three-dimensional droplet sitting on a flat surface.
\begin{align}
    \mathcal{F}^{\lambda}(\rho) = \int_{\mathbb{S}^2_+} \left[ \sigma \rho \sqrt{\rho^2 + |\nabla_{S^{2}} \rho|^2} + \frac{g}{4} \rho^4 \cos\theta - \frac{\lambda}{3} \rho^3 \right] d\omega - \frac{\sigma \cos\gamma_e}{2} \int_0^{2\pi} \rho(\pi/2, \phi)^2 \, d\phi
\end{align}
under the volume constraint
\begin{align}
    V=\frac{1}{3}\int_{S^{2}_+}\rho^{3}d\omega.
\end{align}
All of the terms are represented in spherical coordinates.

To compute the second-order functional derivative (the second variation), we introduce an arbitrary infinitesimal perturbation $\eta(\theta, \phi)$ to the radial profile:
\begin{equation}
    \rho \to \rho + \epsilon \eta
\end{equation}
The second variation is obtained by taking the second derivative with respect to $\epsilon$ and evaluating at $\epsilon = 0$:
\begin{equation}
    \delta^2 \mathcal{F}[\eta] = \left. \frac{d^2}{d\epsilon^2} \mathcal{F}(\rho + \epsilon \eta) \right|_{\epsilon=0}
\end{equation}

Let the integrand of the bulk functional $\mathcal{F}^{\lambda}$ be denoted as the Lagrangian $\mathcal{L}(\rho, \nabla \rho)$, and define $S = \sqrt{\rho^2 + |\nabla \rho|^2}$ to simplify notation:
\begin{equation}
    \mathcal{L}(\rho, \mathbf{p}) = \sigma \rho S + \frac{g}{4} \rho^4 \cos\theta - \frac{\lambda}{3} \rho^3
\end{equation}
where $\mathbf{p} = \nabla \rho$ and $\nabla$ is the simplified version of $\nabla_{S^{2}}$. The second variation of this volume integral expands via the Hessian of $\mathcal{L}$:
\begin{equation}
    \delta^2 \mathcal{L} = \mathcal{L}_{\rho\rho} \eta^2 + 2 \mathcal{L}_{\rho \mathbf{p}} \cdot (\eta \nabla \eta) + (\nabla \eta)^T \mathcal{L}_{\mathbf{p}\mathbf{p}} (\nabla \eta)
\end{equation}

\subsection*{2.1 First Derivatives}
Taking the partial derivatives with respect to $\rho$ and $\mathbf{p}$:
\begin{align}
    \mathcal{L}_\rho &= \sigma S + \frac{\sigma \rho^2}{S} + g \rho^3 \cos\theta - \lambda \rho^2 \\
    \mathcal{L}_{\mathbf{p}} &= \frac{\sigma \rho \mathbf{p}}{S}
\end{align}

\subsection*{2.2 Second Pure Derivative w.r.t. $\rho$}
Differentiating $\mathcal{L}_\rho$ again with respect to $\rho$:
\begin{align}
    \mathcal{L}_{\rho\rho} &= \frac{\partial}{\partial \rho} \left( \sigma S + \frac{\sigma \rho^2}{S} \right) + 3g \rho^2 \cos\theta - 2\lambda \rho \nonumber \\
    &= \sigma \left( \frac{\rho}{S} + \frac{2\rho S - \rho^2 (\rho/S)}{S^2} \right) + 3g \rho^2 \cos\theta - 2\lambda \rho \nonumber \\
    &= \sigma \left( \frac{\rho(\rho^2 + |\nabla \rho|^2) + 2\rho(\rho^2 + |\nabla \rho|^2) - \rho^3}{S^3} \right) + 3g \rho^2 \cos\theta - 2\lambda \rho \nonumber \\
    &= \sigma \frac{2\rho^3 + 3\rho |\nabla \rho|^2}{S^3} + 3g \rho^2 \cos\theta - 2\lambda \rho
\end{align}

\subsection*{2.3 Mixed Derivative w.r.t. $\rho$ and $\mathbf{p}$}
Differentiating $\mathcal{L}_{\mathbf{p}}$ with respect to $\rho$:
\begin{align}
    \mathcal{L}_{\mathbf{p}\rho} &= \frac{\partial}{\partial \rho} \left( \frac{\sigma \rho \mathbf{p}}{S} \right) = \sigma \mathbf{p} \left( \frac{S - \rho (\rho/S)}{S^2} \right) \nonumber= \sigma \mathbf{p} \frac{S^2 - \rho^2}{S^3} = \sigma \frac{|\nabla \rho|^2}{S^3} \nabla \rho
\end{align}

\subsection*{2.4 Second Pure Derivative w.r.t. $\mathbf{p}$ (Tensor)}
Differentiating $\mathcal{L}_{\mathbf{p}}$ with respect to $\mathbf{p}$ yields a rank-2 tensor (where $\mathbf{I}$ is the identity tensor):
\begin{equation}
    \mathcal{L}_{\mathbf{p}\mathbf{p}} = \frac{\partial}{\partial \mathbf{p}} \left( \frac{\sigma \rho \mathbf{p}}{S} \right) = \sigma \rho \left( \frac{\mathbf{I}}{S} - \frac{\mathbf{p} \otimes \mathbf{p}}{S^3} \right)
\end{equation}
Contracting this with the perturbation gradient $\nabla \eta$ yields:
\begin{equation}
    (\nabla \eta)^T \mathcal{L}_{\mathbf{p}\mathbf{p}} (\nabla \eta) = \sigma \rho \left( \frac{|\nabla \eta|^2}{S} - \frac{(\nabla \rho \cdot \nabla \eta)^2}{S^3} \right)
\end{equation}

The boundary term evaluates the contact line at the equator $\theta = \pi/2$:
\begin{equation}
    \mathcal{B}(\rho) = - \frac{\sigma \cos\gamma_e}{2} \int_0^{2\pi} \rho(\pi/2, \phi)^2 \, d\phi
\end{equation}
Subjecting this purely quadratic term to $\rho \to \rho + \epsilon \eta$ and taking the second derivative w.r.t $\epsilon$ yields:
\begin{equation}
    \delta^2 \mathcal{B}[\eta] = - \sigma \cos\gamma_e \int_0^{2\pi} \eta(\pi/2, \phi)^2 \, d\phi
\end{equation}

Finally, by the volume conservation law, we must have $\delta V=0$ which is equivalent to
\begin{align}
    \int_{\mathbb{S}_{+}^{2}}\rho^{2} \eta~d\omega=0.
\end{align}

Combining the expanded bulk integrand and the boundary term, substituting $S = \sqrt{\rho^2 + |\nabla \rho|^2}$, the explicit pointwise formula for the second-order functional derivative is:

\begin{align}{\label{eq:2-D}}
\delta^2 \mathcal{F}[\eta] = \int_{\mathbb{S}^2_+} \Bigg[ &\left( \sigma \frac{2\rho^3 + 3\rho |\nabla \rho|^2}{(\rho^2 + |\nabla \rho|^2)^{3/2}} + 3g \rho^2 \cos\theta - 2\lambda \rho \right) \eta^2 \nonumber \\
&+ 2\sigma \frac{|\nabla \rho|^2}{(\rho^2 + |\nabla \rho|^2)^{3/2}} (\nabla \rho \cdot \nabla \eta) \eta \nonumber \\
&+ \sigma \rho \left( \frac{|\nabla \eta|^2}{\sqrt{\rho^2 + |\nabla \rho|^2}} - \frac{(\nabla \rho \cdot \nabla \eta)^2}{(\rho^2 + |\nabla \rho|^2)^{3/2}} \right) \Bigg] d\omega \nonumber \\
&- \sigma \cos\gamma_e \int_0^{2\pi} \eta(\pi/2, \phi)^2 \, d\phi
\end{align}
This complete bilinear form rigorously governs the stability of the droplet's radial profile under the prescribed volume constraint.

By the stability result, we have
\begin{align}
    \delta^2 \mathcal{F}[\eta]\geq 0
\end{align}
for any $\eta$ such that $\int_{\mathbb{S}_{+}^{2}}\rho^{2} \eta~d\omega=0.$ We next aim to derive the kernel.

\section{From the second derivative to the equation for kernel functions}
Using integration by parts, we introduce the equation satisfied by any kernel function. First, we define the bilinear form $B(\eta,\psi)$ based on the second derivative of the energy functional $\mathcal{F}$
\begin{align}
    B(\eta,\psi)=\delta^{2}\mathcal{F}(\eta,\psi):=  \int_{\mathbb{S}^2_+} \Bigg[ &\left( \sigma \frac{2\rho^3 + 3\rho |\nabla \rho|^2}{(\rho^2 + |\nabla \rho|^2)^{3/2}} + 3g \rho^2 \cos\theta - 2\lambda \rho \right) \eta\psi \nonumber \\
&+ \sigma \frac{|\nabla \rho|^2}{(\rho^2 + |\nabla \rho|^2)^{3/2}} (\nabla \rho \cdot \nabla (\eta\psi))  \nonumber \\
&+ \sigma \rho \left( \frac{\nabla \eta\cdot \nabla\psi}{\sqrt{\rho^2 + |\nabla \rho|^2}} - \frac{(\nabla \rho \cdot \nabla \eta)(\nabla \rho \cdot \nabla \psi)}{(\rho^2 + |\nabla \rho|^2)^{3/2}} \right) \Bigg] d\omega \nonumber \\
&- \sigma \cos\gamma_e \int_0^{2\pi} \eta(\pi/2, \phi)\psi(\pi/2,\phi) \, d\phi.
\end{align}
Moreover, the detailed representation of $\nabla_{\mathbb{S}^{2}}$  with respect to the orthonormal frame is defined as follows
\begin{equation}\label{eq:spherical_gradient_orthonormal}
\nabla_{\mathbb S^2} f
=
(\partial_\theta f)e_\theta
+
\frac{1}{\sin\theta}(\partial_\phi f)e_\phi .
\end{equation}
where $e_{\theta}$ and $e_{\phi}$ are two tangent vectors on $S^{2}$ defined as
\[
e_\theta=\partial_\theta,
\qquad
e_\phi=\frac{1}{\sin\theta}\partial_\phi.
\]

We now introduce the key lemma for deriving the equation.

\begin{lemma}{\label{lem:ODE}}
    Suppose $\eta\in H^{1}(\mathbb{S}_{+}^{2})$ such that $\delta^{2}\mathcal{F}(\eta)=0$ and that
    \begin{align}
         \int_{\mathbb{S}_{+}^{2}}\rho^{2} \eta~d\omega=0.
    \end{align}
    Then for any $\psi$ such that $\int_{\mathbb{S}_{+}^{2}}\rho^{2} \psi~d\omega=0,$ we have
    \begin{align*}
        B(\eta,\psi)=0
    \end{align*}
\end{lemma}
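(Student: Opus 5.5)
The idea is the standard one: a minimizer of a nonnegative quadratic form is a critical point. Let $\mathcal{V}:=\{\psi\in H^1(\mathbb S^2_+):\int_{\mathbb S^2_+}\rho^2\psi\,d\omega=0\}$. This is a linear subspace, since the constraint is linear in $\psi$. By the stability statement recorded at the end of Section 2, $\delta^2\mathcal F[\psi]=B(\psi,\psi)\ge 0$ for every $\psi\in\mathcal V$. We will check that $B$ is symmetric and bilinear, with $B(\eta,\eta)=\delta^2\mathcal F[\eta]$. Symmetry is visible from the formula. The only term needing a remark is the mixed term: the integrand $2\sigma\frac{|\nabla\rho|^2}{S^3}(\nabla\rho\cdot\nabla\eta)\eta$ in \eqref{eq:2-D} equals $\sigma\frac{|\nabla\rho|^2}{S^3}\nabla\rho\cdot\nabla(\eta^2)$, which is exactly the diagonal of the symmetrized term in $B$. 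Because $\rho=\rho_0$ is smooth, with $\rho_0>0$ on the closed hemisphere and $S_0\ge\rho_0>0$, all coefficients are bounded. The trace theorem controls the boundary term. Hence $B$ is a bounded symmetric bilinear form on $H^1(\mathbb S^2_+)$.

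Fix $\psi\in\mathcal V$ and $t\in\mathbb R$. Since $\eta\in\mathcal V$ as well, $\eta+t\psi\in\mathcal V$, and so
\begin{align*}
0\le B(\eta+t\psi,\eta+t\psi)=B(\eta,\eta)+2t\,B(\eta,\psi)+t^2B(\psi,\psi)=2t\,B(\eta,\psi)+t^2B(\psi,\psi),
\end{align*}
where we used $B(\eta,\eta)=\delta^2\mathcal F(\eta)=0$. Dividing by $t>0$ and letting $t\to0^+$ gives $B(\eta,\psi)\ge0$. Doing the same with $t<0$ gives $B(\eta,\psi)\le0$. Therefore $B(\eta,\psi)=0$. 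Equivalently, this is the Cauchy--Schwarz inequality for the positive semidefinite form $B|_{\mathcal V}$: $|B(\eta,\psi)|^2\le B(\eta,\eta)B(\psi,\psi)=0$.

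There is no real obstacle here. The one point to verify is that the nonnegativity $\delta^2\mathcal F\ge0$ quoted from the stability of the minimizer holds on the whole space $\mathcal V\subset H^1$, not only on smooth perturbations. If it is known only for smooth $\psi$, we extend it by density. Smooth functions satisfying the linear volume constraint are dense in $\mathcal V$: approximate $\psi$ by smooth $\psi_k$, then subtract $c_k\rho_0$ with $c_k\to0$ to restore the constraint. Since $B$ is continuous on $H^1$, the inequality passes to the limit.
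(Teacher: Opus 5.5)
Your proof is correct. The paper gives no argument of its own and only defers to Theorem 5.3 of \cite{Yang2026GlobalDynamic}, and your approach is the standard one behind that citation: expand $B(\eta+t\psi,\eta+t\psi)\ge 0$ on the linear subspace $\{\int_{\mathbb S^2_+}\rho_0^2\psi\,d\omega=0\}$, use $B(\eta,\eta)=\delta^2\mathcal F(\eta)=0$, and let $t\to0^{\pm}$. Your extra checks (the diagonal of $B$ matching the mixed term, boundedness of $B$ on $H^1$ via the trace theorem, and the density argument extending nonnegativity to all of $H^1$) make it self-contained.
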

\begin{proof}
    The proof of this lemma follows directly from the proof of Theorem 5.3 in \cite{Yang2026GlobalDynamic}.
\end{proof}

We now state the main theorem.

\begin{theorem}
    Suppose that \(\eta\) satisfies the assumptions of Lemma~\ref{lem:ODE}. Then there exists a constant \(c\in\mathbb R\) such that \(\eta\) satisfies the following equation, together with the corresponding boundary condition.

    \begin{align}{\label{eq:kernel_pde}}
    -\frac{\sigma}{\sin\theta} \frac{\partial}{\partial \theta} \left( \sin\theta \frac{\rho_0^3}{S_0^3} \frac{\partial \eta}{\partial \theta} \right) - \frac{\sigma \rho_0}{S_0 \sin^2\theta} \frac{\partial^2 \eta}{\partial \phi^2} + Q(\theta)\eta = c \rho_0^2,
\end{align}
\begin{align}{\label{eq:Q}}
Q(\theta) = 3g \rho_0^2 \cos\theta - 2\lambda \rho_0 + \sigma \left[ \frac{\rho_0(2\rho_0^2 + 3(\rho_0')^2)}{S_0^3} - \frac{1}{\sin\theta} \frac{d}{d\theta} \left( \sin\theta \frac{(\rho_0')^3}{S_0^3} \right) \right],
\end{align}
\begin{align}\label{eq:kernel_bdd}
    \frac{\rho_0^3}{S_0^3} \frac{\partial \eta}{\partial \theta} + \left( \frac{(\rho_0')^3}{S_0^3} - \cos\gamma_e \right) \eta = 0 \quad \text{at } \theta = \pi/2.
\end{align}
\end{theorem}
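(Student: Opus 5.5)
Starting from Lemma~\ref{lem:ODE}, we know that $B(\eta,\psi)=0$ for every $\psi$ with $\int_{\mathbb S^2_+}\rho_0^2\psi\,d\omega=0$. The plan is to turn this orthogonality statement into a weak Euler--Lagrange equation with a Lagrange multiplier, and then to read off the interior equation and the natural boundary condition by integrating by parts.

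\textbf{Step 1: the multiplier $c$.} Fix a smooth $\psi_0$ with $\int\rho_0^2\psi_0\,d\omega=1$. For an arbitrary $\psi\in H^1(\mathbb S^2_+)$, the function $\psi-\big(\int\rho_0^2\psi\,d\omega\big)\psi_0$ is admissible. Hence
\[
B(\eta,\psi)=c\int_{\mathbb S^2_+}\rho_0^2\psi\,d\omega \quad\text{for all }\psi\in H^1(\mathbb S^2_+),
\]
where $c:=B(\eta,\psi_0)$.

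\textbf{Step 2: specialize to the axisymmetric profile.} Because $\rho_0=\rho_0(\theta)$, we have $\nabla\rho_0=\rho_0' e_\theta$ and $|\nabla\rho_0|^2=(\rho_0')^2$. The coefficients in $B$ then simplify as follows:
\begin{itemize}
\item the gradient part becomes $\sigma\rho_0\big(|\nabla\eta|^2/S_0-(\rho_0')^2(\partial_\theta\eta)^2/S_0^3\big)$, so the $\theta\theta$-coefficient is $\sigma\rho_0(S_0^2-(\rho_0')^2)/S_0^3=\sigma\rho_0^3/S_0^3$;
\item the $\phi\phi$-coefficient is $\sigma\rho_0/(S_0\sin^2\theta)$;
\item the mixed term becomes $\sigma\frac{(\rho_0')^3}{S_0^3}\partial_\theta(\eta\psi)$.
\end{itemize}

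\textbf{Step 3: integrate by parts against $\psi\in C^\infty$.} We use $d\omega=\sin\theta\,d\theta\,d\phi$ and work first with $\psi$ compactly supported in the interior. The $\phi$-derivative term integrates by parts periodically. The $\theta$-derivative term produces the divergence-form operator in \eqref{eq:kernel_pde}. The mixed term $\int \sigma\frac{(\rho_0')^3}{S_0^3}\partial_\theta(\eta\psi)\sin\theta\,d\theta\,d\phi$ is moved onto the coefficient, which yields $-\frac{\sigma}{\sin\theta}\frac{d}{d\theta}\big(\sin\theta\frac{(\rho_0')^3}{S_0^3}\big)\eta\psi$. Collecting zeroth-order terms gives exactly $Q$ in \eqref{eq:Q}. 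This establishes \eqref{eq:kernel_pde} in the weak sense on the open hemisphere. Standard elliptic regularity applies, since $\rho_0$ is smooth and positive and the operator is uniformly elliptic away from the pole. It upgrades $\eta$ to a classical solution. At the pole $\theta=0$ the spherical-coordinate singularity is only apparent, and the weak formulation on $\mathbb S^2_+$ handles it.

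\textbf{Step 4: the boundary condition.} Next we take general $\psi$ that do not vanish at $\theta=\pi/2$, and subtract the interior equation. The boundary contributions at the equator, where $\sin\theta=1$, are:
\begin{itemize}
\item $\sigma\frac{\rho_0^3}{S_0^3}\partial_\theta\eta\,\psi$ from the divergence term;
\item $\sigma\frac{(\rho_0')^3}{S_0^3}\eta\psi$ from the mixed term;
\item $-\sigma\cos\gamma_e\eta\psi$ from the contact-line term.
\end{itemize}
Their sum, integrated in $\phi$, must vanish for every trace $\psi(\pi/2,\cdot)$. This gives \eqref{eq:kernel_bdd}. At the pole, the factor $\sin\theta\to0$ together with the boundedness of $\eta$ kills any boundary term.

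\textbf{Main obstacle.} The main obstacle is justifying Step 4 rigorously for $\eta$ that is only $H^1$: the boundary identity requires a trace of $\partial_\theta\eta$. We would first prove regularity up to $\theta=\pi/2$, using the conormal (Robin-type) weak formulation, which gives $H^2$ near the equator by standard oblique/Neumann elliptic theory. After that, the integration by parts is legitimate. A secondary point is checking that the sign conventions in the mixed term match \eqref{eq:Q}.
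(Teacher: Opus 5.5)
Your proposal is correct and takes essentially the same route as the paper: you specialize $B(\eta,\psi)$ to the axisymmetric profile, integrate the mixed term and the $\theta\theta$ term by parts in $\theta$ and the $\phi\phi$ term periodically in $\phi$, collect the zeroth-order coefficients into $Q$ and the equatorial boundary contributions into the Robin-type condition, and use the arbitrariness of volume-preserving test functions. Your Step 1 (obtaining $c=B(\eta,\psi_0)$ by testing with $\psi-\bigl(\int\rho_0^2\psi\,d\omega\bigr)\psi_0$) and your remarks on regularity up to $\theta=\pi/2$ and on the vanishing of the pole contribution make explicit what the paper leaves implicit.
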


\begin{proof}

By Lemma \ref{lem:ODE}, $\eta$ vanishes the bilinear form $B$ for any $\psi$ such that $\int_{\mathbb{S}_{+}^{2}}\rho^{2} \psi ~d\omega=0$.
Using integration by parts, we then derive the equation for the kernel functions based on this vanishing property. For the first integral, we just keep it.

For the second integral on the right-hand side of \eqref{eq:2-D}, we integrate by parts in $\theta$. Since the equilibrium profile $\rho_0$ is independent of the azimuthal variable $\phi$, we obtain
\begin{align}
&\sigma \int_0^{2\pi} \int_0^{\pi/2}
\frac{(\rho_0')^3}{S_0^3}
\frac{\partial}{\partial\theta}(\eta\psi)
\sin\theta d\theta d\phi =
\sigma \int_0^{2\pi}
\left[
\frac{(\rho_0')^3}{S_0^3}
\eta\psi \sin\theta
\right]_{\theta=0}^{\theta=\pi/2}
d\phi-
\sigma \int_{\mathbb S^2_+}
\frac{1}{\sin\theta}
\frac{\partial}{\partial\theta}
\left(
\sin\theta \frac{(\rho_0')^3}{S_0^3}
\right)
\eta\psi d\omega .
\end{align}

We next treat the third term on the right-hand side of \eqref{eq:2-D}. Noting that this term can be rewritten as
\begin{align}
\sigma \int_{\mathbb S^2_+}
\left(
\frac{\rho_0^3}{S_0^3}
\partial_\theta \eta\partial_\theta \psi
+
\frac{\rho_0}{S_0\sin^2\theta}
\partial_\phi \eta\partial_\phi \psi
\right)
d\omega .
\end{align}
Splitting the spherical gradient into its $\theta$- and $\phi$-components, we first integrate by parts in $\theta$:
\begin{align}
&\sigma \int_0^{2\pi} \int_0^{\pi/2}
\frac{\rho_0^3}{S_0^3}
\frac{\partial\eta}{\partial\theta}
\frac{\partial\psi}{\partial\theta}
\sin\theta d\theta d\phi =
\sigma \int_0^{2\pi}
\left[
\frac{\rho_0^3}{S_0^3}
\frac{\partial\eta}{\partial\theta}
\psi \sin\theta
\right]_{\theta=0}^{\theta=\pi/2}
d\phi-
\sigma \int_{\mathbb S^2_+}
\frac{1}{\sin\theta}
\frac{\partial}{\partial\theta}
\left(
\sin\theta \frac{\rho_0^3}{S_0^3}
\frac{\partial\eta}{\partial\theta}
\right)
\psi  d\omega .
\end{align}
For the azimuthal derivative, using periodicity in $\phi$, we have
\begin{align}
&\sigma \int_0^{\pi/2} \int_0^{2\pi}
\frac{\rho_0}{S_0\sin^2\theta}
\frac{\partial\eta}{\partial\phi}
\frac{\partial\psi}{\partial\phi}
\sin\theta d\phi  d\theta =-
\sigma \int_{\mathbb S^2_+}
\frac{\rho_0}{S_0\sin^2\theta}
\frac{\partial^2\eta}{\partial\phi^2}
\psi d\omega .
\end{align}

Combining the computations above, and using the fact that \(\psi\) is arbitrary among admissible test functions satisfying the linearized volume constraint
\[
\int_{\mathbb S^2_+}\rho_0^2\psi d\omega=0,
\]
we conclude that the kernel function \(\eta\) satisfies the PDE \eqref{eq:kernel_pde} together with the boundary condition \eqref{eq:kernel_bdd}, for some constant \(c\in\mathbb R\).

\end{proof}

We note that, although the coefficients in the equation \eqref{eq:kernel_pde} and the boundary condition \eqref{eq:kernel_bdd} are complicated, they depend only on the polar variable \(\theta\). Therefore, we may decompose \(\eta\) into Fourier modes in the azimuthal variable \(\phi\):
\[
\eta(\theta,\phi)
=
\sum_{m\in\mathbb Z} u_m(\theta)e^{im\phi}.
\]
Equivalently, for real-valued perturbations, one may write
\[
\eta(\theta,\phi)
=
u_0(\theta)
+
\sum_{m=1}^{\infty}
\left(
a_m(\theta)\cos(m\phi)
+
b_m(\theta)\sin(m\phi)
\right).
\]
After this Fourier decomposition, the PDE reduces, mode by mode, to a family of ordinary differential equations in \(\theta\). This reduction allows us to analyze the kernel by studying each Fourier mode separately and identifying the special solutions associated with each mode.

\section{Fourier mode $m=0$}
In this section, we consider the perturbation mode when $m=0$ (Fourier mode). Substituting $\eta(\theta,\phi)=u_{0}(\theta)$ into the equation \eqref{eq:kernel_pde} and boundary condition \eqref{eq:kernel_bdd}, we obtain the following equation for $u_{0}$
\begin{align}{\label{eq:m_0}}
-\frac{\sigma}{\sin\theta} \frac{d}{d\theta} \left( \sin\theta \frac{\rho_0^3}{S_0^3} u_0' \right) + Q(\theta)u_0 = c \rho_0^2
\end{align}
subject to the volume constraint and the boundary condition
\begin{align}
    \int_0^{\pi/2} \rho_0^2 u_0 \sin\theta \, d\theta = 0,
\end{align}
\begin{align}
    \frac{\rho_0^3}{S_0^3} u_0'(\pi/2) + \left( \frac{(\rho_0')^3}{S_0^3} - \cos\gamma_e \right) u_0(\pi/2) = 0,
\end{align}

We denote by \(L_0\) the linear operator appearing on the left-hand side of
\eqref{eq:m_0}. The leading term in \eqref{eq:m_0} has a favorable divergence
structure. However, because the coefficient \(Q(\theta)\) has a rather
complicated form, a direct analysis of this equation is difficult. We therefore
first relate \(L_0\) to a geometric Jacobi operator, which is the natural
linearized operator arising in the study of capillary surfaces and minimal
surfaces.
\begin{align}{\label{eq:Jacobi_equation}}
    \mathcal{J} u = -\sigma \Delta_s u - \sigma \|B\|^2 u + g (\hat{k} \cdot \hat{n}) u
\end{align}

In equation \eqref{eq:Jacobi_equation}, $\hat{n}$ is the unit normal vector and $\hat{k}$ is the unit vector in the  $z$-direction. $\Delta_{S}$ is the Laplace-Beltrami operator (surface Laplacian). $\|B\|^{2}=k_{1}^{2}+k_{2}^{2}$ is the sum of the squares of the principal curvatures. $\frac{\p P}{\p n}=\hat{n}\cdot \nabla(\lambda-gz)=-g(\hat{n}\cdot \hat{k})$ is the normal derivative of the external pressure field. 

Compared with the radial operator \(L_0\), this geometric Jacobi operator has a more transparent structure, making it easier to construct special homogeneous and inhomogeneous solutions. We now establish the relation between \(L_0\) and \(\mathcal J\).

\begin{theorem}{\label{thm:J_e}}
$L_{0}$ is the radial operator defined above and $J$ is the geometric Jacobi operator. For any sufficiently smooth function $u_{0}(\theta)$, it holds that
    \begin{align}
    L_{0}u_{0}=\rho_{0}^{2}\mathcal{J}(\frac{\rho_{0}}{S}u_{0})
\end{align}
\end{theorem}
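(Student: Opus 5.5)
We would prove the identity by comparing second variations instead of pushing the explicit coefficient $Q(\theta)$ through directly. The principle is classical: for a radial graph $\rho\mapsto X_\rho=\rho\,\omega$, the energy $\mathcal F^{\lambda}$ is, up to the boundary term, the geometric functional $\sigma|\Sigma|+\int g z - \lambda\,\mathrm{Vol}$ written in spherical coordinates. Its first variation at any $\rho$ equals $\rho^2\big(\sigma H+g z-\lambda\big)\circ X_\rho$ times the normal-speed factor. So we organize the proof in three steps.

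\textbf{Step 1 (first variation in geometric form).} For a perturbation $\rho+\epsilon\eta$, the radial displacement is $\eta\,\omega$. Its normal component is $\eta\,(\omega\cdot\nu)=\eta\,\rho/S$, since $\nu=(\rho\,\omega-\nabla\rho)/S$. The area element is $dA=\rho S\,d\omega$, so the standard formula $\delta(\text{energy})=\int_\Sigma(\sigma H+gz-\lambda)\,(\text{normal speed})\,dA$ gives the Euler--Lagrange operator
\begin{align*}
\mathcal E(\rho):=\mathcal L_\rho-\operatorname{div}_{\mathbb S^2}\mathcal L_{\mathbf p}=\rho^2\big(\sigma H_\rho+g\rho\cos\theta-\lambda\big).
\end{align*}
We would verify this identity directly from the formulas for $\mathcal L_\rho,\mathcal L_{\mathbf p}$ in Section 2, using the standard spherical-coordinate expression for $H$ of a radial graph. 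The sign convention for $H$ must match \eqref{equ:1.2.1}.

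\textbf{Step 2 (linearize).} The operator $L_0$ (more generally the left side of \eqref{eq:kernel_pde}) is exactly the linearization of $\mathcal E$ at $\rho_0$, because it came from integrating by parts the Hessian of $\mathcal L$. Differentiating the identity of Step 1 at $\rho_0$ in direction $u_0$ gives
\begin{align*}
L_0u_0=2\rho_0u_0\big(\sigma H_0+g\rho_0\cos\theta-\lambda\big)+\rho_0^2\,\frac{d}{d\epsilon}\Big|_{\epsilon=0}\big(\sigma H_{\rho_0+\epsilon u_0}+g(\rho_0+\epsilon u_0)\cos\theta\big).
\end{align*}
The first term vanishes by the equilibrium equation \eqref{equ:1.2.1}.

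\textbf{Step 3 (invoke the geometric linearization).} For a variation of $\Sigma_0$ with normal speed $w=\rho_0u_0/S_0$ and some tangential part, we use three standard facts. The derivative of $H$, evaluated at the same parameter point $\omega$, is $-\Delta_\Sigma w-|A|^2w$ plus a tangential term $T\cdot\nabla_\Sigma H_0$. The derivative of $gz$ is $g(\hat k\cdot\nu)w+g\,T\cdot\nabla_\Sigma z$. Together this yields $\mathcal J w$ plus $T\cdot\nabla_\Sigma(\sigma H_0+gz)$. The last term vanishes because $\sigma H_0+gz\equiv\lambda$ is constant on $\Sigma_0$. This gives $L_0u_0=\rho_0^2\mathcal J(\rho_0u_0/S_0)$.

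The main obstacle is the bookkeeping of conventions: sign and normalization of $H$ (whether it is the sum of the principal curvatures, and in which orientation), and confirming that $\mathcal E$ carries exactly the factor $\rho^2$ (from $dA\cdot(\omega\cdot\nu)=\rho^2 d\omega$). If this abstract route runs into convention mismatches, the fallback is direct verification for axisymmetric $u_0$. Write $\Sigma_0$ as a surface of revolution with arclength coordinate, express $\Delta_\Sigma$ and $|A|^2=k_1^2+k_2^2$ through $\rho_0,\rho_0',\rho_0''$, expand $\rho_0^2\mathcal J(\rho_0u_0/S_0)$, and match the coefficients of $u_0''$, $u_0'$ and $u_0$ against $L_0$ and \eqref{eq:Q}. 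Matching the $u_0$ coefficient requires using the ODE \eqref{equ:1.2.1} to eliminate $\rho_0''$.
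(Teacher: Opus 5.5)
Your argument is correct, but it follows a genuinely different route from the paper's.

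\textbf{The paper's route.} The paper works by direct computation. It writes \(\mathcal J\) explicitly in spherical coordinates and conjugates by \(a=\rho_0/S_0\) to get \(r^2\mathcal J(au_0)=-\frac{\sigma}{\sin\theta}(\sin\theta\,a^3u_0')'+\widetilde Q\,u_0\). It then shows, by a long algebraic expansion (Lemma~\ref{lem:equi}), that \(Q-\widetilde Q=2r\mathscr E+\frac{r^2r'}{S^2}\mathscr E'\), which vanishes on the equilibrium.

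\textbf{Your route.} You use the variational structure instead. First, \(L_0\) is the linearization of the Euler--Lagrange operator \(\mathcal L_\rho-\operatorname{div}\mathcal L_{\mathbf p}\). Indeed, \(\mathcal L_{\rho\rho}-\operatorname{div}\mathcal L_{\rho\mathbf p}\) is exactly \(Q\), and \(\mathcal L_{\mathbf p\mathbf p}\) gives the principal part. Second, this operator equals \(\rho^2(\sigma H+g\rho\cos\theta-\lambda)\) identically in \(\rho\). A direct check confirms both the factor \(\rho^2\) and the sign convention: \(\sigma(S+\rho^2/S)-\frac{\sigma}{\sin\theta}(\sin\theta\,\rho\rho'/S)'=\sigma\rho^2H\), with the paper's \(H\).

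\textbf{What your route buys.}
\begin{enumerate}
\item It explains the paper's lemma. If you do not impose equilibrium in Steps 2--3, differentiating the factor \(\rho^2\) produces \(2\rho_0\mathscr E\,u_0\). The tangential term \(\rho_0^2\,W^\top\cdot\nabla_{\Sigma}(\sigma H+gz)=\frac{\rho_0^2\rho_0'}{S_0^2}u_0\,\mathscr E'\) produces the other term. You thus recover \(Q-\widetilde Q\) with no algebra.
\item The same argument works unchanged for \(\phi\)-dependent perturbations. It identifies the full operator in \eqref{eq:kernel_pde} with \(\rho_0^2\mathcal J(\rho_0\,\cdot\,/S_0)\), which the paper later needs for \(L_1\) and \(L_m\) but only asserts.
\end{enumerate}

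\textbf{The cost.} You rely on the standard first-variation formula and on the variation formula for \(H\) with a tangential component. The paper itself invokes the latter in Step 2 of Theorem~\ref{thm:lambda_variation_mode}, so nothing new is assumed.

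The paper's computation is self-contained but opaque about where the two residual terms come from. Your fallback coefficient-matching is not needed.
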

\begin{proof}
For simplicity, in the proof of this theorem, we denote
\[
r=\rho_0(\theta),\qquad 
S=S_0=\sqrt{r^2+(r')^2},\qquad 
a=\frac{r}{S}.
\]
We also write
\[
\tilde{u}=au_{0}=\frac{\rho_0}{S_0}u_{0} .
\]

From its definition, in spherical coordinates, for any sufficiently smooth function $u(\theta,\phi)$, the Jacobi-type operator can be explicitly written as
\[\
\mathcal J u
=
-\frac{\sigma}{Sr\sin\theta}
\frac{\partial}{\partial\theta}
\left(
\frac{r\sin\theta}{S}
\frac{\partial u}{\partial\theta}
\right)
-\frac{\sigma}{r^2\sin^2\theta}
\frac{\partial^2u}{\partial\phi^2}
-\sigma \|B\|^2u
+g\nu_3u,
\]
where $\|B\|^{2}$ and $\nu_{3}$ are defined by
\[
\|B\|^2
=
\left(
\frac{r^2+2(r')^2-rr''}{S^3}
\right)^2
+
\left(
\frac{r\sin\theta-r'\cos\theta}{rS\sin\theta}
\right)^2,
\]
\[
\nu_3=
\frac{r\cos\theta+r'\sin\theta}{S}.
\]

Using the fact that $a$ and $u_{0}$ are independent of $\phi$, we compute
\[
r^2\mathcal J(au_0)
=
-\frac{\sigma r}{S\sin\theta}
\frac{\partial}{\partial\theta}
\left(
\frac{r\sin\theta}{S}
\frac{\partial (au_0)}{\partial\theta}
\right)
-\sigma r^2a\|B\|^2u_0
+gr^2a\nu_3u_0 .
\]
Since \(a=r/S\), this becomes
\begin{align}{\label{eq:J_1}}
r^2\mathcal J(au_0)
=
-\frac{\sigma a}{\sin\theta}
\frac{\partial}{\partial\theta}
\left(
a\sin\theta
\frac{\partial(au_0)}{\partial\theta}
\right)
-\sigma r^2a\|B\|^2u_0
+gr^2a\nu_3u_0 .
\end{align}

We now decompose the derivative of $au_0$ with respect to $\theta$
\[
\frac{\partial(au_0)}{\partial\theta}
=
a'u_0+au_0',
\]
which implies that
\[
a\sin\theta \frac{\partial(au_0)}{\partial\theta}
=
aa'\sin\theta\,u_0
+a^2\sin\theta\,u_0'.
\]
Using the product rule, one obtains
\begin{align}{\label{eq:product}}
a\frac{\partial}{\partial\theta}
\left(
a\sin\theta \frac{\partial(au_0)}{\partial\theta}
\right)
=
\frac{\partial}{\partial\theta}
\left(
\sin\theta\,a^3
\frac{\partial u_0}{\partial\theta}
\right)
+
a\frac{\partial}{\partial\theta}
\left(
\sin\theta\,aa'
\right)u_0 .
\end{align}

Hence, applying equation \eqref{eq:product} to equation \eqref{eq:J_1}, we have
\[
\begin{aligned}
r^2\mathcal J(au_0)
={}&
-\frac{\sigma}{\sin\theta}
\frac{\partial}{\partial\theta}
\left(
\sin\theta\,a^3
\frac{\partial u_0}{\partial\theta}
\right)
-\frac{\sigma a}{\sin\theta}
\frac{\partial}{\partial\theta}
\left(
\sin\theta\,aa'
\right)u_0
-\sigma r^2a\|B\|^2u_0
+gr^2a\nu_3u_0 .
\end{aligned}
\]
Since $a^3=\frac{r^3}{S^3}$, and
$a=\frac{r}{S},$
the equation above yields
\begin{align}{\label{eq:J_2}}
\begin{aligned}
r^2\mathcal J(au_0)
={}&
-\frac{\sigma}{\sin\theta}
\frac{\partial}{\partial\theta}
\left(
\sin\theta\frac{r^3}{S^3}
\frac{\partial u_0}{\partial\theta}
\right)
+\widetilde Q(\theta)u_0,
\end{aligned}
\end{align}
with $\tilde{Q}(\theta)$ defined by
\begin{align}{\label{eq:t_Q}}
\boxed{
\widetilde Q(\theta)
=
-\frac{\sigma a}{\sin\theta}
\frac{d}{d\theta}
\left(
\sin\theta\,aa'
\right)
-\sigma r^2a\|B\|^2
+gr^2a\nu_3 .
}
\end{align}

On the other hand, the radial operator \(L_0\) is
\begin{align}{\label{eq:L_0_1}}
L_0u_0
=
-\frac{\sigma}{\sin\theta}
\frac{\partial}{\partial\theta}
\left(
\sin\theta\frac{r^3}{S^3}
\frac{\partial u_0}{\partial\theta}
\right)
+Q(\theta)u_0 ,
\end{align}
with $Q(\theta)$ given by
\[
Q(\theta)
=
3gr^2\cos\theta
-2\lambda r
+\sigma
\left[
\frac{r(2r^2+3(r')^2)}{S^3}
-
\frac{1}{\sin\theta}
\frac{d}{d\theta}
\left(
\sin\theta\frac{(r')^3}{S^3}
\right)
\right].
\]
Therefore, subtracting equation \eqref{eq:L_0_1} with equation \eqref{eq:J_2}, we obtain
\begin{align}{\label{eq:dif}}
L_0u_{0}-r^2\mathcal J(au_{0})
=
\bigl(Q(\theta)-\widetilde Q(\theta)\bigr)u_{0} .
\end{align}

Now define the Euler--Lagrange residual
\begin{align}{\label{eq:residual}}
\mathscr E(\theta)
=
\sigma
\left[
\frac{r^2+2(r')^2-rr''}{S^3}
+
\frac{r\sin\theta-r'\cos\theta}{rS\sin\theta}
\right]
+gr\cos\theta-\lambda .
\end{align}
Since the unit normal is
\[
\nu=\frac{r e_r-r'e_\theta}{S},
\]
we obtain the following representation for $\nu_{3}$
\[
\nu_3=\nu\cdot e_3
=
\frac{r\cos\theta+r'\sin\theta}{S}.
\]

Recall the definition of the mean curvature operator $H(\theta)$
\[
H(\theta)
=
\frac{r^2+2(r')^2-rr''}{S^3}
+
\frac{r\sin\theta-r'\cos\theta}{rS\sin\theta}.
\]
Then the Euler--Lagrange residual can be rewritten as
\[
\mathscr E(\theta)
=
\sigma H(\theta)+gr\cos\theta-\lambda.
\]

We claim that
\begin{align}{\label{eq:difference}}
\boxed{
Q(\theta)-\widetilde Q(\theta)
=
2r\,\mathscr E(\theta)
+
\frac{r^2r'}{S^2}\mathscr E'(\theta),
}
\end{align}
whose proof is deferred to Lemma~\ref{lem:equi}, following the main part of the proof of this theorem.

In particular, if \(r=\rho_{0}\) is the steady state, then it must satisfy the Euler-Lagrange equation. Equivalently,
\[
\mathscr E(\theta)=0, \qquad\operatorname{and}\qquad \mathscr E'(\theta)=0,
\]
Substituting these two relations into equation \eqref{eq:difference}, we have
\[
Q(\theta)=\widetilde Q(\theta).
\]
Applying this relation to equation \eqref{eq:dif}, we obtain the final result of this theorem
\[
\boxed{
L_{0}u_{0}
=
\rho_0^2\mathcal J
\left(
\frac{\rho_0}{S_0}u_{0}
\right).
}
\]
\end{proof}

\begin{lemma}{\label{lem:equi}}
    $Q(\theta)$ and $\tilde{Q}(\theta)$ are defined by \eqref{eq:Q} and \eqref{eq:t_Q} as in the previous Theorem. $\mathscr{E}$ is the Euler-Lagrange residual defined by equation \eqref{eq:residual}.  Then the following relation holds
    \[
Q(\theta)-\widetilde Q(\theta)
=
2r\,\mathscr E(\theta)
+
\frac{r^2r'}{S^2}\mathscr E'(\theta).
\]
\end{lemma}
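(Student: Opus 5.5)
The identity is purely algebraic and pointwise in $\theta$. It holds for an arbitrary smooth positive profile $r$, and nothing from the Euler--Lagrange equation is used. The plan is to expand both sides as explicit expressions in $r,r',r'',r'''$ and $\theta$ and compare them. Both sides are affine in the parameters $\lambda$ and $g$ and linear in $\sigma$, so I will split the comparison into three independent pieces: the $\lambda$-terms, the $g$-terms and the $\sigma$-terms. Each piece must match separately.

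\textbf{The $\lambda$- and $g$-terms.} The $\lambda$-terms are immediate. $\lambda$ appears only in $Q$, as $-2\lambda r$. On the right-hand side it appears only through $2r\mathscr E$, giving $-2\lambda r$, since $\mathscr E'$ does not contain $\lambda$.

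For the $g$-terms, $Q-\widetilde Q$ contributes
\[
3gr^2\cos\theta-\frac{gr^3(r\cos\theta+r'\sin\theta)}{S^2}
=gr^2\Big[2\cos\theta+\frac{(r')^2\cos\theta-rr'\sin\theta}{S^2}\Big].
\]
Here I used $a\nu_3=r(r\cos\theta+r'\sin\theta)/S^2$ and $S^2=r^2+(r')^2$. On the right-hand side, $2r\mathscr E$ contributes $2gr^2\cos\theta$. The term $\frac{r^2r'}{S^2}\mathscr E'$ contributes $\frac{r^2r'}{S^2}\,g(r'\cos\theta-r\sin\theta)$. These agree, which settles the gravity part.

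\textbf{The $\sigma$-terms.} This is the main obstacle, because third derivatives of $r$ appear on both sides. On the left, $\widetilde Q$ contains $-\frac{\sigma a}{\sin\theta}(\sin\theta\,aa')'$, and this involves $a''$ and hence $r'''$. On the right, $\mathscr E'$ contains $H'$ and hence $r'''$. My first step is to check that the $r'''$ coefficients agree. Write $a=r/S$, so that
\[
a'=\frac{(r')^3-rr'r''}{S^3}=\frac{r'\big(r^2+(r')^2-rr''\big)-r^2r'}{S^3}.
\]
The $r'''$ term of $a''$ is then $-rr'r'''/S^3$. Thus $-\sigma a^2a''$ contributes $\sigma r^3r'r'''/S^5$. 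On the right, $H$ contains $-rr''/S^3$, so $\frac{r^2r'}{S^2}\sigma H'$ contributes $-\sigma r^3r'r'''/S^5$ with a minus sign. Since this appears in $Q-\widetilde Q$ with $\widetilde Q$ subtracted, the two agree.

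After that I will reduce the bookkeeping by homogeneity. Every $\sigma$-term on both sides is homogeneous of degree $1$ under $r\mapsto tr$. So I set $p=r'/r$, which gives $a=(1+p^2)^{-1/2}$, $r''/r=p'+p^2$ and $S=r\sqrt{1+p^2}$, and divide both sides by $\sigma r$. Both sides then become rational functions of $p,p',p''$ with coefficients in $\cot\theta$. The $\sin\theta$ derivatives produce $\cot\theta$ terms from $Q$'s divergence term, from $\widetilde Q$'s $(\sin\theta\, aa')'$, and from $\mathscr E'$ acting on the second principal curvature $\frac{r\sin\theta-r'\cos\theta}{rS\sin\theta}=\frac{1-p\cot\theta}{\sqrt{1+p^2}}$.

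I will then match coefficients in three groups: the terms without $\cot\theta$ or $\csc^2\theta$, the $\cot\theta$-terms, and the $\csc^2\theta$-terms. I expect the last group to come only from differentiating $\cot\theta$ in $\mathscr E'$ and from $\|B\|^2$'s second curvature squared. $\|B\|^2=k_1^2+k_2^2$ must be expanded carefully, since it is the only genuinely quadratic curvature term. The cancellations should pair $-\sigma r^2a k_1^2$ against the $p'^2$ terms coming from $a''$ and from $Q$'s $\frac{d}{d\theta}\frac{(r')^3}{S^3}$. Likewise $-\sigma r^2 a k_2^2$ should pair against the $\cot\theta$ products. This last matching is routine but lengthy, and it is where I expect any sign slips.
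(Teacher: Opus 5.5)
Your route is the paper's. You split both sides into multiplier, gravity and surface-tension parts, check the first two by hand (your computations there coincide with the paper's), and verify the surface-tension part by direct expansion. The paper also records that expansion only in compressed form, as a multiple of $r^2-S^2+(r')^2\equiv0$. Your $r'''$ check is correct. You are also right that the identity holds for every positive profile $r$ without using the Euler--Lagrange equation.

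The reduction you propose for the $\sigma$-part, however, is mis-normalized, and as stated it would not close. The $\sigma$-terms are homogeneous of degree $0$, not $1$, under $r\mapsto tr$. With $w=\sqrt{1+p^2}$ one has $r(2r^2+3(r')^2)/S^3=(2+3p^2)/w^3$, and $a$, $r^2a\|B\|^2$, $rH$, $\frac{r^2r'}{S^2}H'$ are likewise scale invariant. So you must divide by $\sigma$, not by $\sigma r$. Correspondingly, it is $r\kappa_2$, not $\kappa_2$, that equals $(1-p\cot\theta)/w$ (and $r\kappa_1=(1+p^2-p')/w^3$).

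This factor is not cosmetic. Write $H=h/r$ with $h=rH=\frac{2-pc}{w}-\frac{p'}{w^3}$ and $c=\cot\theta$. Then $\frac{r^2r'}{S^2}H'=\frac{p}{w^2}(h'-p\,h)$, and the term $-p\,h$, which comes from differentiating $1/r$, is needed. Without it a discrepancy $-\frac{p^2}{w^2}h\not\equiv0$ remains.

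Make two further adjustments. First, write every trigonometric factor in the single variable $c$, using $c'=-(1+c^2)$. This matters because $\kappa_2^2$ produces $\cot^2\theta$ while $\mathscr E'$ produces $\csc^2\theta$, so your three groups match only after this rewriting. Second, with the normalization corrected, both sides reduce to
\[
\frac{4+3p^2-2pc-p^3c+p^2c^2}{w^3}-\frac{(2+3p^2+pc)\,p'}{w^5}-\frac{p\,p''}{w^5}+\frac{3p^2(p')^2}{w^7},
\]
which finishes the proof. Note also that $\frac{d}{d\theta}\frac{(r')^3}{S^3}=\frac{3p^2p'}{w^5}$ is linear in $p'$. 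The $(p')^2$ terms therefore come only from $aa'^2$, $a^2a''$, $r^2a\kappa_1^2$ and $H'$, not from $Q$ as you predicted.

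Alternatively, you can bypass the algebra. The identity $\mathcal L_\rho-\operatorname{div}\mathcal L_{\mathbf p}=\rho^2\mathscr E(\rho)$ holds for every $\rho$, and $L_0$ is its linearization at any $\rho$. Identity \eqref{eq:variation_0}, from Step 2 of the proof of Theorem~\ref{thm:lambda_variation_mode}, does not use equilibrium. Together these give $L_0\eta=2\rho\mathscr E\eta+\rho^2\mathcal J(\frac{\rho}{S}\eta)+\frac{\rho^2\rho'}{S^2}\mathscr E'\eta$. Comparing zeroth-order coefficients with \eqref{eq:J_2} is exactly the lemma.
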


\begin{proof}
    For simplicity, we introduce the notation
\[
D:=\frac{r^2r'}{S^2}.
\]
Then using the definition of $\mathscr E$, we have
\begin{align}{\label{eq:r}}
2r\mathscr E+D\mathscr E'
=
\sigma(2rH+DH')
+
g\left[
2r^2\cos\theta
+
D(r'\cos\theta-r\sin\theta)
\right]
-2r\lambda.
\end{align}
We then compare these three terms with terms included in $Q(\theta)-\widetilde Q(\theta)$ individually.

We first compare the gravitational terms. Since
\[
a=\frac{r}{S}
\qquad\text{and}\qquad
\nu_3=\frac{r\cos\theta+r'\sin\theta}{S},
\]
we have
\[
gr^2a\nu_3
=
g r^2\frac{r}{S}
\frac{r\cos\theta+r'\sin\theta}{S}
=
g\frac{r^3(r\cos\theta+r'\sin\theta)}{S^2}.
\]
Therefore, by the definition of $Q(\theta)-\tilde{Q}(\theta)$ obtained from subtracting equation \eqref{eq:Q} with equation \eqref{eq:t_Q}, the gravitational term can be rewritten as 
\[
\begin{aligned}
3gr^2\cos\theta-gr^2a\nu_3
&=
g\left[
3r^2\cos\theta
-
\frac{r^3(r\cos\theta+r'\sin\theta)}{S^2}
\right]
\\
&=
g\left[
2r^2\cos\theta
+
\frac{r^2r'}{S^2}
(r'\cos\theta-r\sin\theta)
\right].
\end{aligned}
\]
Hence, after including the term with Lagrange multiplier, the equation above yields
\[
3gr^2\cos\theta-2\lambda r-gr^2a\nu_3
=
g\left[
2r^2\cos\theta
+
D(r'\cos\theta-r\sin\theta)
\right]
-2r\lambda.
\]
This coincides with the gravitational and Lagrange multiplier part on the right-hand side of equation \eqref{eq:r}.

It remains to compare the surface-tension terms. Noting that the surface-tension part of \(Q-\widetilde Q\) is
\begin{align}{\label{eq:diff_sigma}}
\begin{aligned}
(Q-\widetilde Q)_\sigma
=
\sigma
\bigg[
&
\frac{r(2r^2+3(r')^2)}{S^3}
-
\frac{1}{\sin\theta}
\frac{d}{d\theta}
\left(
\sin\theta\frac{(r')^3}{S^3}
\right)+
\frac{a}{\sin\theta}
\frac{d}{d\theta}
\left(
\sin\theta\,aa'
\right)
+
r^2a\|B\|^2
\bigg].
\end{aligned}
\end{align}
It suffices to show that the expression in brackets equals
\[
2rH+\frac{r^2r'}{S^2}H'.
\] For simplicity, we introduce the following two notations
\[
N:=r^2+2(r')^2-rr'',
\qquad
M:=r\sin\theta-r'\cos\theta.
\]
Then, the mean curvature operator $H(\theta)$ can be rewritten by
\begin{align}{\label{eq:H}}
H=\frac{N}{S^3}+\frac{M}{rS\sin\theta}.
\end{align}
Moreover, the squared norm of the second fundamental form can be written as
\[
\|B\|^2
=
\left(
\frac{N}{S^3}
\right)^2
+
\left(
\frac{M}{rS\sin\theta}
\right)^2.
\]

Since $a=\frac{r}{S},$
we compute its $\theta$-derivative as follows
\[
a'
=
\left(\frac{r}{S}\right)'
=
\frac{r'\bigl((r')^2-rr''\bigr)}{S^3}.
\]
Multiplying the equation above by $a$, we have
\[
aa'
=
\frac{rr'\bigl((r')^2-rr''\bigr)}{S^4}.
\]

 We now compute the derivatives of $S,N$, and $M$
\begin{align}{\label{eq:derivative_th}}
    S'=&\frac{r'(r+r'')}{S},\qquad
N'
=
2rr'+3r'r''-rr''',\quad \operatorname{and}\quad 
M'
=
2r'\sin\theta+(r-r'')\cos\theta.
\end{align}
From equation \eqref{eq:H}, the derivative of mean curvature $H$ can be written as
\[
\begin{aligned}
H'
=&\;
\frac{N'}{S^3}
-
3\frac{NS'}{S^4}
+
\frac{M'}{rS\sin\theta}
-
\frac{M}{rS\sin\theta}
\left(
\frac{r'}{r}
+
\frac{S'}{S}
+
\cot\theta
\right).
\end{aligned}
\]
Applying equation \eqref{eq:derivative_th} to the representation of $H'$ given above, we obtain
\begin{align}{\label{eq:H_1}}
\begin{aligned}
H'
=&\;
\frac{2rr'+3r'r''-rr'''}{S^3}
-
\frac{
3\bigl(r^2+2(r')^2-rr''\bigr)r'(r+r'')
}{S^5}
\\
&+
\frac{2r'\sin\theta+(r-r'')\cos\theta}{rS\sin\theta}-
\frac{r\sin\theta-r'\cos\theta}{rS\sin\theta}
\left(
\frac{r'}{r}
+
\frac{r'(r+r'')}{S^2}
+
\cot\theta
\right).
\end{aligned}
\end{align}

Multiplying equation \eqref{eq:H_1} by \(r^2r'/S^2\), we obtain
\[
\begin{aligned}
\frac{r^2r'}{S^2}H'
=&\;
\frac{r^2r'(2rr'+3r'r''-rr''')}{S^5}-
\frac{
3r^2(r')^2(r+r'')\bigl(r^2+2(r')^2-rr''\bigr)
}{S^7}
\\
&+
\frac{rr'\bigl(2r'\sin\theta+(r-r'')\cos\theta\bigr)}
{S^3\sin\theta}-
\frac{rr'(r\sin\theta-r'\cos\theta)}{S^3\sin\theta}
\left(
\frac{r'}{r}
+
\frac{r'(r+r'')}{S^2}
+
\cot\theta
\right).
\end{aligned}
\]
Also, multiplying \eqref{eq:H} by $2r$, we have
\[
2rH
=
\frac{2r\bigl(r^2+2(r')^2-rr''\bigr)}{S^3}
+
\frac{2(r\sin\theta-r'\cos\theta)}{S\sin\theta}.
\]
Thus, for the surface-tension term on the right-hand side of equation \eqref{eq:r}, it can be rewritten as
\begin{align}{\label{eq:tension_E}}
\begin{aligned}
2rH+\frac{r^2r'}{S^2}H'
=&\;
\frac{2r\bigl(r^2+2(r')^2-rr''\bigr)}{S^3}
+
\frac{2(r\sin\theta-r'\cos\theta)}{S\sin\theta}
\\
&+
\frac{r^2r'(2rr'+3r'r''-rr''')}{S^5}-
\frac{
3r^2(r')^2(r+r'')\bigl(r^2+2(r')^2-rr''\bigr)
}{S^7}
\\
&+
\frac{rr'\bigl(2r'\sin\theta+(r-r'')\cos\theta\bigr)}
{S^3\sin\theta}-
\frac{rr'(r\sin\theta-r'\cos\theta)}{S^3\sin\theta}
\left(
\frac{r'}{r}
+
\frac{r'(r+r'')}{S^2}
+
\cot\theta
\right).
\end{aligned}
\end{align}

We now expand $(Q-\tilde{Q})_{\sigma}$. We keep the first term on the right-hand side of \eqref{eq:diff_sigma}. Then, for the second term on the right-hand side of \eqref{eq:diff_sigma}, we have
\begin{align}{\label{eq:first}}
\begin{aligned}
-\frac{1}{\sin\theta}
\frac{d}{d\theta}
\left(
\sin\theta\frac{(r')^3}{S^3}
\right)
=&\;
-\frac{(r')^3\cos\theta}{S^3\sin\theta}
-\frac{3(r')^2r''}{S^3}+
\frac{3(r')^4(r+r'')}{S^5}.
\end{aligned}
\end{align}

Next, for the third term on the right-hand side of \eqref{eq:diff_sigma}, we note that the term inside the $\theta$-derivative can be expanded by its definition as
\[
aa'
=
\frac{rr'\bigl((r')^2-rr''\bigr)}{S^4}.
\]
Therefore, taking derivative with respect to $\theta$ of the equation above, we obtain
\begin{align}{\label{eq:derivative}}
\begin{aligned}
(aa')'
=&\;
\frac{
\bigl((r')^2+rr''\bigr)\bigl((r')^2-rr''\bigr)
+
rr'\bigl(r'r''-rr'''\bigr)
}{S^4}
\\
&-
\frac{
4r(r')^2(r+r'')\bigl((r')^2-rr''\bigr)
}{S^6}.
\end{aligned}
\end{align}
Hence, applying equation \eqref{eq:derivative} to the third term on the right-hand side of \eqref{eq:diff_sigma}, we have
\begin{align}{\label{eq:second}}
\begin{aligned}
\frac{a}{\sin\theta}
\frac{d}{d\theta}
\left(
\sin\theta\,aa'
\right)
=&\;
\frac{r}{S}(aa')'
+
\frac{r\cos\theta}{S\sin\theta}aa'
\\
=&\;
\frac{r}{S}
\bigg[
\frac{
\bigl((r')^2+rr''\bigr)\bigl((r')^2-rr''\bigr)
+
rr'\bigl(r'r''-rr'''\bigr)
}{S^4}
\\
&\qquad
-
\frac{
4r(r')^2(r+r'')\bigl((r')^2-rr''\bigr)
}{S^6}
\bigg]+
\frac{
r^2r'\cos\theta\bigl((r')^2-rr''\bigr)
}{S^5\sin\theta}.
\end{aligned}
\end{align}

Moreover, by the definition of $\|B\|^{2}$
\[
\|B\|^2
=
\left(
\frac{r^2+2(r')^2-rr''}{S^3}
\right)^2
+
\left(
\frac{r\sin\theta-r'\cos\theta}{rS\sin\theta}
\right)^2,
\]
the fourth term on the right-hand side of \eqref{eq:diff_sigma} can be expressed as
\begin{align}{\label{eq:third}}
\begin{aligned}
r^2a\|B\|^2
=&\;
\frac{
r^3\bigl(r^2+2(r')^2-rr''\bigr)^2
}{S^7}+
\frac{
r\bigl(r\sin\theta-r'\cos\theta\bigr)^2
}{S^3\sin^2\theta}.
\end{aligned}
\end{align}

Applying equations \eqref{eq:first}, \eqref{eq:second}, and \eqref{eq:third} to equation \eqref{eq:diff_sigma}, it holds that
\begin{align}{\label{est:diff_sigma}}
\begin{aligned}
(Q(\theta)-\tilde{Q}(\theta))_{\sigma}
(\theta)
&=\frac{r(2r^2+3(r')^2)}{S^3}
-
\frac{1}{\sin\theta}
\frac{d}{d\theta}
\left(
\sin\theta\frac{(r')^3}{S^3}
\right)
+
\frac{a}{\sin\theta}
\frac{d}{d\theta}
\left(
\sin\theta\,aa'
\right)
+
r^2a|A|^2
\\
&=\;
\frac{r(2r^2+3(r')^2)}{S^3}
-\frac{(r')^3\cos\theta}{S^3\sin\theta}
-\frac{3(r')^2r''}{S^3}
+
\frac{3(r')^4(r+r'')}{S^5}
\\
&\qquad+
\frac{r}{S}
\bigg[
\frac{
\bigl((r')^2+rr''\bigr)\bigl((r')^2-rr''\bigr)
+
rr'\bigl(r'r''-rr'''\bigr)
}{S^4}
-
\frac{
4r(r')^2(r+r'')\bigl((r')^2-rr''\bigr)
}{S^6}
\bigg]
\\
&\qquad+
\frac{
r^2r'\cos\theta\bigl((r')^2-rr''\bigr)
}{S^5\sin\theta}+
\frac{
r^3\bigl(r^2+2(r')^2-rr''\bigr)^2
}{S^7}
+
\frac{
r\bigl(r\sin\theta-r'\cos\theta\bigr)^2
}{S^3\sin^2\theta}.
\end{aligned}
\end{align}

Subtracting the right-hand side of \eqref{est:diff_sigma} with \eqref{eq:H}, we obtain the following equivalence
\begin{align}{\label{diff_final}}
(Q-\tilde{Q})_{\sigma}-(2rH+\frac{r^2r'}{S^2}H')
=
\frac{
\bigl(r^2-S^2+(r')^2\bigr)\mathcal P
}
{S^7\sin\theta},
\end{align}
where $\mathcal{P}$ is defined as
\[
\begin{aligned}
\mathcal P
=&\;
r^5\sin\theta
-2r^4r''\sin\theta
+r^3S^2\sin\theta
+6r^3(r')^2\sin\theta
+r^3(r'')^2\sin\theta
\\
&-2r^2S^2r''\sin\theta
+2r^2(r')^2r''\sin\theta
+2rS^4\sin\theta
-rS^2r'r''\cos\theta
\\
&+4rS^2(r')^2\sin\theta
-2S^4r'\cos\theta
+3S^2(r')^2r''\sin\theta .
\end{aligned}
\]
Since $S^2=r^2+(r')^2,$
we have
\[
\frac{
\bigl(r^2-S^2+(r')^2\bigr)\mathcal P
}
{S^7\sin\theta}=0.
\]
Consequently,
\[
(Q-\widetilde Q)_\sigma
=
\sigma
\left(
2rH+\frac{r^2r'}{S^2}H'
\right).
\]

Combining the surface-tension part with the gravitational and multiplier
terms yields
\[
\begin{aligned}
Q-\widetilde Q
=&\;
\sigma
\left(
2rH+\frac{r^2r'}{S^2}H'
\right)
\\
&+
g\left[
2r^2\cos\theta
+
\frac{r^2r'}{S^2}
(r'\cos\theta-r\sin\theta)
\right]
-2r\lambda.
\end{aligned}
\]
Thus, equation \eqref{diff_final} implies that
\[
\boxed{
Q(\theta)-\widetilde Q(\theta)
=
2r\,\mathscr E(\theta)
+
\frac{r^2r'}{S^2}\mathscr E'(\theta).
}
\]
This completes the proof.
\end{proof}

From the previous theorem, to find a solution to the following equation
\begin{align}
    L_{0}(\varphi)=c\rho_{0}^{2},
\end{align} 
it suffices to find a function such that
\begin{align}{\label{eq:J}}
    \mathcal{J}(u)=c
\end{align}
\noindent where $u=\frac{\rho_{0}}{S_{0}}\varphi$.

 To study solutions of \eqref{eq:J}, we first consider the vertical translation mode
\[
v_1:=\hat k\cdot \hat n,
\]
where \(\hat k\) denotes the unit vector in the vertical direction and
\(\hat n\) is the unit normal to the equilibrium surface. In the absence of
gravity, the normal component of an ambient Killing field is a classical Jacobi
field for minimal and constant-mean-curvature surfaces. In particular,
translations of the ambient Euclidean space preserve the mean-curvature
equation, and differentiating the corresponding one-parameter family of
translated surfaces yields a solution of the linearized equation.

For the vertical translation field \(\hat k\), the corresponding normal
component is precisely \(\hat k\cdot\hat n\). Thus \(v_1=\hat k\cdot\hat n\)
is a natural mode to consider for the Jacobi operator. In the present problem,
however, the gravitational term breaks vertical translation invariance. As a
result, this mode is not a homogeneous Jacobi field, but instead satisfies an
inhomogeneous equation. We record this property in the following theorem.

\begin{theorem}\label{thm:z_translation_mode}
Let $v_1=\hat k\cdot\hat n$
be the normal component of the vertical translation field. Then \(v_1\)
satisfies
\begin{equation}\label{eq:J_u1}
\mathcal J v_1=g.
\end{equation}
Moreover, \(v_1\) does not satisfy the linearized volume constraint. Equivalently,
\begin{equation}\label{eq:u1_not_volume_preserving}
\int_{0}^{\frac{\pi}{2}} \rho_{0}^{2}(\frac{S_{0}}{\rho_{0}}v_{1})\sin\theta~d\theta \neq 0.
\end{equation}
\end{theorem}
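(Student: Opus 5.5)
The plan is to treat the two assertions separately. The first is a geometric identity for the Jacobi operator, obtained by Codazzi and the Euler--Lagrange equation rather than by coordinate computation. The second is a divergence-theorem computation.

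\textbf{Step 1: the identity $\mathcal J v_1=g$.} I would use the classical formula for the Laplacian of the Gauss map on a hypersurface in $\mathbb R^3$. With $\hat n=\nu$ the outward normal $(re_r-r'e_\theta)/S$ and $H$ the mean curvature in the sign convention of \eqref{eq:H}, it reads
\[
\Delta_{\Sigma_0}\nu=-\|B\|^2\nu-\nabla_{\Sigma_0}H,
\]
which follows from the Codazzi equations. Dotting with the constant vector $\hat k$ gives
\[
-\sigma\Delta_{\Sigma_0}v_1-\sigma\|B\|^2v_1=\sigma\,\hat k\cdot\nabla_{\Sigma_0}H .
\]
The Euler--Lagrange equation $\mathscr E=0$ says $\sigma H=\lambda-gz$ on $\Sigma_0$. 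Hence $\sigma\nabla_{\Sigma_0}H=-g\nabla_{\Sigma_0}z=-g(\hat k-\nu_3\nu)$, and so $\sigma\,\hat k\cdot\nabla_{\Sigma_0}H=-g(1-\nu_3^2)$.

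The overall sign in this last step depends on the orientation convention in the Gauss-map identity. The main obstacle is fixing that sign consistently with the paper's convention for $H$, and with the sign in front of $g(\hat k\cdot\hat n)$ in $\mathcal J$. The correct combination must give
\[
-\sigma\Delta v_1-\sigma\|B\|^2v_1=g(1-\nu_3^2).
\]
Adding the zeroth-order term $g\nu_3 v_1=g\nu_3^2$ then yields $\mathcal J v_1=g$.

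To pin down the sign, I would check the identity on the sphere with $g=0$. There $H=2/R$, $\|B\|^2=2/R^2$, and $\nu_3=\cos\theta$ is a first spherical harmonic. Both sides of the Gauss-map identity can be checked directly in that case. Equivalently, one can argue by moving-surface reasoning: translating $\Sigma_0$ vertically by $t$ changes the residual $\sigma H+gz-\lambda$ by exactly $gt$, and its normal velocity is $v_1$. So the linearization $\mathcal J$ (the derivative of $\sigma H+gz$ in the normal direction) applied to $v_1$ must be $g$. This gives an independent consistency check.

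\textbf{Step 2: the volume integral.} The area element of $\Sigma_0$ in these coordinates is $dA=\rho_0S_0\sin\theta\,d\theta\,d\phi$. Therefore
\[
\int_0^{\pi/2}\rho_0^2\frac{S_0}{\rho_0}v_1\sin\theta\,d\theta=\frac1{2\pi}\int_{\Sigma_0}\hat k\cdot\nu\,dA .
\]
Let $\Omega$ be the drop region, bounded by $\Sigma_0$ and the base disk $D=\{z=0,\ |x|\le\rho_0(\pi/2)\}$. The outward normal of $\Omega$ on $D$ is $-\hat k$. Applying the divergence theorem to the constant field $\hat k$ gives
\[
0=\int_{\Sigma_0}\hat k\cdot\nu\,dA-|D|,
\]
so the integral equals $\pi\rho_0(\pi/2)^2/(2\pi)=\rho_0(\pi/2)^2/2>0$. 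Here $\rho_0(\pi/2)>0$ because the contact line is a genuine circle of positive radius. This proves \eqref{eq:u1_not_volume_preserving}.

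I would also note that $\nu$ as written is indeed the outward normal, since its $e_r$-component is $r/S>0$.
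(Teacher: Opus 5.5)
Your Step 1 follows the paper's route: the Gauss-map identity plus the Euler--Lagrange equation. However, the identity you state has the wrong sign for the convention of \eqref{eq:H}. There $H=\operatorname{div}_{\Sigma_0}\nu$ with $\nu$ outward, so $H=2/R$ on a sphere. With $h_{ij}=\langle\bar\nabla_{e_i}\nu,e_j\rangle$, Codazzi gives $\Delta_{\Sigma_0}\nu=\sum_{i,j}h_{ij,i}e_j-|h|^2\nu=\nabla_{\Sigma_0}H-\|B\|^2\nu$. Dotting with $\hat k$ gives $-\Delta_{\Sigma_0}v_1-\|B\|^2v_1=-\hat k\cdot\nabla_{\Sigma_0}H$, which is the paper's \eqref{eq:R-V-I}.

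Your version yields $\mathcal J v_1=-g+2g\nu_3^2$. Saying the correct combination ``must give'' $g(1-\nu_3^2)$ is arguing from the conclusion. The sphere test you propose cannot repair this. On a sphere $H$ is constant, so the $\nabla_{\Sigma_0}H$ term vanishes and both signs pass.

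What does close the step is your moving-surface argument. It is a full proof, not just a consistency check, and it is the same mechanism the paper uses in Step~2 of Theorem~\ref{thm:lambda_variation_mode} and for the horizontal translations. Write $\hat k=v_1\nu+\hat k^{\top}$. At corresponding points the translated residual equals $gt$. By the variation formula \eqref{eq:variation_H}, its $t$-derivative is $\mathcal J v_1+\hat k^{\top}\cdot\nabla_{\Sigma_0}\mathscr E$. You should state this tangential term explicitly; it vanishes because $\mathscr E\equiv0$ on $\Sigma_0$. The signs in the normal-variation formula $\delta H=-\Delta_{\Sigma_0}u-\|B\|^2u$ can be checked on a sphere (constant $u$) and on a flat graph, unlike the $\nabla H$ sign in the Gauss-map identity. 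So present this argument as the proof, or else correct the sign via the Codazzi computation.

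Step 2 is correct and takes a different route from the paper. The paper substitutes $\eta_1=\cos\theta+\frac{\rho_0'}{\rho_0}\sin\theta$ and integrates $\rho_0\rho_0'\sin^2\theta$ by parts. You instead recognize $\rho_0S_0\sin\theta\,d\theta\,d\phi$ as the area element of $\Sigma_0$ and apply the divergence theorem to $\hat k$ on the drop region. Both give $\tfrac12\rho_0(\pi/2)^2$. Your version explains the value as the wetted area divided by $2\pi$, but it needs $\nu$ to be outward, which holds by star-shapedness as you note. The paper's one-line integration by parts is the same identity in coordinates and needs no such global input.
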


\begin{proof}
    For any constant vector field \(\hat e\), set \(u=\hat e\cdot \hat n\). By the
Ruh--Vilms identity for the Gauss map \cite{RuhVilms1970}, one has
\begin{align}{\label{eq:R-V-I}}
    -\Delta_{\Sigma}u-|A|^2u
    =
    -\hat e\cdot\nabla_{\Sigma}(H),
\end{align}
where $H$ is the mean curvature.
Using the Euler-Lagrange equation
\begin{align*}
    \sigma(H)+gz=\lambda,
\end{align*}
we have
\begin{align}{\label{eq:Lagrange_sub}}
    \sigma \nabla_s (H) = -g \nabla_s z=-g(\hat{k}-(\hat{k}\cdot \hat{n})\hat{n})=-g(\hat{k}-v_{1}\hat{n})
\end{align}

\noindent Therefore, when $\hat{e}=\hat{k}$ and $u=v_{1}$, applying equation \eqref{eq:Lagrange_sub} to equation \eqref{eq:R-V-I}, it holds that
\begin{align}
    \mathcal{J}v_{1}=g(\hat{k}-v_{1}\hat{n})\cdot \hat{k}+gv_{1}^{2}=g.
\end{align}
This finishes the proof of equation \eqref{eq:J_u1}.

We now prove \eqref{eq:u1_not_volume_preserving}. In spherical coordinates, the vertical translation mode has the explicit form
\begin{equation}\label{eq:v_1_s}
v_1
=
\hat k\cdot \hat n
=
\frac{\rho_0\cos\theta+\rho_0'\sin\theta}{S_0},
\qquad
S_0=\sqrt{\rho_0^2+(\rho_0')^2}.
\end{equation}
Since the radial perturbation \(\eta\) and the normal perturbation $u$ are related by $u=\frac{\rho_0}{S_0}\eta,$ the radial perturbation corresponding to \(v_1\) is
\[
\eta_1
=
\frac{S_0}{\rho_0}v_1
=
\cos\theta+\frac{\rho_0'}{\rho_0}\sin\theta .
\]
Substituting \(\eta_1\) into the linearized volume constraint gives
\begin{align}{\label{eq:V_1}}
\int_0^{\frac{\pi}{2}}
\rho_0^2 \eta_1 \sin\theta~d\theta
&=
\int_0^{\frac{\pi}{2}}
\rho_0^2\sin\theta
\left(
\cos\theta+\frac{\rho_0'}{\rho_0}\sin\theta
\right)
d\theta=
\int_0^{\frac{\pi}{2}}
\left(
\rho_0^2\sin\theta\cos\theta
+
\rho_0\rho_0'\sin^2\theta
\right)
d\theta.
\end{align}
For the second term on the right-hand side of the equation above, integration by parts yields
\begin{align}{\label{eq:V_2}}
\int_0^{\frac{\pi}{2}}
\rho_0\rho_0'\sin^2\theta~d\theta
=
\frac12
\left[
\rho_0^2\sin^2\theta
\right]_0^{\frac{\pi}{2}}
-
\int_0^{\frac{\pi}{2}}
\rho_0^2\sin\theta\cos\theta~d\theta.
\end{align}
Therefore, applying \eqref{eq:V_2} to \eqref{eq:V_1}, we obtain
\begin{align}
\int_0^{\frac{\pi}{2}}
\rho_0^2 \eta_1 \sin\theta~d\theta
&=
\int_0^{\frac{\pi}{2}}
\rho_0^2\sin\theta\cos\theta~d\theta
+
\frac12
\left[
\rho_0^2\sin^2\theta
\right]_0^{\frac{\pi}{2}}
-
\int_0^{\frac{\pi}{2}}
\rho_0^2\sin\theta\cos\theta~d\theta \notag \
=
\frac12 \rho_0^2\left(\frac{\pi}{2}\right)
>0 .
\end{align}
Hence the vertical translation mode does not satisfy the linearized volume constraint.
\end{proof}

We have now constructed an inhomogeneous solution \(v_1\) for the Jacobi operator \(\mathcal J\). In order to solve equation \eqref{eq:J} for an arbitrary constant \(c\), we need to understand the general solution structure of the corresponding second-order ODE. In particular, besides the vertical translation mode constructed above, we need another special solution of \eqref{eq:Jacobi_equation} that is linearly independent of \(v_1\).

The horizontal translation modes are not axisymmetric, while the vertical translation mode has already been identified. Therefore, to obtain another axisymmetric special solution, we consider variations of the equilibrium branch with respect to the Lagrange multiplier \(\lambda\).

Consider the Euler-Lagrange equation. For simplicity, we use $\rho$ to denote $\rho_{0}(\theta,\lambda)$. Then we have
\begin{align}
    \sigma H(\rho)+g\rho\cos\theta-\lambda=0.
\end{align}
Taking derivative with respect to $\lambda$ of both sides of the equation above at $\lambda=\lambda_{0}$, we have
\begin{align}{\label{eq:linear_2}}
    \sigma \p_{\rho}H_{\rho}(\rho_{0})\rho_{\lambda}+g\rho_{\lambda}\cos\theta=1
\end{align}
Then we establish the following theorem showing the relation between equation \eqref{eq:linear_2} and the Jacobi equation.
\begin{theorem}\label{thm:lambda_variation_mode}
Let \(\chi\) be the solution of the linearized equation \eqref{eq:linear_2}
obtained by differentiating the equilibrium profile with respect to the
Lagrange multiplier \(\lambda\). Then the corresponding normal variation
\[
u_\lambda:=\frac{\rho_0}{S_0}\chi,
\qquad
S_0=\sqrt{\rho_0^2+(\rho_0')^2},
\]
satisfies
\begin{equation}\label{eq:J_ulambda}
\mathcal J u_\lambda =1 .
\end{equation}
\end{theorem}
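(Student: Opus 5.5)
Differentiating the branch \(\rho_0(\theta;\lambda)\) in \(\lambda\) gives a radial variation \(\chi=\partial_\lambda\rho_0\). The plan is to identify the linearization of the mean-curvature part of the Euler--Lagrange equation with the geometric Jacobi operator, in the same way Theorem~\ref{thm:J_e} did for the full radial operator \(L_0\). Only this linearization is needed, so there is no extra volume-constraint bookkeeping.

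\textbf{Step 1.} I would first express \(L_0\) as the linearization of the Euler--Lagrange residual. The first variation of \(\mathcal F^\lambda\) in the direction \(\psi\) equals
\[
\int_{\mathbb S^2_+}\rho^2\,\mathscr E(\rho)\,\psi\,d\omega
\]
plus a boundary term, where \(\mathscr E(\rho)=\sigma H(\rho)+g\rho\cos\theta-\lambda\) is the residual \eqref{eq:residual}. This is the classical identity: the radial graph's first variation is \(\int (\sigma H+gz-\lambda)\,(\text{normal speed})\,dA\), with normal speed \(\frac{\rho}{S}\psi\) and \(dA=\rho S\,d\omega\). Differentiating once more in the direction \(\eta\) and evaluating at the equilibrium, where \(\mathscr E(\rho_0)=0\), the term \(\partial_\rho(\rho^2)\,\mathscr E\) drops out. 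The interior Euler--Lagrange operator of \(B\) is therefore
\[
L_0\eta=\rho_0^2\,D\mathscr E(\rho_0)[\eta]=\rho_0^2\bigl(\sigma\,\partial_\rho H(\rho_0)\eta+g\eta\cos\theta\bigr).
\]
This is consistent with the previous section, where \(Q-\widetilde Q\) was shown to be a combination of \(\mathscr E\) and \(\mathscr E'\). As a cross-check, the exact equality could be obtained by direct comparison of coefficients, as in Lemma~\ref{lem:equi}.

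\textbf{Step 2.} I would combine Step 1 with Theorem~\ref{thm:J_e}. This gives \(\sigma\,\partial_\rho H(\rho_0)\eta+g\eta\cos\theta=\mathcal J\bigl(\frac{\rho_0}{S_0}\eta\bigr)\) for axisymmetric \(\eta\). Taking \(\eta=\chi\), the left side equals \(1\) by \eqref{eq:linear_2}, so \(\mathcal J u_\lambda=1\).

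The main obstacle is justifying Step 1 rigorously. One has to show that the interior operator of the second variation is exactly \(\rho_0^2\) times the linearized residual, with no leftover zeroth-order terms. I would handle this by the abstract argument: write \(\delta\mathcal F^\lambda(\rho)[\psi]=\int \rho^2\mathscr E(\rho)\psi\,d\omega+\text{bdry}\), then differentiate in \(\rho\). A more elementary alternative avoids the variational identity. The geometric fact that the linearization of \(H\) under a normal variation \(u\) is \(-\Delta u-|A|^2u\), together with the linearization of \(g z\) under that normal variation being \(g\nu_3 u\), gives \(\mathcal J u\) directly. Tangential reparametrization contributes \(\nabla(\sigma H+gz)\cdot T=0\) to first order, since \(\sigma H+gz=\lambda\) is constant along \(\Sigma_0\). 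Either route proves \(\mathcal J u_\lambda=1\).
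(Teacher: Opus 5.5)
Your argument is correct, but your main route is not the paper's.

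\textbf{The paper's route.} The paper never passes through $L_0$ or Theorem~\ref{thm:J_e}. It computes $D_\rho\mathscr E(\rho_0,\lambda_0)[\varphi]$ directly by splitting the radial variation as $W=\varphi e_r=u\nu+W^\top$, with $u=\frac{\rho_0}{S_0}\varphi$ and $W^\top=\frac{\rho_0'}{S_0}\varphi\,\tau$. It then uses the variation formula $D_\rho H[\varphi]=-\Delta_{\Sigma_0}u-|A|^2u+W^\top\cdot\nabla_{\Sigma_0}H$ together with $\varphi\cos\theta=\nu_3u+\frac{\rho_0'}{S_0^2}\varphi z_0'$. This gives $D_\rho\mathscr E[\varphi]=\mathcal J u+\frac{\rho_0'}{S_0^2}\varphi\,\mathscr E'$. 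The last term vanishes at equilibrium, and differentiating $\mathscr E(\rho_\lambda,\lambda)=0$ finishes the proof. That is exactly the ``more elementary alternative'' you sketch at the end.

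\textbf{Your primary route.} You first prove $L_0=\rho_0^2D\mathscr E(\rho_0)$ from the identity $\delta\mathcal F^\lambda(\rho)[\psi]=\int\rho^2\mathscr E(\rho)\psi\,d\omega+\text{bdry}$. This holds for every $C^2$ profile, since normal speed times area element is $\frac{\rho}{S}\psi\cdot\rho S=\rho^2\psi$. You then need uniqueness of the interior operator of $B$ against test functions supported away from $\theta=\pi/2$; you use this implicitly and should state it. Finally you import Theorem~\ref{thm:J_e}. This is legitimate, since that theorem precedes this one. It also gives a clean structural reason why $Q-\widetilde Q$ is built only from $\mathscr E$ and $\mathscr E'$.

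\textbf{Trade-offs.} Your proof is short, but it rests entirely on the long coefficient comparison in Lemma~\ref{lem:equi}. It is also confined to axisymmetric $\eta$. That suffices for $\chi$, but the paper's geometric identity is independent of that algebra and holds verbatim for non-axisymmetric $\varphi$. The paper reuses it for the $m=1$ horizontal translation modes. Note also that your Step~1 combined with the paper's geometric identity proves Theorem~\ref{thm:J_e} itself, with no need for the computation in Lemma~\ref{lem:equi}.
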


\begin{proof}

\textbf{Step 1.} In this step, we introduce some notation that will be used in the subsequent geometric computations.

 We first introduce the following notations
\[
\rho_0=\rho_0(\theta),
\qquad
S_0=\sqrt{\rho_0^2+(\rho_0')^2}.
\]
We parameterize the equilibrium surface by
\[
X_0(\theta,\phi)
=
\rho_0(\theta)e_r(\theta,\phi),
\]
where $e_r
=
(\sin\theta\cos\phi,\sin\theta\sin\phi,\cos\theta).$
Recall that the unit normal vector can be written as
\[
\nu
=
\frac{\rho_0 e_r-\rho_0'e_\theta}{S_0}.
\]
Moreover, the height function is
\[
z_0(\theta)=\rho_0(\theta)\cos\theta.
\]

Consider an arbitrary radial perturbation of the steady state given by
\[
\rho_\varepsilon(\theta)
=
\rho_0(\theta)+\varepsilon \varphi(\theta).
\]
Then the corresponding variation vector field is
\[
W
=
\left.\frac{d}{d\varepsilon}\right|_{\varepsilon=0}
\rho_\varepsilon e_r
=
\varphi e_r.
\]
Since
\[
e_r
=
\frac{\rho_0}{S_0}\nu
+
\frac{\rho_0'}{S_0}\tau,
\]
where $\nu$ is the unit normal vector and $\tau
=
\frac{\rho_0'e_r+\rho_0 e_\theta}{S_0}$
is the unit tangent vector in the meridian direction,  the variation vector $W$ can be written as
\[
W=u\nu+W^\top,
\]
with
\begin{align}{\label{eq:perturb_vector}}
u=W\cdot \nu=\frac{\rho_0}{S_0}\varphi, \quad\operatorname{and}\quad W^\top
=
\frac{\rho_0'}{S_0}\varphi\,\tau.
\end{align}

The Euler--Lagrange residual is
\[
\mathscr E(\rho,\lambda)
=
\sigma H(\rho)+gz(\rho)-\lambda.
\]
For the background profile \(\rho_0\), this is
\[
\mathscr E(\rho_0,\lambda_0)
=
\sigma H+gz_0-\lambda_0.
\]

\textbf{Step 2.} In this step, we compute the geometric variation of the Euler--Lagrange equation with respect to the variation vector field $W$ defined in Step~1.

By the geometric variation formula for mean curvature, under the variation $W=u\nu+W^\top,$
we have
\begin{align}{\label{eq:variation_H}}
D_\rho H(\rho_0)[\varphi]
=
-\Delta_{\Sigma_0}u-|A|^2u
+
W^\top\cdot\nabla_{\Sigma_0}H.
\end{align}
Since \(H=H(\theta)\) is a function depending only on $\theta$, 
\begin{align}{\label{eq:nabla_H}}
\nabla_{\Sigma_0}H
=
\frac{H'}{S_0}\tau.
\end{align}
Therefore, applying the definition of $W^{\top}$ given by \eqref{eq:perturb_vector} to the equation \eqref{eq:nabla_H}, the third term on the right-hand side of equation \eqref{eq:variation_H}, we have
\[
W^\top\cdot\nabla_{\Sigma_0}H
=
\frac{\rho_0'}{S_0}\varphi\frac{H'}{S_0}
=
\frac{\rho_0'}{S_0^2}\varphi H'.
\]
Hence, equation \eqref{eq:variation_H} can be rewritten as
\begin{align}{\label{eq:variation_H_f}}
D_\rho H(\rho_0)[\varphi]
=
-\Delta_{\Sigma_0}u-|A|^2u
+
\frac{\rho_0'}{S_0^2}\varphi H'.
\end{align}

Next, since $z(\rho)=\rho\cos\theta,$
the variation of the gravitational term in the Euler-Lagrange equation is given by
\begin{align}{\label{eq:variation_G}}
D_\rho z(\rho_0)[\varphi]
=
\varphi\cos\theta.
\end{align}
On the other hand,
\[
\nu_3
=
\nu\cdot e_3
=
\frac{\rho_0\cos\theta+\rho_0'\sin\theta}{S_0}, \quad\operatorname{and}\quad z_0'
=
\rho_0'\cos\theta-\rho_0\sin\theta.
\]
Using the notation $u=\frac{\rho_0}{S_0}\varphi,$
we compute
\[
\nu_3u+\frac{\rho_0'}{S_0^2}\varphi z_0'
=
\frac{\rho_0\varphi}{S_0^2}
\left(
\rho_0\cos\theta+\rho_0'\sin\theta
\right)
+
\frac{\rho_0'\varphi}{S_0^2}
\left(
\rho_0'\cos\theta-\rho_0\sin\theta
\right).
\]
The cross terms cancel, so
\begin{align}{\label{eq:reform}}
\nu_3u+\frac{\rho_0'}{S_0^2}\varphi z_0'
=
\frac{\varphi}{S_0^2}
\left(
\rho_0^2+(\rho_0')^2
\right)
\cos\theta
=
\varphi\cos\theta.
\end{align}
Therefore, applying equation \eqref{eq:reform} to equation \eqref{eq:variation_G}, we obtain
\begin{align}{\label{eq:variation_G_f}}
D_\rho z(\rho_0)[\varphi]
=
\nu_3u+\frac{\rho_0'}{S_0^2}\varphi z_0'.
\end{align}

Combining the two variation formulas \eqref{eq:variation_H_f} and \eqref{eq:variation_G_f}, we obtain
\begin{align}{\label{eq:variation_f_0}}
\begin{aligned}
D_\rho\mathscr E(\rho_0,\lambda_0)[\varphi]
&=
\sigma D_\rho H(\rho_0)[\varphi]
+
gD_\rho z(\rho_0)[\varphi]
\\
&=
\sigma
\left(
-\Delta_{\Sigma_0}u-|A|^2u
+
\frac{\rho_0'}{S_0^2}\varphi H'
\right)
+
g
\left(
\nu_3u
+
\frac{\rho_0'}{S_0^2}\varphi z_0'
\right)
\\
&=
-\sigma\Delta_{\Sigma_0}u
-\sigma |A|^2u
+
g\nu_3u
+
\frac{\rho_0'}{S_0^2}\varphi
\left(
\sigma H'+gz_0'
\right).
\end{aligned}
\end{align}

Taking derivative with respect to $\theta$ of the Lagrange residual
\[
\mathscr E(\rho_0,\lambda_0)
=
\sigma H+gz_0-\lambda_0,
\]
we have
\begin{align}{\label{eq:residue_derivative}}
\mathscr E'(\rho_0,\lambda_0)
=
\sigma H'+gz_0'.
\end{align}
Thus, applying equation \eqref{eq:residue_derivative} to equation \eqref{eq:variation_f_0}, we have
\begin{align}{\label{eq:variation_f_1}}
D_\rho\mathscr E(\rho_0,\lambda_0)[\varphi]
=
-\sigma\Delta_{\Sigma_0}u
-\sigma |A|^2u
+
g\nu_3u
+
\frac{\rho_0'}{S_0^2}\varphi\,
\mathscr E'(\rho_0,\lambda_0).
\end{align}

From the definition of the Jacobi operator $\mathcal{J}$:
\[
\mathcal J u
=
-\sigma\Delta_{\Sigma_0}u
-\sigma |A|^2u
+
g\nu_3u,
\]
equation \eqref{eq:variation_f_1} can be rewritten as
\begin{align}{\label{eq:variation_0}}
\boxed{
D_\rho\mathscr E(\rho_0,\lambda_0)[\varphi]
=
\mathcal J u
+
\frac{\rho_0'}{S_0^2}\varphi\,
\mathscr E'(\rho_0,\lambda_0),
\qquad
u=\frac{\rho_0}{S_0}\varphi.
}
\end{align}

\textbf{Step 3.} In this step, we show that $u_{\lambda}$ solves the Jacobi equation $\mathcal{J}(u_{\lambda})=1$.

Since \(\rho_0\) satisfies the equilibrium equation $\mathscr E(\rho_0,\lambda_0)=0,$ it holds that
\[
\mathscr E'(\rho_0,\lambda_0)=0.
\]
Consequently,
\begin{align}{\label{eq:variation_1}}
\boxed{
D_\rho\mathscr E(\rho_0,\lambda_0)[\varphi]
=
\mathcal J
\left(
\frac{\rho_0}{S_0}\varphi
\right).
}
\end{align}

Now suppose \(\rho_\lambda\) is a smooth family of equilibria satisfying
\[
\mathscr E(\rho_\lambda,\lambda)=0,
\qquad
\rho_{\lambda_0}=\rho_0.
\]
We introduce the notation
\[
\chi(\theta)
:=
\partial_\lambda\rho_\lambda(\theta)\big|_{\lambda=\lambda_0}.
\]
Differentiating $\mathscr E(\rho_\lambda,\lambda)=0$
with respect to \(\lambda\), we obtain
\[
D_\rho\mathscr E(\rho_0,\lambda_0)[\chi]
+
\partial_\lambda\mathscr E(\rho_0,\lambda_0)
=
0.
\]
Since $\partial_\lambda\mathscr E(\rho_0,\lambda_0)=-1,$
we have
\begin{align}{\label{eq:variation_lambda_0}}
D_\rho\mathscr E(\rho_0,\lambda_0)[\chi]=1.
\end{align}

Therefore, applying equation \eqref{eq:variation_1} to equation \eqref{eq:variation_lambda_0}, we obtain
\[
\boxed{
\mathcal J
\left(
\frac{\rho_0}{S_0}\chi
\right)
=1.
}
\]
Equivalently, setting $u_{\lambda}=\frac{\rho_{0}}{S_{0}}\chi$, we have
\[
L\chi
=
\rho_0^2
\mathcal J
\left(u_{\lambda}
\right),
\]
which implies that
\[
\boxed{\rho_{0}^{2}\mathcal{J}u_{\lambda}=L\chi=\rho_0^2.
}
\]

\end{proof}

We now define $\eta_{2}=\chi$. This function has the following two properties.

\textbf{Property 1} $\eta_{2}$ satisfies the boundary condition
\begin{align}
    \mathcal{B}(\eta_{2}):=\frac{\rho_0^3}{S_0^3} \eta_{2}'(\pi/2) + \left( \frac{(\rho_0')^3}{S_0^3} - \cos\gamma_e \right) \eta_{2}(\pi/2) = 0
\end{align}

\textbf{Property 2}
$\eta_{2}$ satisfies the linearized volume conservation law if and only if
\begin{align}
    \frac{dV}{d\lambda}|_{\lambda=\lambda_{0},\rho=\rho_{0}}= 0
\end{align}
These two properties follow directly from the definition of \(\eta_2\) and from taking derivative with respect to $\lambda$ of the identities
\[
\frac{2\pi}{3}\int_{0}^{\frac{\pi}{2}}\rho_{\lambda}^{3}\sin\theta\,d\theta=V,
\qquad
\frac{\rho_{\lambda}^{\prime}}
{\sqrt{\rho_{\lambda}^{2}+(\rho_{\lambda}^{\prime})^{2}}}
\left(\frac{\pi}{2}\right)
=
\cos\gamma_e,
\]
both of which are invariant under variations of the parameter \(\lambda\).
\begin{remark}[Role of the pressure--volume nondegeneracy condition]
The condition
\[
\left.\frac{d}{d\lambda}V(\rho_\lambda)\right|_{\lambda=\lambda_0}\neq 0
\]
plays an essential role in the stability analysis. Indeed, if
\[
\left.\frac{d}{d\lambda}V(\rho_\lambda)\right|_{\lambda=\lambda_0}=0,
\]
then \(\eta_2\) becomes a nontrivial admissible solution of the constrained Jacobi problem
\[
L u=c\rho_0^2,
\qquad
\mathcal B[u]=0,
\qquad
\int_0^{\pi/2}
\rho_0^2u\sin\theta~d\theta=0.
\]
Consequently, the second variation of the functional \(\mathcal{F}\), restricted to volume-preserving perturbations, admits a nontrivial kernel element. This degeneracy prevents the coercivity needed for the dynamic stability argument.
\end{remark}

From a geometric point of view, the quantity $\left.\frac{d}{d\lambda}V(\rho_\lambda)\right|_{\lambda=\lambda_0}$
is the local slope of the pressure--volume response curve along the equilibrium branch. Its vanishing corresponds to a turning point, or fold, of this branch. Therefore, the condition
\[
\left.\frac{d}{d\lambda}V(\rho_\lambda)\right|_{\lambda=\lambda_0}\neq 0
\]
can be viewed as a local no-turning-point nondegeneracy condition; see Maddocks~\cite{Maddocks1987}, Lowry--Steen~\cite{LowrySteen1995}, and Bostwick--Steen~\cite{BostwickSteen2015}.

Determining the sign, or at least the
non-vanishing, of the quantity $\frac{dV}{d\lambda}$
is quite difficult. Although \(V\) is defined explicitly in terms of the equilibrium profile
\(\rho_\lambda\), the dependence of \(\rho_\lambda\) on the Lagrange multiplier
\(\lambda\) is highly implicit through the nonlinear Young--Laplace equation
and the contact angle boundary condition. Consequently, differentiating the
volume constraint with respect to \(\lambda\) leads to a linearized boundary
value problem whose solution is not available in a closed form. In particular,
showing $\frac{dV}{d\lambda}\neq 0$
requires detailed information on the solution of this linearized problem, and
such information appears to be difficult to extract directly from the ODE. To
the best of our knowledge, there is no existing result in the sessile-drop
setting that explicitly computes this derivative or establishes its
non-vanishing in the generality needed here. This is one of the main obstacles
in ruling out additional kernel elements through a direct volume-parameter
argument. I will also use the following subsection to illustrate this difficulty.

\subsection{The pressure--volume derivative in the graph formulation}

We briefly explain why the nonvanishing of \(dV/d\lambda\) is nontrivial in
the free-contact-line problem. Suppose the sessile drop can be written as a
graph
\[
z=h(x,y),\qquad (x,y)\in \Omega\subset \mathbb R^2,
\]
where \(\Omega\) is the wetted region on the flat substrate. The energy is
\[
E(h,\Omega)
=
\sigma\int_\Omega \sqrt{1+|\nabla h|^2}\,dxdy
-\sigma\cos\gamma_e |\Omega|
+\frac{g}{2}\int_\Omega h^2\,dxdy ,
\]
with the volume constraint
\[
V=\int_\Omega h\,dxdy.
\]
Introducing a Lagrange multiplier \(\lambda\), the Euler--Lagrange equation is
\[
-\sigma\operatorname{div}
\left(
\frac{\nabla h}{\sqrt{1+|\nabla h|^2}}
\right)
+gh-\lambda=0
\qquad\text{in }\Omega.
\]
The contact-line conditions are
\[
h=0
\qquad\text{on }\partial\Omega, \quad \operatorname{and}\quad \frac{1}{\sqrt{1+|\nabla h|^2}}
=
\cos\gamma_e
\qquad\text{on }\partial\Omega.
\]

Moreover, if \(n\) denotes the outward unit normal to \(\partial\Omega\)
 in the substrate plane, then
\[
\partial_n h=-\tan\gamma_e
\qquad\text{on }\partial\Omega.
\]

Integrating the Euler--Lagrange equation over \(\Omega\), we have
\[
-\sigma\int_\Omega
\operatorname{div}
\left(
\frac{\nabla h}{\sqrt{1+|\nabla h|^2}}
\right)\,dxdy
+
g\int_\Omega h\,dxdy
-
\lambda|\Omega|
=0.
\]
Using the divergence theorem, the equation above implies
\begin{align}{\label{hard_0}}
-\sigma\int_{\partial\Omega}
\frac{\nabla h}{\sqrt{1+|\nabla h|^2}}\cdot n\,ds
+
gV
-
\lambda|\Omega|
=0.
\end{align}
Applying the contact angle condition
\[
\frac{\nabla h}{\sqrt{1+|\nabla h|^2}}\cdot n
=
\frac{\partial_n h}{\sqrt{1+|\nabla h|^2}}
=
-\sin\gamma_e
\qquad\text{on }\partial\Omega
\]
to the equation \eqref{hard_0}, we obtain the identity
\begin{align}{\label{eq:hard_1}}
\lambda|\Omega|
=
gV+\sigma\sin\gamma_e |\partial\Omega|.
\end{align}

In the axisymmetric case, $\Omega=B_R(0),$
so
\[
|\Omega|=\pi R^2,
\qquad
|\partial\Omega|=2\pi R.
\]
Thus, equation \eqref{eq:hard_1} implies
\[
\lambda\pi R^2
=
gV+2\pi\sigma R\sin\gamma_e .
\]
Equivalently,
\begin{align}{\label{eq:hard_2}}
V
=
\frac{\lambda\pi R^2}{g}
-
\frac{2\pi\sigma R\sin\gamma_e}{g}.
\end{align}

If the contact radius \(R\) were fixed, then equation \eqref{eq:hard_2} immediately gives
\[
\boxed{
\frac{dV}{d\lambda}
=
\frac{\pi R^2}{g}
\neq0.
}
\]
Thus, for a pinned contact line, the pressure--volume nondegeneracy follows
directly from the integrated Euler--Lagrange equation.

However, in the free-contact-line problem considered here, the contact radius
also depends on the pressure parameter:
\[
R=R(\lambda).
\]
Differentiating the identity
\[
gV=\lambda\pi R^2-2\pi\sigma R\sin\gamma_e
\]
with respect to \(\lambda\), we obtain
\[
gV'(\lambda)
=
\pi R^2
+
2\pi\lambda R R'(\lambda)
-
2\pi\sigma\sin\gamma_e R'(\lambda).
\]
Hence
\[
gV'(\lambda)
=
\pi R^2
+
2\pi R
\left(
\lambda-\frac{\sigma\sin\gamma_e}{R}
\right)
R'(\lambda).
\]
Therefore the condition
\[
V'(\lambda)\neq0
\]
is equivalent to
\begin{align}{\label{eq:hard_3}}
\boxed{
\pi R^2
+
2\pi R
\left(
\lambda-\frac{\sigma\sin\gamma_e}{R}
\right)
R'(\lambda)
\neq0.
}
\end{align}

This formula illustrates the main difficulty. The first term on the right-hand side of equation \eqref{eq:hard_3} $\pi R^2$
is strictly positive. However, in the free-boundary problem the second term
depends on the response of the contact radius \(R(\lambda)\). A priori this
term could cancel the positive contribution. Hence the integrated
Euler--Lagrange identity alone does not imply
\[
V'(\lambda)\neq0.
\]

Equivalently, one may solve the identity for \(\lambda\):
\[
\boxed{
\lambda
=
\frac{gV}{\pi R^2}
+
\frac{2\sigma\sin\gamma_e}{R}.
}
\]
This gives a necessary algebraic relation among \(V\), \(R\), and \(\lambda\),
but it does not determine \(R\) as a function of \(\lambda\). The dependence
\[
R=R(\lambda)
\]
is determined by the full Young--Laplace boundary value problem. Thus the
nonvanishing condition
\[
\frac{dV}{d\lambda}\neq0
\]
is almost impossible to derive without knowing the solution of the Euler–Lagrange equation for each $\lambda$.

Consequently, this condition is precisely the nondegeneracy condition that
excludes pressure--volume turning points. It ensures that changing the pressure
parameter genuinely changes the volume to first order, and it prevents the
pressure variation from becoming an admissible Jacobi field. In this sense,
the condition is not merely technical: it rules out degeneracy of the
constrained stability problem. We have to keep this condition in the remaining part of this paper.

We next discuss some special cases satisfying this nondegeneracy condition.
\subsection{Asymptotic pressure--volume relations}

Let \(V\) denote the physical volume of the sessile drop, \(\lambda\) the
Lagrange multiplier in the Young--Laplace equation
\[
\sigma H+gz-\lambda=0,
\]
and let \(\gamma_e\in(0,\pi)\) be the equilibrium contact angle. We write $\ell_c=\sqrt{\frac{\sigma}{g}}$
for the capillary length.

\paragraph{\textbf{1. Small-volume regime.}}

As \(V\to0\), the gravitational term is lower order and the leading-order
profile is a spherical cap. If \(R\) denotes the contact radius, then
\[
V
=
\frac{\pi R^3}{3}
\frac{2-3\cos\gamma_e+\cos^3\gamma_e}{\sin^3\gamma_e}
+o(R^3).
\]
Moreover, for the leading spherical cap, the Lagrange multiplier satisfies
\[
\lambda
=
\frac{2\sigma\sin\gamma_e}{R}+o(\frac{1}{R}).
\]

Since the small-volume equilibrium branch depends smoothly on the
rescaled parameter \(R\), we may write
\[
V(R)=R^3\mathcal V(R),\qquad
\lambda(R)=R^{-1}\mathcal L(R),
\]
where \(\mathcal V,\mathcal L\in C^1\) near \(R=0\). The limiting
spherical-cap profile gives
\[
\mathcal V(0)
=
\frac{\pi(2-3\cos\gamma_e+\cos^3\gamma_e)}
{3\sin^3\gamma_e},
\qquad
\mathcal L(0)=2\sigma\sin\gamma_e .
\]
Therefore, after taking derivative with respect to $\lambda$, we have
\begin{align}{\label{eq:lambda_d}}
\frac{dV}{dR}
=
3\mathcal V(0)R^2+o(R^2),
\qquad
\frac{d\lambda}{dR}
=
-\mathcal L(0)R^{-2}+o(R^{-2}).
\end{align}
Consequently, equation \eqref{eq:lambda_d} implies
\begin{align}{\label{eq:V_d}}
\frac{dV}{d\lambda}
=
\frac{dV/dR}{d\lambda/dR}
=
-\frac{\pi(2-3\cos\gamma_e+\cos^3\gamma_e)}
{2\sigma\sin^4\gamma_e}R^4+o(R^4).
\end{align}
Since 
\[
2-3\cos\gamma_e+\cos^3\gamma_e
=
(1-\cos\gamma_e)^2(2+\cos\gamma_e)>0,
\]
the leading coefficient of $\frac{dV}{d\lambda}$ given by \eqref{eq:V_d} is strictly negative. Hence $\frac{dV}{d\lambda}\neq 0$
for all sufficiently small positive volumes.

\paragraph{\textbf{2. Large-volume regime.}}

As \(V\to\infty\), the sessile drop enters the pancake regime. The maximal
height satisfies
\begin{align}{\label{eq:aym_h}}
h(V)
=
\Pi_0\ell_c
+
\left(\frac{4\pi}{3}\right)^{1/2}
\Pi_1\ell_c^{5/2}V^{-1/2}
+
o(V^{-1/2}),
\end{align}
where
\[
\Pi_0=2\sin\frac{\gamma_e}{2},\quad\operatorname{and}\quad  \Pi_1
=
\left(\frac{2}{3}\right)^{1/2}
\frac{1-\cos^3(\gamma_e/2)}
{\sin^{1/2}(\gamma_e/2)}
>0.
\]
Moreover, in the pancake regime, we have
\[
\lambda(V)=g h(V)+o(V^{-1/2}).
\]
Using the asymptotic expansion for \(h(V)\) given by equation \eqref{eq:aym_h}, it follows that
\begin{align}{\label{eq:asy_lam}}
\lambda(V)=
2\sqrt{\sigma g}\sin\frac{\gamma_e}{2}
+
g\left(\frac{4\pi}{3}\right)^{1/2}
\Pi_1\ell_c^{5/2}V^{-1/2}
+
o(V^{-1/2}).
\end{align}

Consequently, combining equation \eqref{eq:aym_h} and equation \eqref{eq:asy_lam}, we obtain
\[
\frac{d\lambda}{dV}
=
-\frac12
g\left(\frac{4\pi}{3}\right)^{1/2}
\Pi_1\ell_c^{5/2}V^{-3/2}
+
o(V^{-3/2}).
\]
Since \(\Pi_1>0\), it follows that
\[
\frac{d\lambda}{dV}<0
\]
for sufficiently large \(V\). Hence $\frac{dV}{d\lambda}\neq0$
in the large-volume regime.

In the following part of the paper, we will keep the assumption that $\frac{dV}{d\lambda}\neq 0$.

\subsection{Construction of a homogeneous solution}
We now construct a homogeneous solution to the equation \(\mathcal J u=0\). From the previous discussion, we have obtained two inhomogeneous solutions. If these solutions are linearly independent, then a suitable linear combination can be chosen to eliminate the inhomogeneous term, thereby producing a nontrivial homogeneous solution. We formulate this observation in the following theorem.
\begin{theorem}{\label{thm:boundary_difference}}
    Suppose that \(\eta_{1}\) and \(\eta_{2}\) are the vertical translation mode and the \(\lambda\)-variation mode defined above, respectively. Define
\[
v_{1}:=\frac{\rho_{0}}{S_{0}}\eta_{1},
\qquad
v_{2}:=\frac{\rho_{0}}{S_{0}}\eta_{2}.
\]
Then \(v_{1}\) and \(v_{2}\) are linearly independent.

\end{theorem}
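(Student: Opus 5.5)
The plan is to argue by contradiction, using the boundary condition together with the linearized volume constraint and the nondegeneracy assumption $dV/d\lambda\neq0$. Suppose $v_2=\kappa v_1$ for some constant $\kappa$; the case $v_1\equiv 0$ is excluded since $v_1=\hat k\cdot\hat n$ equals $\cos\theta\,\rho_0/S_0=1$ at $\theta=0$. By Theorem~\ref{thm:z_translation_mode} and Theorem~\ref{thm:lambda_variation_mode} we have $\mathcal J v_1=g$ and $\mathcal J v_2=1$. Applying $\mathcal J$ to $v_2=\kappa v_1$ gives $1=\kappa g$, so $\kappa=1/g\neq0$. Equivalently, $\eta_2=\eta_1/g$, because $\rho_0/S_0>0$ on $[0,\pi/2]$.

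First I would test this identity against the boundary operator $\mathcal B$. By Property~1, $\mathcal B(\eta_2)=0$, so $\mathcal B(\eta_1)=0$ would follow. I would then compute $\mathcal B(\eta_1)$ directly from $\eta_1=\cos\theta+\frac{\rho_0'}{\rho_0}\sin\theta$. At $\theta=\pi/2$ one has $\eta_1=\rho_0'/\rho_0$ and $\eta_1'=\rho_0''/\rho_0-(\rho_0')^2/\rho_0^2-1$. Combining these with $\rho_0'/S_0=\cos\gamma_e$ at $\pi/2$, I would try to show $\mathcal B(\eta_1)\neq0$ generically. I expect this to be awkward, since $\rho_0''(\pi/2)$ enters and would have to be eliminated using the Young--Laplace equation at the contact line.

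So I would rely instead on the cleaner route, the volume constraint. By Theorem~\ref{thm:z_translation_mode},
\[
\int_0^{\pi/2}\rho_0^2\eta_1\sin\theta\,d\theta=\tfrac12\rho_0(\pi/2)^2>0 .
\]
Separately, differentiating $V(\lambda)=\frac{2\pi}{3}\int_0^{\pi/2}\rho_\lambda^3\sin\theta\,d\theta$ gives
\[
\int_0^{\pi/2}\rho_0^2\eta_2\sin\theta\,d\theta=\frac{1}{2\pi}\frac{dV}{d\lambda}.
\]
These two integrals are compatible with $\eta_2=\eta_1/g$ only if $\frac{dV}{d\lambda}=\pi\rho_0(\pi/2)^2/g$. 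This is a single numerical coincidence and not yet a contradiction.

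The main obstacle is therefore ruling out this coincidence. To handle it I would combine both conditions: $\eta_1=g\eta_2$ forces $\eta_1$ to satisfy $\mathcal B(\eta_1)=0$. The vertical translation preserves the contact angle but moves the contact line off the substrate. Hence I expect $\mathcal B(\eta_1)$ to reduce to an explicit nonzero quantity, something like $-\sin\gamma_e$ times a positive factor, once $\rho_0''(\pi/2)$ is replaced using the Euler--Lagrange equation at the equator. Establishing $\mathcal B(\eta_1)\neq0$ is the key computation, and it contradicts Property~1. If it turned out to vanish, I would fall back on comparing $\eta_1(0)=1$ with the value of $\eta_2(0)$ obtained from the $\lambda$-derivative of the apex height.
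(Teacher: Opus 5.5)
\textbf{Verdict: genuine gap.} Your reduction is the paper's. Dependence forces $\eta_2=\eta_1/g$ (from $\mathcal J v_1=g$ and $\mathcal J v_2=1$), and the contradiction must come from Property~1, i.e.\ from showing $\mathcal B(\eta_1)\neq0$. You are also right that the volume comparison only yields the numerical condition $dV/d\lambda=\pi\rho_0(\pi/2)^2/g$.

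The gap is that this decisive step is left as an expectation, and the fact that makes it true is missing. Your own boundary values already give, at $\theta=\pi/2$,
\[
\rho_0\eta_1'-\rho_0'\eta_1=\rho_0''-\rho_0-\frac{2(\rho_0')^2}{\rho_0}=-\frac{\rho_0^2+2(\rho_0')^2-\rho_0\rho_0''}{\rho_0}=-\frac{S_0^3}{\rho_0}\,\kappa_\theta .
\]
Using $\rho_0'/S_0=\cos\gamma_e$, one has $\mathcal B(\eta)=\frac{\rho_0^2}{S_0^3}(\rho_0\eta'-\rho_0'\eta)$ there. Hence
\[
\mathcal B(\eta_1)=-\rho_0(\pi/2)\,\kappa_\theta(\pi/2)=-\sin\gamma_e\,S_0(\pi/2)\,\kappa_\theta(\pi/2),
\]
where $\kappa_\theta$ is the meridional principal curvature. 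So your guessed shape is right, but the ``positive factor'' is the meridional curvature at the contact line.

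Substituting the Young--Laplace equation cannot show this factor is nonzero. It only rewrites $\mathcal B(\eta_1)$ as $\rho_0\left(1/S_0-\lambda_0/\sigma\right)=\rho_0(\kappa_\phi-H)$ at $\theta=\pi/2$. Its vanishing is not excluded by the equilibrium equation and the contact-angle condition alone. The paper supplies the missing geometric input: strict convexity of the sessile drop (Finn) gives $\kappa_\theta(\pi/2)>0$, hence $1/S_0(\pi/2)\neq\lambda_0/\sigma$ and $\mathcal B(\eta_1)\neq0$. Without invoking or proving this, your argument does not reach a contradiction.

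Your fallback does not close the gap either. Both $\eta_1$ and $g\eta_2$ are regular solutions of $L_0\eta=g\rho_0^2$, so their difference is a regular homogeneous solution, which is determined by its value on the axis. Thus $\eta_1(0)=g\eta_2(0)$ is equivalent to $\eta_1=g\eta_2$, the very identity you are trying to refute. By the Young--Laplace equation at the apex, $g\eta_2(0)=1-\sigma\,\partial_\lambda H(0)$, so the comparison just asks whether the apex curvature is stationary in $\lambda$, which is not known a priori.
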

\begin{proof}
    Recalling the definition of $v_{1}$ and $v_{2}$, by Theorem \ref{thm:z_translation_mode} and Theorem \ref{thm:lambda_variation_mode}, we have
\[
\mathcal{J}v_{1}=g,
\qquad
\mathcal{J}v_{2}=1.
\]
Therefore, since \(\rho_0/S_0\) is nonzero, \(v_1\) and \(v_2\) are linearly dependent if and only if \(\eta_1\) and \(\eta_2\) are linearly dependent. Equivalently, this occurs if and only if
\[
\eta_2=\frac{1}{g}\eta_1.
\]

On the other hand, the contact-angle condition
\[
\frac{\rho'}{\sqrt{\rho^2+(\rho')^2}}=\cos\gamma_{e}
\qquad\text{at }\theta=\frac{\pi}{2}
\]
is independent of \(\lambda\). Hence differentiating with respect to
\(\lambda\) gives
\[
\left.
D_\rho
\left(
\frac{\rho'}{\sqrt{\rho^2+(\rho')^2}}
\right)_{\rho=\rho_0}
[\eta_{2}]
\right|_{\theta=\pi/2}
=0.
\]
Since
\[
D_\rho
\left(
\frac{\rho'}{\sqrt{\rho^2+(\rho')^2}}
\right)_{\rho=\rho_0}
[\eta_{2}]
=
\frac{\rho_0(\rho_0\eta_{2}'-\rho_0'\eta_{2})}{S_0^3},
\]
we have
\[
\left.
\rho_0\eta_{2}'-\rho_0'\eta_{2}
\right|_{\theta=\pi/2}
=0.
\]

However, by the definition of vertical translation mode,
\begin{align}{\label{eq:bdd_z}}
\rho_0\eta_{1}'-\rho_0'\eta_{1}
=
\left(
\rho_0''-\rho_0-\frac{2(\rho_0')^2}{\rho_0}
\right)\sin\theta .
\end{align}
When $\theta=\frac{\pi}{2}$, applying Euler-Lagrange equation
\begin{align}
\frac{\rho_0^2+2(\rho_0')^2-\rho_0\rho_0''}{S_0^3}
+
\frac{\rho_0\sin\theta-\rho_0'\cos\theta}
{\rho_0 S_0\sin\theta}
=
\frac{\lambda_0-g\rho_0\cos\theta}{\sigma}.
\end{align}
to substitute for $\rho''$ in equation \eqref{eq:bdd_z}, we obtain
\begin{align}
    \rho_0\eta_1' - \rho_0'\eta_1
    =
    \frac{S_0^2}{\rho_0^2}
    \left(
        \rho_0\sin\theta-\rho_0'\cos\theta
    \right)
    -
    \frac{S_0^3\sin\theta}{\sigma\rho_0}
    \left(
        \lambda_0-g\rho_0\cos\theta
    \right).
\end{align}
At $\theta=\frac{\pi}{2}$, the equation above can be rewritten as
\begin{align}{\label{bdd_eta_1}}
    \rho_0\eta_1' - \rho_0'\eta_1
    =
    \frac{S_0^2}{\rho_0}
    -
    \frac{S_0^3}{\sigma\rho_0}
        \lambda_0.
\end{align}

Thus, using equation \eqref{bdd_eta_1}, we have 
\[
\left.
D_\rho
\left(
\frac{\rho'}{\sqrt{\rho^2+(\rho')^2}}
\right)_{\rho=\rho_{0}}
[\eta_1]
\right|_{\theta=\pi/2}
=\frac{\rho_{0}(\rho_{0}\eta_{1}'-\rho_{0}'\eta_{1})}{S_{0}^{3}}=
\frac1{S_0(\pi/2)}-\frac{\lambda_0}{\sigma}.
\]
Therefore, if
\[
\frac1{S_0(\pi/2)}\neq \frac{\lambda_0}{\sigma},
\]
then \(\eta_{1}\) does not satisfy the linearized contact-angle boundary
condition, whereas \(\eta_{2}\) does. Hence \(\eta_{1}\) and \(\eta_{2}\) are linearly
independent.

Suppose, for contradiction, that at the contact boundary \(\theta=\pi/2\), the
\(\phi\)-principal curvature satisfies
\begin{equation}\label{eq:k_p}
\kappa_\phi\left(\frac{\pi}{2}\right)
=
\frac{1}{S_0(\pi/2)}
=
\frac{\lambda_0}{\sigma}.
\end{equation}
On the other hand, evaluating the Euler--Lagrange equation at
\(\theta=\pi/2\) gives
\begin{equation}\label{eq:k}
H\left(\frac{\pi}{2}\right)
=
\frac{\lambda_0}{\sigma},
\end{equation}
where \(H=\kappa_\theta+\kappa_\phi\) under our convention. Combining
\eqref{eq:k_p} and \eqref{eq:k}, we obtain
\[
\kappa_\theta\left(\frac{\pi}{2}\right)=0.
\]
This contradicts the strict convexity of the sessile-drop equilibrium, which
implies that both principal curvatures are strictly positive; see
\cite{Finn}. Hence the assumption is false. Therefore, \(\eta_1\) and
\(\eta_2\) are linearly independent, and consequently \(v_1\) and \(v_2\)
are linearly independent as well.

\end{proof}

By this lemma, $\eta_{1}-g\eta_{2}$ is the nontrivial solution to the homogeneous equation $L_{0}\eta=0$. Next, we show the following two important properties for this function.

\begin{theorem}
    Suppose that \(\eta_{1}\) and \(\eta_{2}\) are the vertical translation mode and the \(\lambda\)-variation mode defined above, respectively. Let \(L_{0}\) be defined by \eqref{eq:L_0_1}, and set $\eta^{*}:=\eta_{1}-g\eta_{2}.$
Then \(\eta^{*}\) solves the homogeneous equation
\[
L_{0}\eta^{*}=0.
\]
Moreover, \(\eta^{*}\) satisfies the following nondegeneracy property:
\begin{align}
\mathcal{B}(\eta^{*})
:=
\frac{\rho_0^3}{S_0^3}\frac{\partial \eta^{*}}{\partial \theta}
+
\left(
\frac{(\rho_0')^3}{S_0^3}
-\cos\gamma_e
\right)\eta^{*}
\neq 0
\quad \text{at } \theta=\frac{\pi}{2}.
\end{align}
\end{theorem}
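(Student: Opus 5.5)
The proof splits into two parts: the homogeneous equation, which follows from linearity and the two inhomogeneous identities already established, and the boundary nondegeneracy, which reduces to the boundary computation made in the proof of Theorem~\ref{thm:boundary_difference}.

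\textbf{Homogeneous equation.} By Theorem~\ref{thm:J_e}, for any smooth axisymmetric $u_0$ we have $L_0u_0=\rho_0^2\mathcal J(\frac{\rho_0}{S_0}u_0)$. Theorem~\ref{thm:z_translation_mode} gives $\mathcal J v_1=g$, with $v_1=\frac{\rho_0}{S_0}\eta_1$, so $L_0\eta_1=g\rho_0^2$. Theorem~\ref{thm:lambda_variation_mode} gives $\mathcal J v_2=1$, with $v_2=\frac{\rho_0}{S_0}\eta_2$, so $L_0\eta_2=\rho_0^2$. Since $L_0$ is linear, $L_0\eta^*=g\rho_0^2-g\rho_0^2=0$. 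Theorem~\ref{thm:boundary_difference} guarantees $\eta^*\not\equiv0$, although this is not needed for the boundary statement.

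\textbf{Boundary nondegeneracy.} I will rewrite $\mathcal B$ at $\theta=\pi/2$ using the contact-angle condition $\rho_0'/S_0=\cos\gamma_e$ there. Then
\[
\frac{(\rho_0')^3}{S_0^3}-\cos\gamma_e=\cos\gamma_e(\cos^2\gamma_e-1)=-\cos\gamma_e\frac{\rho_0^2}{S_0^2},
\]
and therefore
\[
\mathcal B(\eta)=\frac{\rho_0^2}{S_0^3}\bigl(\rho_0\eta'-S_0\cos\gamma_e\,\eta\bigr)=\frac{\rho_0^2}{S_0^3}\bigl(\rho_0\eta'-\rho_0'\eta\bigr).
\]
So $\mathcal B(\eta)$ equals $\rho_0/S_0^{3}$ times $\rho_0\,D_\rho(\rho'/S)[\eta]\cdot S_0^3/\rho_0$; in other words, $\mathcal B$ is a nonzero multiple of the linearized contact-angle functional. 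This is consistent with \eqref{eq:linearized_contact_angle_mode}.

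Next I split $\eta^*$ into its two pieces. Property~1 (equivalently, the computation in the proof of Theorem~\ref{thm:boundary_difference}) gives $\rho_0\eta_2'-\rho_0'\eta_2=0$ at $\pi/2$, so $\mathcal B(\eta_2)=0$. Hence $\mathcal B(\eta^*)=\mathcal B(\eta_1)$. By \eqref{bdd_eta_1},
\[
\mathcal B(\eta_1)=\frac{\rho_0}{S_0^3}\cdot\frac{S_0^3}{\rho_0}\cdot\rho_0\Bigl(\frac{1}{S_0}-\frac{\lambda_0}{\sigma}\Bigr)\cdot\frac{S_0}{\rho_0},
\]
up to the positive factor $\rho_0^2/S_0^2$. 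It is therefore proportional to $\kappa_\phi(\pi/2)-\lambda_0/\sigma$.

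\textbf{Main obstacle.} The crux is showing $\kappa_\phi(\pi/2)\neq\lambda_0/\sigma$. I will argue exactly as at the end of Theorem~\ref{thm:boundary_difference}. The Euler--Lagrange equation at $\theta=\pi/2$, where $z=0$, gives $\kappa_\theta+\kappa_\phi=\lambda_0/\sigma$. Equality would then force $\kappa_\theta(\pi/2)=0$, contradicting strict convexity of the sessile drop, for which both principal curvatures are strictly positive (\cite{Finn}). The only delicate points are two bookkeeping checks: that the sign conventions of $H$ and of $\kappa_\phi=1/S_0$ at the equator match the Euler--Lagrange normalization, and that the positive prefactors $\rho_0^2/S_0^3$ are nonvanishing. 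Both follow from $\rho_0(\pi/2)>0$.
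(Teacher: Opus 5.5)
Your proposal is correct and follows the paper's own argument. Linearity gives $L_0\eta^*=g\rho_0^2-g\rho_0^2=0$, $\mathcal B(\eta_2)=0$ reduces $\mathcal B(\eta^*)$ to $\mathcal B(\eta_1)$, and the Euler--Lagrange equation at the equator combined with Finn's strict convexity ($\kappa_\theta(\pi/2)>0$) rules out $\mathcal B(\eta_1)=0$. Your identity $\mathcal B(\eta)=\frac{\rho_0^2}{S_0^3}(\rho_0\eta'-\rho_0'\eta)$ at $\theta=\pi/2$ is a useful addition, since it justifies a step the paper leaves implicit. Your displayed prefactor for $\mathcal B(\eta_1)$ is garbled, though harmlessly so: the clean value is $\mathcal B(\eta_1)=\rho_0\bigl(\frac{1}{S_0}-\frac{\lambda_0}{\sigma}\bigr)=-\rho_0\kappa_\theta$ at $\theta=\pi/2$.
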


\begin{proof}
 This result follows directly from the proof of Theorem~\ref{thm:boundary_difference}. Indeed, since $\mathcal{B}(\eta_{2})=0
\quad \text{and} \quad
\mathcal{B}(\eta_{1})\neq 0$,
we obtain $\mathcal{B}(\eta^{*})=\mathcal{B}(\eta_{1})\neq 0.$

\end{proof}

Moreover, under the extra assumption, this function $\eta^{*}$ does not satisfy the linearized volume constraint \eqref{eq:linearized_volume_spherical}.

\begin{theorem}[Failure of the volume constraint for $\eta^{*}$]
For simplicity, set 
\[
a:=\frac{\pi}{2},
\qquad
S_0:=\sqrt{\rho_0^2+(\rho_0')^2}.
\]
Let \(\eta_{1}\) and \(\eta_{2}\) be the functions defined in Theorem~\ref{thm:boundary_difference}, and define $\eta^*:=\eta_1-g\eta_2.$
Assume further that $\eta_2(a)\neq 0.$
Then \(\eta^*\) does not satisfy the linearized volume-conservation law. More precisely,
\[
\int_0^a \rho_0^2\eta^*\sin\theta~d\theta\neq 0.
\]
\end{theorem}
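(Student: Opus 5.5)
The plan is to compute the integral $\int_0^a\rho_0^2\eta^*\sin\theta\,d\theta$ explicitly, using two earlier facts plus the integrated Euler--Lagrange identity \eqref{eq:hard_1}. By linearity,
\[
\int_0^a\rho_0^2\eta^*\sin\theta\,d\theta=\int_0^a\rho_0^2\eta_1\sin\theta\,d\theta-g\int_0^a\rho_0^2\eta_2\sin\theta\,d\theta .
\]
The proof of Theorem~\ref{thm:z_translation_mode} already gives the first term: it equals $\tfrac12\rho_0(a)^2$. Write $R:=\rho_0(a)$; at $\theta=\pi/2$ the radial coordinate is exactly the contact radius.

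For the second term, I differentiate $V(\lambda)=\frac{2\pi}{3}\int_0^a\rho_\lambda^3\sin\theta\,d\theta$ in $\lambda$ at $\lambda_0$. Since $\eta_2=\chi=\partial_\lambda\rho_\lambda$, this gives
\[
\int_0^a\rho_0^2\eta_2\sin\theta\,d\theta=\frac{1}{2\pi}\frac{dV}{d\lambda}.
\]
This is the same computation behind Property~2. Hence
\[
\int_0^a\rho_0^2\eta^*\sin\theta\,d\theta=\frac{R^2}{2}-\frac{g}{2\pi}\frac{dV}{d\lambda}.
\]

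Next, I eliminate $dV/d\lambda$ using the integrated Young--Laplace identity. In the axisymmetric case, \eqref{eq:hard_1} reads $g V=\lambda\pi R^2-2\pi\sigma R\sin\gamma_e$. Differentiating along the branch, as already done in the derivation of \eqref{eq:hard_3}, gives
\[
gV'(\lambda)=\pi R^2+2\pi R\Bigl(\lambda-\frac{\sigma\sin\gamma_e}{R}\Bigr)R'(\lambda).
\]
The key observation is that $R(\lambda)=\rho_\lambda(\pi/2)$, so $R'(\lambda_0)=\eta_2(a)$. Substituting, the $\pi R^2$ contributions cancel exactly against $\tfrac12 R^2$, and I expect to obtain
\[
\int_0^a\rho_0^2\eta^*\sin\theta\,d\theta=-\bigl(\lambda_0R-\sigma\sin\gamma_e\bigr)\,\eta_2(a).
\]

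It then remains to show $\lambda_0R-\sigma\sin\gamma_e\neq0$. The same identity \eqref{eq:hard_1} gives
\[
\lambda_0R-\sigma\sin\gamma_e=\frac{gV}{\pi R}+\sigma\sin\gamma_e .
\]
This is strictly positive because $V>0$, $g,\sigma>0$ and $\gamma_e\in(0,\pi)$. Combined with the hypothesis $\eta_2(a)\ne0$, the integral is nonzero.

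The main obstacle I anticipate is justifying \eqref{eq:hard_1} and its $\lambda$-derivative for the actual spherical-coordinate branch. \eqref{eq:hard_1} was derived in the graph formulation, so I would invoke the convexity of the sessile drop \cite{Finn} to write the free surface as a graph over the disk $B_R$. I also need to check that the smooth dependence of $\rho_\lambda$ on $\lambda$ makes $R(\lambda)$ differentiable with $R'=\eta_2(a)$, with the contact angle held fixed along the branch. Alternatively, I could avoid the graph picture altogether: integrate the Jacobi equation $L_0\eta^*=0$ against the vertical/volume structure directly. I would try the graph route first, since the needed identities are already in the paper.
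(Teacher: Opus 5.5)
Your argument is correct and reaches the same closed form as the paper, by a different route.

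The paper never evaluates the two integrals separately. It applies Green's identity to
$\eta_2L_0\eta_1-\eta_1L_0\eta_2=\rho_0^2(g\eta_2-\eta_1)$, which reduces the integral to the Wronskian term $\sigma\frac{\rho_0^3}{S_0^3}(\eta_1\eta_2'-\eta_2\eta_1')$ at $\theta=a$. It then uses $\rho_0\eta_2'-\rho_0'\eta_2=0$ and the boundary formula $\rho_0\eta_1'-\rho_0'\eta_1=-\frac{S_0^3}{\rho_0}\kappa_\theta$ to obtain $-\sigma\rho_0(a)\eta_2(a)\kappa_\theta(a)$. Nonvanishing comes from strict convexity, i.e.\ $\kappa_\theta(a)\neq0$, imported from \cite{Finn}.

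Your expression $-(\lambda_0R-\sigma\sin\gamma_e)\eta_2(a)$ is the same number. At the contact line $z=0$, so $\sigma(\kappa_\theta+\kappa_\phi)=\lambda_0$, and $\kappa_\phi(a)=1/S_0(a)=\sin\gamma_e/R$.

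The two routes trade off as follows. The paper's argument is purely local: it uses only the ODE and boundary data at $\theta=a$, with no global identity. Yours needs the branch-wide identity \eqref{eq:hard_1} and differentiability of $R(\lambda)=\rho_\lambda(\pi/2)$. In exchange, it proves the nonvanishing of the prefactor outright rather than citing convexity. In effect you show $\sigma\kappa_\theta(a)=\frac{gV}{\pi R^2}+\frac{\sigma\sin\gamma_e}{R}>0$.

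One point needs fixing. Your planned justification of \eqref{eq:hard_1} (convexity implies a graph over $B_R$) only works for $\gamma_e<\pi/2$. For $\gamma_e\geq\pi/2$ the free surface has a vertical tangent at, or overhangs, its contact circle, so it is not a graph over the wetted disk.

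Instead, derive \eqref{eq:hard_1} as a vertical force balance by integrating $(\sigma H+gz-\lambda)\nu_3$ over $\Sigma_0$:
\begin{itemize}
\item With $X$ the position vector, $\Delta_{\Sigma_0}X=-H\nu$. This gives $\int_{\Sigma_0}H\nu_3\,dA=-\int_{\partial\Sigma_0}\mu_3\,ds=2\pi R\sin\gamma_e$, since the outward conormal $\mu$ at the contact line has $\mu_3=-\sin\gamma_e$.
\item The divergence theorem on the drop region gives $\int_{\Sigma_0}\nu_3\,dA=\pi R^2$ and $\int_{\Sigma_0}z\nu_3\,dA=V$.
\end{itemize}
This holds for every $\gamma_e\in(0,\pi)$ and every $\lambda$ on the branch, so your differentiation step and the positivity $\lambda_0R-\sigma\sin\gamma_e=\frac{gV}{\pi R}+\sigma\sin\gamma_e>0$ go through unchanged.
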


\begin{proof}
By the definition of $\eta_{1}$ and $\eta_{2}$,
\[
L_0\eta_{1}=g\rho_0^2,
\qquad
L_0\eta_{2}=\rho_0^2,
\]
we obtain
\[
\eta_{2} L_0\eta_{1}-\eta_{1}L_0\eta_{2}
=
\rho_0^2(g\eta_{2}-\eta_{1}).
\]
Multiplying by $\sin\theta$ and integrating over $[0,a]$ yields
\begin{align}{\label{eq:volume_0}}
\int_0^a \rho_0^2(g\eta_{2}-\eta_{1})\sin\theta\,d\theta
=
\int_0^a
\left(
\eta_{2} L_0\eta_{1}-\eta_1L_0\eta_{2}
\right)
\sin\theta\,d\theta.
\end{align}

Using the self-adjoint form of $L_0$ and Green's identity, the right-hand side of \eqref{eq:volume_0} can be rewritten as
\begin{align}{\label{eq:volume_1}}
\int_0^a
\left(
\eta_{2}L_0\eta_1-\eta_1L_0\eta_{2}
\right)
\sin\theta\,d\theta
=
\sigma
\left[
\sin\theta
\frac{\rho_0^3}{S_0^3}
\left(
\eta_{1}\eta_{2}'-\eta_{2}\eta_1'
\right)
\right]_0^a.
\end{align}
Since $\sin0=0$, the contribution at $\theta=0$  in equation \eqref{eq:volume_1} vanishes, and hence, substituting \eqref{eq:volume_1} into equation \eqref{eq:volume_0}, we obtain 
\begin{align}{\label{eq:volume_2}}
\int_0^a \rho_0^2(g\eta_{2}-\eta_{1})\sin\theta\,d\theta
=
\sigma
\frac{\rho_0^3(a)}{S_0^3(a)}
\left(
\eta_1(a)\eta_{2}'(a)-\eta_{2}(a)\eta_{1}'(a)
\right).
\end{align}

We now eliminate the derivative term on the right-hand side of \eqref{eq:volume_2}. First, because $\eta_{2}$ is obtained by differentiating the fixed contact-angle condition, by equation \eqref{eq:kernel_bdd}, it satisfies
\[
\rho_0(a)\eta_{2}'(a)-\rho_0'(a)\eta_{2}(a)=0,
\]
so that
\[
\eta_{2}'(a)=\frac{\rho_0'(a)}{\rho_0(a)}\eta_{2}(a).
\]
Therefore, the right-hand side of equation \eqref{eq:volume_2} can be rewritten as
\begin{align}{\label{eq:volume_3}}
\begin{aligned}
\eta_1(a)\eta_{2}'(a)-\eta_{2}(a)\eta_{1}'(a)
&=
\eta_{2}(a)
\left(
\frac{\rho_0'(a)}{\rho_0(a)}\eta_{1}(a)-\eta_{1}'(a)
\right)
\\
&=
\frac{\eta_{2}(a)}{\rho_0(a)}
\left(
\rho_0'(a)\eta_1{}(a)-\rho_0(a)\eta_{1}'(a)
\right).
\end{aligned}
\end{align}
For the term $\left(
\rho_0'(a)\eta_1{}(a)-\rho_0(a)\eta_{1}'(a)
\right)$, using equation \eqref{eq:bdd_z} in Theorem 4.5, we have
\begin{align}
\rho_0\eta_{1}'-\rho_0'\eta_{1}
&=
\left(
\rho_0''-\rho_0-\frac{2(\rho_0')^2}{\rho_0}
\right)\sin\theta.
\end{align}
At the point $\theta=a$, the equation above yields
\begin{align}{\label{eq:volume_4}}
\rho_0(a)\eta_{1}'(a)-\rho_0'(a)\eta_{1}(a)
=
\rho_0''(a)-\rho_0(a)-\frac{2(\rho_0'(a))^2}{\rho_0(a)}=\rho_0(a)\varphi_z'(a)-\rho_0'(a)\varphi_z(a)
=
-\frac{S_0^3(a)}{\rho_0(a)}\kappa_\theta(a).
\end{align}
Therefore, applying equation \eqref{eq:volume_4} to equation \eqref{eq:volume_3}, we have
\begin{align}{\label{eq:volume_5}}
\eta_{1}(a)\eta_{2}'(a)-\eta_{2}(a)\eta_{1}'(a)
=
\frac{\eta_{2}(a)S_0^3(a)}{\rho_0^2(a)}
\kappa_\theta(a).
\end{align}
Substituting equation \eqref{eq:volume_5} into the Green identity \eqref{eq:volume_2} yields
\begin{align}{\label{eq:volume_6}}
\begin{aligned}
\int_0^a \rho_0^2(g\eta_{2}-\eta_{1})\sin\theta\,d\theta
&=
\sigma
\frac{\rho_0^3(a)}{S_0^3(a)}
\frac{\eta_{2}(a)S_0^3(a)}{\rho_0^2(a)}
\kappa_\theta(a)=
\sigma \rho_0(a)\eta_{2}(a)\kappa_\theta(a).
\end{aligned}
\end{align}

Since $\eta^*=\eta_1-g\eta_{2},$
we conclude from \eqref{eq:volume_6} that
\[
\int_0^a \rho_0^2\eta^*\sin\theta\,d\theta
=
-\sigma \rho_0(a)\eta_{2}(a)\kappa_\theta(a).
\]
Because
\[
\eta_{2}(a)\neq0,
\qquad
\kappa_\theta(a)\neq0,
\qquad
\rho_0(a)>0,
\]
it follows that
\[
\int_0^a \rho_0^2\eta^*\sin\theta\,d\theta\neq0.
\]
Hence $\eta^*$ does not satisfy the linearized volume conservation law.
\end{proof}

We have now constructed a homogeneous solution of \(L_{0}\), denoted by \(\eta^{*}\), and established its properties in the two theorems above. Since $L_{0}\eta=0$
is a second-order ordinary differential equation, its solution space is two-dimensional. Therefore, we next apply a multiplier method to construct another linearly independent solution, thereby obtaining a basis for the kernel of \(L_{0}\).

\begin{theorem}
    Any homogeneous solution to the equation 
    \[
L_{0}(\eta)=0
\] has the following form
    \begin{align}
\boxed{
\eta(\theta)
=
C_1\eta^{*}
+
C_2\eta^{*}
\int_{\theta_*}^{\theta}
\frac{S_0^3(s)}
{\sin s\,\rho_0^3(s)
\bigl(\eta^{*}(s)\bigr)^2}
\,ds .
}
    \end{align}
    for arbitrary constants $C_{1}$ and $C_{2}$.
\end{theorem}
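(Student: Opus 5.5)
The plan is reduction of order for the self-adjoint second-order operator
\[
L_0\eta=-\frac{\sigma}{\sin\theta}\bigl(p\,\eta'\bigr)'+Q\eta,\qquad p(\theta):=\sin\theta\,\frac{\rho_0^3}{S_0^3},
\]
using the nontrivial solution $\eta^{*}$ constructed above.

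Since $L_0\eta=0$ is a linear second-order ODE with smooth coefficients on any interval where $\sin\theta\neq0$, its solution space is two-dimensional. It therefore suffices to produce a second solution $\eta_{**}$ that is linearly independent of $\eta^{*}$.

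I will look for a solution in the form $\eta=\eta^{*}w$. Expanding with the product rule and using $L_0\eta^{*}=0$, the $Q$-term cancels and we obtain
\[
\sin\theta\, L_0(\eta^{*}w)=-\sigma\Bigl[\bigl(p\,\eta^{*}w'\bigr)'+p\,(\eta^{*})'w'\Bigr].
\]
Multiplying the bracket by $\eta^{*}$ turns it into $\bigl(p\,(\eta^{*})^2w'\bigr)'$. Hence $L_0(\eta^{*}w)=0$ reduces to
\[
p\,(\eta^{*})^2w'=C_2 .
\]
This gives
\[
w'=\frac{C_2S_0^3}{\sin\theta\,\rho_0^3(\eta^{*})^2},
\]
and integrating from a base point $\theta_*$ yields exactly the boxed formula, with $C_1$ the integration constant.

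Equivalently, I can phrase this through the Wronskian. Abel's identity gives $p\,W(\eta^{*},\eta)=\text{const}$ for any two solutions $\eta^{*},\eta$. Dividing by $(\eta^{*})^2$ then shows
\[
\Bigl(\frac{\eta}{\eta^{*}}\Bigr)'=\frac{C}{p\,(\eta^{*})^2}.
\]
This form shows directly that every solution arises this way. Linear independence of $\eta^{*}$ and $\eta_{**}$ holds because $W(\eta^{*},\eta_{**})=C_2/p\neq0$ when $C_2\neq0$.

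The main obstacle is the validity of the formula where $\eta^{*}$ vanishes or where $\sin\theta=0$. The integrand is singular at zeros of $\eta^{*}$ and at $\theta=0$.

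For the first issue, I would work on each subinterval between consecutive zeros of $\eta^{*}$, choosing $\theta_*$ inside it. Zeros of a nontrivial solution are simple, hence isolated. The Wronskian identity is global, so across a zero the solution extends smoothly by the ODE, and the representation holds piecewise with appropriate constants.

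For the second issue, near $\theta=0$ we have $p\sim\rho_0(0)^{3}\theta/S_0(0)^3$, so the second solution behaves like $\log\theta$. I will note this: it is the singular solution that is later discarded by $H^1$-regularity at the pole. The statement itself only asserts the general form on $(0,\pi/2]$, so it suffices to take $\theta_*\in(0,\pi/2]$ away from the zeros of $\eta^{*}$.
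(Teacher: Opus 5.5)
Your proposal is correct and follows the paper's proof: reduction of order with $\eta=\eta^{*}M$, cancellation of the $Q$-term via $L_0\eta^{*}=0$, the first integral $p(\eta^{*})^2M'=\text{const}$, and the caveat that the formula holds only on nodal intervals of $\eta^{*}$, with $\theta_*$ chosen away from $\theta=0$. Your Abel/Wronskian remark adds a little: it shows directly that every solution has this form, whereas the paper constructs a second solution and appeals to the two-dimensionality of the solution space without explicitly checking linear independence.
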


\begin{proof}
Recall the definition of $L_{0}$,
\[
L_0 \eta
=
-\frac{\sigma}{\sin\theta}
\frac{d}{d\theta}
\left(
\sin\theta \frac{\rho_0^3}{S_0^3}\eta'
\right)
+
Q(\theta)\eta .
\]
Suppose that \(\eta^{*}\) is a nontrivial solution of the homogeneous equation
\[
L_0\eta^{*}=0.
\]
We now construct a second linearly independent solution by the multiplier
method. Set
\[
\eta(\theta)=\eta^{*}(\theta)M(\theta).
\]

Define $p(\theta)
:=
\sin\theta \frac{\rho_0^3}{S_0^3}.$
Then the homogeneous equation \(L_0\eta=0\) is equivalent to
\[
-\sigma (p\eta')' + Q(\theta)\sin\theta\,\eta=0.
\]
Since \(\eta^{*}\) satisfies the homogeneous equation, we have
\[
-\sigma (p(\eta ^{*})')'
+
Q(\theta)\sin\theta\,\eta^{*}
=0.
\]

Now for $\eta=M\eta^{*}$, we compute
\[
p\eta'
=
p(\eta^{*})'M+p\eta^{*}M',
\]
and hence
\[
(p\eta')'
=
(p(\eta^{*})')'M
+
p(\eta^{*})'M'
+
(p\eta^{*}M')'.
\]
Substituting this into the equation \(L_0\eta=0\), we obtain
\begin{align}{\label{eq:solution_0}}
\begin{aligned}
0
&=
-\sigma(p\eta')'
+
Q(\theta)\sin\theta\,\eta
\\
&=
-\sigma
\left[
(p(\eta ^{*})')'M
+
p(\eta^{*})'M'
+
(p\eta^{*}M')'
\right]
+
Q(\theta)\sin\theta\,\eta^{*}M .
\end{aligned}
\end{align}
Using the fact that \(L_{0}\eta^{*}=0\), the terms involving \(M\) cancel. Thus, equation~\eqref{eq:solution_0} can be rewritten as
\begin{align}{\label{eq:solution_1}}
p(\eta^{*})'M'
+
(p\eta^{*}M')'
=0.
\end{align}

Expanding the second term of equation \eqref{eq:solution_1} gives
\begin{align}{\label{eq:solution_2}}
p(\eta^{*})'M'
+
(p\eta^{*})'M'
+
p\eta^{*}M''
=0.
\end{align}
Since
\[
(p\eta^{*})'
=
p'\eta^{*}+p(\eta^{*})',
\]
we obtain the following equation from equation \eqref{eq:solution_2}
\begin{align}{\label{eq:solution_3}}
p\eta^{*}M''
+
\left(
p'\eta^{*}+2p(\eta^{*})'
\right)M'
=0.
\end{align}
Equivalently,
\[
\frac{d}{d\theta}
\left(
p(\eta^{*})^2M'
\right)
=0.
\]
Solving the ordinary differential equation above, we obtain
\[
p(\theta)(\eta^{*}(\theta))^2M'(\theta)=C.
\]
Taking \(C=1\), we obtain the following representation for $M^{'}$ from the equation above
\begin{align}{\label{eq:M}}
M'(\theta)
=
\frac{1}{p(\theta)(\eta^{*}(\theta))^2}.
\end{align}
Therefore, integrating equation \eqref{eq:M} from $\theta^{*}$ to an arbitrary $\theta\in (0,\frac{\pi}{2})$, we obtain
\begin{align}
M(\theta)
=
\int_{\theta_*}^{\theta}
\frac{1}{p(s)(\eta^{*}(s))^2}\,ds .
\end{align}

Thus a second homogeneous solution can be expressed as 
\begin{align}{\label{eq:eta_a}}
\widetilde{\eta}(\theta)
=
\eta^{*}(\theta)
\int_{\theta_*}^{\theta}
\frac{1}{p(s)(\eta^{*}(s))^2}\,ds .
\end{align}
Substituting the definition of $p$
\[
p(s)=\sin s\,\frac{\rho_0^3(s)}{S_0^3(s)},
\]
into equation \eqref{eq:eta_a}, we may write
\begin{align}{\label{eq:eta_b}}
\widetilde{\eta }(\theta)
=
\eta^{*}(\theta)
\int_{\theta_*}^{\theta}
\frac{S_0^3(s)}
{\sin s\,\rho_0^3(s)(\eta^{*}(s))^2}
\,ds .
\end{align}

Consequently, the general local solution of $L_0\eta=0$
is
\begin{align}{\label{eq:eta_c}}
\eta(\theta)
=
C_1\eta^{*}(\theta)
+
C_2\eta^{*}(\theta)
\int_{\theta_*}^{\theta}
\frac{S_0^3(s)}
{\sin s\,\rho_0^3(s)(\eta^{*}(s))^2}
\,ds .
\end{align}
In particular, since
\[
\eta^{*}=\eta_{1}-g\eta_{2},
\]
we obtain the final result from equation \eqref{eq:eta_c}
\[
\boxed{
\eta(\theta)
=
C_1\bigl(\eta_{1}(\theta)-g\eta_{2}(\theta)\bigr)
+
C_2\bigl(\eta_{1}(\theta)-g\eta_{2}(\theta)\bigr)
\int_{\theta_*}^{\theta}
\frac{S_0^3(s)}
{\sin s\,\rho_0^3(s)
\bigl(\eta_{1}(s)-g\eta_{2}(s)\bigr)^2}
\,ds .
}
\]

This representation is valid on any interval on which \(\eta^{*}\neq0\). If \(\eta^{*}\)
has zeros, the formula is applied separately on each nodal interval.
\end{proof}

 We have now derived the representation for any homogeneous solution to the ODE $L_{0}\eta=0$. We next use the following theorem to show the regularity of this solution.

\begin{theorem}[Non-admissibility of the second homogeneous solution]
Suppose $\eta^{**}$ is the second basis element of the solution space of homogeneous equation defined by
\begin{align}
    \eta^{**}:=\bigl(\eta_{1}(\theta)-g\eta_{2}(\theta)\bigr)
\int_{\theta_*}^{\theta}
\frac{S_0^3(s)}
{\sin s\,\rho_0^3(s)
\bigl(\eta_{1}(s)-g\eta_{2}(s)\bigr)^2}
\,ds .
\end{align}

Then, near \(\theta=0\),
\[
\eta^{**}(\theta)
=
\frac{1}{\eta^*(0)}\log\theta+O(1),
\]
\[
(\eta^{**})'(\theta)
=
\frac{1}{\eta^*(0)}\frac1\theta+O(1).
\]
Consequently, if
\[
\eta(\theta)=C_1\eta^*(\theta)+C_2\eta^{**}(\theta)
\]
with \(C_2\neq0\), then \(\eta\notin H^1\) in the natural axisymmetric surface
energy space. In particular,
\[
\eta\notin H^1((0,\frac{\pi}{2});\,\sin\theta\,d\theta),
\]
and hence \(\eta\) is not an admissible \(H^1\) perturbation.
\end{theorem}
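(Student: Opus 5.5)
The plan is a local Frobenius-type expansion of the reduction-of-order integrand at the pole $\theta=0$, followed by a direct check that the $1/\theta$ singularity of the derivative is not square integrable against $\sin\theta\,d\theta$.

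\textbf{Step 1: regularity of $\eta^*$ at the pole.} Since $\rho_0$ is smooth and axisymmetric, $\rho_0'(0)=0$, and hence $S_0(0)=\rho_0(0)>0$. Then $\eta_1=\cos\theta+\frac{\rho_0'}{\rho_0}\sin\theta$ is smooth on $[0,\pi/2]$ and satisfies $\eta_1'(0)=0$. Likewise $\eta_2=\partial_\lambda\rho_\lambda$ is smooth, being the derivative of a smooth branch of smooth axisymmetric profiles, so $\eta_2'(0)=0$. Consequently $\eta^*=\eta_1-g\eta_2$ is smooth, with $\eta^*(\theta)=\eta^*(0)+O(\theta^2)$ and $(\eta^*)'=O(\theta)$.

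\textbf{Step 2: $\eta^*(0)\neq 0$.} I expect this to be the main obstacle, because the statement divides by $\eta^*(0)$. My argument uses the structure of $L_0\eta=0$ near $\theta=0$, written as $-\sigma(p\eta')'+Q\sin\theta\,\eta=0$ with $p=\sin\theta\,\rho_0^3/S_0^3\sim\theta$. This is a regular singular point with indicial equation $r^2=0$, so the indices are $0,0$. Hence there is a one-dimensional space of solutions bounded at $0$, and each such solution is uniquely determined by its value at $\theta=0$; the other solution behaves like $\log\theta$. Equivalently, $p\,W(\eta^*,\eta)$ is constant for any two solutions, and it must vanish as $\theta\to 0$ whenever both solutions are $C^1$ up to $0$.

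Now suppose $\eta^*(0)=0$. The bounded solution through that value is then identically zero, so $\eta^*\equiv0$. This contradicts $\mathcal B(\eta^*)\neq0$ from the preceding theorem. Therefore $\eta^*(0)\neq0$, and by continuity $\eta^*\neq0$ on some interval $(0,\theta_0]$. We may take $\theta_*$ in this interval; changing $\theta_*$ only adds a multiple of $\eta^*$, which is absorbed into the $O(1)$ terms.

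\textbf{Step 3: asymptotics.} Using Steps 1--2 together with $\sin s=s+O(s^3)$ and $S_0^3/\rho_0^3=1+O(s^2)$, the integrand expands as
\[
\frac{S_0^3}{\sin s\,\rho_0^3(\eta^*)^2}=\frac{1}{\eta^*(0)^2\,s}+O(s).
\]
Integrating from $\theta_*$ gives $M(\theta)=\eta^*(0)^{-2}\log\theta+O(1)$. Multiplying by $\eta^*=\eta^*(0)+O(\theta^2)$, and using that $\theta^2\log\theta$ is bounded, yields $\eta^{**}=\eta^*(0)^{-1}\log\theta+O(1)$.

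For the derivative, $(\eta^{**})'=(\eta^*)'M+\eta^*M'$. The first term is $O(\theta\log\theta)=O(1)$. The second is $1/(p\,\eta^*)=\frac{1}{\eta^*(0)\theta}+O(1)$, which gives the stated expansion.

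\textbf{Step 4: non-admissibility.} Let $\eta=C_1\eta^*+C_2\eta^{**}$ with $C_2\neq0$. Then $\eta'=\frac{C_2}{\eta^*(0)\theta}+O(1)$ near $0$, so
\[
\int_0^{\theta_0}|\eta'|^2\sin\theta\,d\theta\gtrsim\int_0^{\theta_0}\frac{d\theta}{\theta}=\infty.
\]
Hence $\eta\notin H^1((0,\pi/2);\sin\theta\,d\theta)$. Note that $\log\theta$ itself is square integrable against $\theta\,d\theta$, so the obstruction comes only from the gradient. Finally, the surface $H^1(\mathbb S^2_+)$ norm of an axisymmetric function equals $2\pi$ times this weighted norm, so $\eta$ is not an admissible perturbation.
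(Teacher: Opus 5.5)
Your proposal is correct and follows the paper's proof essentially step for step. Both arguments use the even expansion of $\eta^*$ at the axis, show $\eta^*(0)\neq0$ by uniqueness of the regular solution at the regular singular point, expand the reduction-of-order integrand as $\frac{1}{\eta^*(0)^2 s}+O(s)$, and conclude from the $1/\theta$ term that $\int_0^{\varepsilon}|\eta'|^2\sin\theta\,d\theta$ diverges. Two of your points sharpen things the paper only asserts: the indicial-root/Wronskian justification of $\eta^*(0)\neq0$ (with nontriviality taken from $\mathcal B(\eta^*)\neq0$), and placing $\theta_*$ inside the nodal interval of $\eta^*$ adjacent to $0$.
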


\begin{proof}
Since \(\rho_0\) is smooth at the axis,
\[
\rho_0(0)>0,
\qquad
\rho_0'(0)=0,
\]
we have $S_0(0)=\rho_0(0).$
Therefore, 
\[
\frac{\rho_0^3}{S_0^3}=1+O(\theta^2),
\]
and hence
\[
p(\theta)
=
\sin\theta\frac{\rho_0^3}{S_0^3}
=
\theta+O(\theta^3)
\qquad\text{as }\theta\to0.
\]

Since \(\eta^*\) is regular and axisymmetric, it has an even expansion at the
axis:
\[
\eta^*(\theta)=\eta^*(0)+O(\theta^2),
\qquad
(\eta^*)'(\theta)=O(\theta).
\]
Moreover, because \(\eta^*\) is a nontrivial regular solution of the homogeneous
equation, we must have $\eta^*(0)\neq0.$
Indeed, the regular axisymmetric solution is determined by its value at the
axis together with the condition \(\eta'(0)=0\). If \(\eta^*(0)=0\), then
\[
\eta^*(0)=(\eta^*)'(0)=0,
\]
and uniqueness for the regular singular Sturm--Liouville problem implies $\eta^*\equiv0,$
contradicting the assumption.

Now consider
\begin{align}{\label{eq:eta_second}}
\eta^{**}(\theta)
=
\eta^*(\theta)
\int_{\theta_*}^{\theta}
\frac{1}{p(s)(\eta^*(s))^2}\,ds .
\end{align}
Near \(\theta=0\), using
\[
p(s)=s+O(s^3) \quad\operatorname{and} \quad \eta^*(s)=\eta^*(0)+O(s^2),
\]
we obtain
\begin{align}{\label{eq:homogeneous_0}}
\frac{1}{p(s)(\eta^*(s))^2}
=
\frac{1}{(\eta^*(0))^2}\frac1s+O(s).
\end{align}
Therefore, integrating equation \eqref{eq:homogeneous_0} from arbitrary $\theta^{*}\in(0,\frac{\pi}{2})$ to a point $\theta$ close to $0$, we obtain
\begin{align}{\label{eq:homogeneous_1}}
\int_{\theta_*}^{\theta}
\frac{1}{p(s)(\eta^*(s))^2}\,ds
=
\frac{1}{(\eta^*(0))^2}\log\theta+O(1).
\end{align}
Multiplying equation \eqref{eq:homogeneous_1} by $\eta^{*}$, 
we derive
\[
\eta^{**}(\theta)
=
\frac{1}{\eta^*(0)}\log\theta+O(1)\qquad \operatorname{as} \quad\theta\rightarrow 0.
\]

Differentiating equation \eqref{eq:eta_second} with respect to $\theta$, we obtain
\begin{align}{\label{eq:homogeneous_2}}
(\eta^{**})'
=
(\eta^*)'
\int_{\theta_*}^{\theta}
\frac{1}{p(s)(\eta^*(s))^2}\,ds
+
\eta^*
\frac{1}{p(\theta)(\eta^*(\theta))^2}.
\end{align}
Using the fact that ${\eta^{*}}'(0)=0$, the first term has the following asymptotic behavior
\[
O(\theta\log\theta).
\]
For the second term, we have
\[
\eta^*(\theta)
\frac{1}{p(\theta)(\eta^*(\theta))^2}=\frac{1}{p(\theta)\eta^{*}(\theta)}=\frac{1}{\eta^*(0)}\frac1\theta+O(1).
\]
Hence, applying these two asymptotic estimates to equation \eqref{eq:homogeneous_2}, we obtain
\begin{align}{\label{est:homogeneous_3}}
(\eta^{**})'(\theta)
=
\frac{1}{\eta^*(0)}\frac1\theta+O(1)
\end{align}
as $\theta\rightarrow0$.

Now let
\[
\eta=C_1\eta^*+C_2\eta^{**},
\qquad C_2\neq0.
\]
Then, taking derivative with respect to $\theta$ and applying estimate \eqref{est:homogeneous_3}, we have
\[
\eta'(\theta)
=
C_1(\eta^*)'(\theta)+C_2(\eta^{**})'(\theta)
=
\frac{C_2}{\eta^*(0)}\frac1\theta+O(1).
\]
Therefore,
\begin{align}{\label{eq:homogeneous_4}}
|\eta'(\theta)|^2\sin\theta
\sim
\frac{|C_2|^2}{|\eta^*(0)|^2}\frac1{\theta^2}\theta
=
\frac{|C_2|^2}{|\eta^*(0)|^2}\frac1\theta.
\end{align}

Thus, integrating equation \eqref{eq:homogeneous_4} from $0$ to some small constant $\epsilon>0$, we have
\begin{align}{\label{eq:homogeneous_5}}
\int_0^\varepsilon |\eta'(\theta)|^2\sin\theta\,d\theta
=
+\infty.
\end{align}
Consequently,
\[
\eta_\theta\notin L^2((0,a);\sin\theta\,d\theta).
\]

In the natural axisymmetric surface \(H^1\) norm,
\[
\|\eta\|_{H^1(\Sigma_0)}^2
\sim
\int_0^a
\left(
|\eta|^2 \rho_0S_0\sin\theta
+
|\eta'|^2\frac{\rho_0}{S_0}\sin\theta
\right)d\theta,
\]
up to the factor \(2\pi\). Since
\[
\frac{\rho_0}{S_0}\to1
\qquad\text{as }\theta\to0,
\]
the divergence \eqref{eq:homogeneous_5} above implies
\[
\|\eta\|_{H^1(\Sigma_0)}=+\infty.
\]
Therefore, $\eta\notin H^1(\Sigma_0),$
and the coefficient of the second homogeneous solution must vanish for any admissible \(H^1\) perturbation.
\end{proof}

By the theorem above, if \(\eta\in H^{1}(\Sigma_{0})\) satisfies
\[
L_{0}\eta=c\rho_{0}^{2}
\]
for some constant \(c\), then \(\eta\) admits the representation
\[
\eta=C_{1}\eta_{2}+C_{2}\eta^{*}.
\]

We now establish the final theorem concerning the kernel of \(L_{0}\) in the Fourier mode \(m=0\). In particular, we prove that \(L_{0}\) admits only the trivial kernel function.

\begin{theorem}[Triviality of the constrained kernel]
Consider the axisymmetric operator
\[
L_0\eta
=
-\frac{\sigma}{\sin\theta}
\frac{d}{d\theta}
\left(
\sin\theta\frac{\rho_0^3}{S_0^3}\eta'
\right)
+
Q(\theta)\eta .
\]
Suppose that \(\eta\in H^1(\Sigma_{0})\) satisfies
\[
L_0\eta=c\rho_0^2
\]
for some constant \(c\), together with the boundary condition
\[
\mathcal B[\eta]=0
\qquad\text{at } \theta=\frac{\pi}{2},
\]
and the linearized volume constraint
\[
\int_0^a \rho_0^2\eta\sin\theta\,d\theta=0.
\]
Assume moreover that
\[
\frac{dV}{d\lambda}|_{\lambda=\lambda_0}=2\pi\int_0^a \rho_0^2\eta_{2}\sin\theta\,d\theta\neq0,
\]
Then
\[
\eta\equiv0
\qquad\text{and}\qquad
c=0.
\]
\end{theorem}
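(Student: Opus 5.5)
We use the structure already established for the $m=0$ equation: a particular solution, the homogeneous solutions, and the elimination of the singular one. Since $L_0\eta_2=\rho_0^2$, the function $\eta-c\eta_2$ solves $L_0(\eta-c\eta_2)=0$. Both $\eta$ and $\eta_2$ lie in $H^1(\Sigma_0)$; $\eta_2$ is smooth because it is the $\lambda$-derivative of a smooth branch. By the non-admissibility theorem for $\eta^{**}$, the coefficient of the singular homogeneous solution must vanish. Hence
\[
\eta=c\,\eta_2+C\,\eta^{*}
\]
for some constant $C$, which is the representation recorded just before the statement.

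\textbf{Step 1: the contact-angle condition forces $C=0$.} The operator $\mathcal B$ is linear and $\mathcal B(\eta_2)=0$ by Property~1. The theorem following Theorem~\ref{thm:boundary_difference} gives $\mathcal B(\eta^{*})\neq0$. Therefore
\[
0=\mathcal B[\eta]=c\,\mathcal B(\eta_2)+C\,\mathcal B(\eta^{*})=C\,\mathcal B(\eta^{*}),
\]
so $C=0$ and $\eta=c\,\eta_2$.

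\textbf{Step 2: the volume constraint forces $c=0$.} Inserting $\eta=c\,\eta_2$ into the linearized volume constraint gives
\[
0=\int_0^a\rho_0^2\eta\sin\theta\,d\theta=\frac{c}{2\pi}\,\frac{dV}{d\lambda}\Big|_{\lambda=\lambda_0}.
\]
Here we use Property~2, i.e.\ differentiation of $V=\frac{2\pi}{3}\int_0^a\rho_\lambda^3\sin\theta\,d\theta$ in $\lambda$, with the normalization stated in the hypothesis. Since $dV/d\lambda\neq0$, we get $c=0$ and hence $\eta\equiv0$. Note that the assumption $\eta_2(a)\neq0$ used in the theorem on failure of the volume constraint for $\eta^*$ is not needed here: the argument uses $\eta^*$ only through its boundary nondegeneracy.

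\textbf{Main obstacle.} The delicate point is the reduction to $\eta=c\eta_2+C\eta^{*}$ on the whole interval $(0,\pi/2]$. The reduction-of-order formula was derived only on nodal intervals of $\eta^{*}$, and singular behaviour was analysed only at $\theta=0$. To avoid this issue, we argue with standard ODE theory instead of the explicit formula. The equation $L_0 w=0$ is a second-order linear ODE that is regular on $(0,\pi/2]$, because $p=\sin\theta\,\rho_0^3/S_0^3>0$ there. Its solution space is therefore two-dimensional and spanned by $\eta^*$ together with any solution independent of it. At the regular singular point $\theta=0$, the indicial roots are $0,0$, so there is one bounded solution and one logarithmic solution. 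The $H^1(\sin\theta\,d\theta)$ requirement selects the bounded solution, which is $\eta^*$ up to scaling; this is exactly the computation already done for $\eta^{**}$, applied on a small interval near $0$ where $\eta^*(0)\neq0$. Uniqueness then propagates the conclusion across the whole interval. Finally, a kernel element $\eta\in H^1$ solving the equation weakly is smooth on $(0,\pi/2]$ by elliptic regularity, so $\mathcal B[\eta]$ makes sense pointwise.
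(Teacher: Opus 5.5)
Your proof is correct and follows the paper's argument. You reduce to $\eta=c\,\eta_2+C\,\eta^{*}$ by discarding the logarithmic homogeneous solution, eliminate $C$ with the contact-angle condition (since $\mathcal B[\eta_2]=0$ while $\mathcal B[\eta^{*}]=\mathcal B[\eta_1]=-\rho_0(a)\kappa_\theta(a)\neq0$), and eliminate $c$ with the volume constraint together with $dV/d\lambda\neq0$. Your extra justification of the reduction on all of $(0,\pi/2]$ (a regular ODE away from the axis, selection of the regular solution at the double indicial root $0$, then uniqueness) tightens the paper's nodal-interval remark rather than taking a different route.
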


\begin{proof}

By Theorem 4.9, if $\eta$ solves equation $L_{0}\eta=c\rho_{0}^{2}$, it holds that
\[
\eta=c\eta_{2}+C_1\eta^*.
\]
for some constant $C_{1}$.

Now impose the boundary condition. Since \(\eta_{2}\) is obtained by
differentiating a family satisfying the same fixed contact-angle condition,
we have
\[
\mathcal B[\eta_{2}]=0.
\]
On the other hand, using the boundary condition for $\eta_{1}$ and $\eta_{2}$, we have
\[
\mathcal B[\eta^{*}]
=
\mathcal B[\eta_{1}-g\eta_{2}]
=
\mathcal B[\eta_{1}]
=
-\rho_0(a)\kappa_\theta(a),
\]
which implies that
\[
0=\mathcal B[\eta]
=
c\mathcal B[\eta_{2}]+C_1\mathcal B[\eta^*]
=
-C_1\rho_0(a)\kappa_\theta(a).
\]
Since
\[
\rho_0(a)>0
\qquad\text{and}\qquad
\kappa_\theta(a)\neq0,
\]
we obtain $C_{1}=0$. Thus $\eta=c\eta_{2}.$

Finally, using the linearized volume constraint,
\[
0
=
\int_0^a \rho_0^2\eta\sin\theta\,d\theta
=
c\int_0^a \rho_0^2\eta_{2}\sin\theta\,d\theta.
\]
However, by the pressure--volume nondegeneracy assumption,
\[
\int_0^a \rho_0^2\eta_{2}\sin\theta\,d\theta\neq0,
\]
we conclude that $c=0$, which implies that $\eta=0.$
This proves the theorem.
\end{proof}

In conclusion, the equation
\[
L_{0}u=c\rho_{0}^{2}
\]
 has a nontrivial $H^{1}$ kernel satisfying the linearized volume constraint and boundary condition $B[u]=0$ if and only if $\frac{dV}{d\lambda}|_{\lambda=\lambda_{0}}=0$. 

\section{Fourier mode $m=1$}

In this section, we proceed to derive solutions of the PDE \eqref{eq:kernel_pde}. By Fourier expansion, any solution can be written in the form
\[
\varphi(\theta,\phi)
=
\sum_{m=0}^{+\infty}\varphi_m(\theta)e^{im\phi}.
\]
Based on the discussion in the previous section, under the assumption
\[
\frac{dV}{d\lambda}|_{\lambda=\lambda_{0}}\neq0,
\]
the mode \(m=0\) admits only the trivial solution, namely
\[
\varphi_0=0.
\]

We now seek nontrivial solutions in the Fourier mode \(m=1\), written as
\[
\varphi(\theta,\phi)
=
\varphi_{1}(\theta)e^{i\phi}.
\]
Substituting this ansatz into \eqref{eq:kernel_pde}, we find that \(\varphi_{1}\) must satisfy the ordinary differential equation
\begin{align}{\label{eq:linearized_1}}
L_1u
:=
-\frac{\sigma}{\sin\theta}
\frac{d}{d\theta}
\left(
\sin\theta\frac{\rho_0^3}{S_0^3}u'
\right)
+
\frac{\sigma \rho_0}{S_0\sin^2\theta}u
+
Q(\theta)u=0,
\end{align}
together with the boundary condition
\[
B[u]=0.
\]

There is an obvious physical solution to this equation satisfying the boundary condition which is the horizontal translation mode. Consider the steady state
\[
X_0(\theta,\phi)=\rho_0(\theta)e_r(\theta,\phi),
\]
where \(e_r\) and \(e_\theta\) denote the unit vectors in the \(r\)- and \(\theta\)-directions, respectively:
\[
e_r=
(\sin\theta\cos\phi,\sin\theta\sin\phi,\cos\theta),
\qquad
e_\theta=
(\cos\theta\cos\phi,\cos\theta\sin\phi,-\sin\theta).
\]
Moreover, define 
\[
S_0:=\sqrt{\rho_0^2+(\rho_0')^2}.
\]
Then the outward unit normal is given by
\[
\nu=
\frac{\rho_0 e_r-\rho_0'e_\theta}{S_0}.
\]

The horizontal translation modes are given by the normal components of the
constant vector fields \(e_1\) and \(e_2\) in the x-direction and y-direction, respectively:
\[
u_x=\nu\cdot e_1,
\qquad
u_y=\nu\cdot e_2.
\]
First, since
\[
e_r\cdot e_1=\sin\theta\cos\phi,
\qquad
e_\theta\cdot e_1=\cos\theta\cos\phi,
\]
we have
\[
\begin{aligned}
u_x
&=
\nu\cdot e_1
\\
&=
\frac{\rho_0 e_r\cdot e_1-\rho_0'e_\theta\cdot e_1}{S_0}
\\
&=
\frac{\rho_0\sin\theta\cos\phi-\rho_0'\cos\theta\cos\phi}{S_0}
\\
&=
\frac{\rho_0\sin\theta-\rho_0'\cos\theta}{S_0}\cos\phi .
\end{aligned}
\]

Similarly, since
\[
e_r\cdot e_2=\sin\theta\sin\phi,
\qquad
e_\theta\cdot e_2=\cos\theta\sin\phi,
\]
we obtain
\[
\begin{aligned}
u_y
&=
\nu\cdot e_2=
\frac{\rho_0\sin\theta-\rho_0'\cos\theta}{S_0}\sin\phi .
\end{aligned}
\]

Therefore, the two horizontal translation modes are given by
\begin{align}{\label{eq:trans_x}}
u_x(\theta,\phi)
=
\frac{\rho_0\sin\theta-\rho_0'\cos\theta}{S_0}\cos\phi,
\end{align}
and
\begin{align}{\label{eq:trans_y}}
u_y(\theta,\phi)
=
\frac{\rho_0\sin\theta-\rho_0'\cos\theta}{S_0}\sin\phi.
\end{align}

Therefore, let
\[
\varphi_1(\theta)
:=
\frac{S_{0}}{\rho_{0}}\frac{\rho_0\sin\theta-\rho_0'\cos\theta}{S_0}=\frac{\rho_0\sin\theta-\rho_0'\cos\theta}{\rho_0},
\]
be the radial amplitude corresponding to $u_{x}$ and $u_{y}$. We show that this is the solution to equation \eqref{eq:linearized_1} $L_{1}\varphi_{1}=0$ satisfying the boundary condition $\mathcal{B}(\varphi_{1})=0$. 
\begin{theorem}[Horizontal translation mode for the \(m=1\) problem]
Define
\[
\varphi_1(\theta)
=
\frac{\rho_0\sin\theta-\rho_0'\cos\theta}{\rho_{0}}.
\]
It satisfies the \(m=1\) linearized variation  equation
\[
L_1\varphi_1=0,
\]
and the linearized contact-angle boundary condition
\[
\mathcal B[\varphi_1]=0
\qquad\text{at } \theta=\frac{\pi}{2}.
\]
\end{theorem}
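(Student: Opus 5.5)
The plan is to reduce $L_1\varphi_1=0$ to the statement that the normal component of a horizontal translation is a homogeneous Jacobi field, reusing the machinery of Section~4. The boundary condition will then be a short direct computation at $\theta=\pi/2$.

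\textbf{Step 1: extend Theorem~\ref{thm:J_e} to the full operator.} I would first show that the full operator in \eqref{eq:kernel_pde} satisfies, for any smooth $\eta(\theta,\phi)$,
\[
-\frac{\sigma}{\sin\theta}\partial_\theta\Bigl(\sin\theta\frac{\rho_0^3}{S_0^3}\partial_\theta\eta\Bigr)-\frac{\sigma\rho_0}{S_0\sin^2\theta}\partial_\phi^2\eta+Q\eta=\rho_0^2\,\mathcal J\Bigl(\frac{\rho_0}{S_0}\eta\Bigr).
\]
The $\theta$-part is exactly Theorem~\ref{thm:J_e}, since $\rho_0/S_0$ is independent of $\phi$ and the $\phi$-derivatives commute with multiplication by it. For the azimuthal term, the explicit form of $\mathcal J$ in spherical coordinates gives
\[
\rho_0^2\Bigl(-\frac{\sigma}{\rho_0^2\sin^2\theta}\Bigr)\partial_\phi^2\Bigl(\frac{\rho_0}{S_0}\eta\Bigr)=-\frac{\sigma\rho_0}{S_0\sin^2\theta}\partial_\phi^2\eta,
\]
which matches the second term. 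Applying this to $\eta=\varphi_1(\theta)\cos\phi$, so that $\partial_\phi^2\eta=-\eta$, I get
\[
(L_1\varphi_1)\cos\phi=\rho_0^2\,\mathcal J(u_x),\qquad u_x=\frac{\rho_0}{S_0}\varphi_1\cos\phi=\nu\cdot e_1 .
\]
The same holds with $\sin\phi$ and $u_y$.

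\textbf{Step 2: show $\mathcal J u_x=0$.} I would argue as in Theorem~\ref{thm:z_translation_mode}, using the Ruh--Vilms identity \eqref{eq:R-V-I} with $\hat e=e_1$. This gives
\[
-\sigma\Delta_{\Sigma_0}u_x-\sigma|A|^2u_x=-\sigma\, e_1\cdot\nabla_{\Sigma_0}H .
\]
By \eqref{eq:Lagrange_sub}, $\sigma\nabla_{\Sigma_0}H=-g(\hat k-v_1\hat n)$, where $v_1=\hat k\cdot\hat n=\nu_3$. Since $e_1\cdot\hat k=0$, the right-hand side equals $g(0-v_1u_x)=-g\nu_3u_x$. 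Adding the gravity term $g\nu_3u_x$ of $\mathcal J$ gives $\mathcal Ju_x=0$, and hence $L_1\varphi_1=0$. The delicate points are signs and orientation: the convention for $H$ and $\nu$ must match the one used in Theorem~\ref{thm:z_translation_mode}. I will reuse that convention verbatim rather than rederive it. If needed, I would cross-check by a direct computation in the $\theta$-variable, using $\mathscr E=\mathscr E'=0$.

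\textbf{Step 3: the boundary condition.} The contact-angle condition reads $\rho_0'/S_0=\cos\gamma_e$ at $\pi/2$. Using it,
\[
\frac{(\rho_0')^3}{S_0^3}-\cos\gamma_e=\frac{\rho_0'\bigl((\rho_0')^2-S_0^2\bigr)}{S_0^3}=-\frac{\rho_0^2\rho_0'}{S_0^3},
\]
so that
\[
\mathcal B[\eta]=\frac{\rho_0^2}{S_0^3}\bigl(\rho_0\eta'-\rho_0'\eta\bigr)\quad\text{at }\theta=\pi/2 .
\]
This is consistent with \eqref{eq:linearized_contact_angle_mode}. Now $\varphi_1=\sin\theta-(\rho_0'/\rho_0)\cos\theta$, so
\[
\varphi_1'=\cos\theta-(\rho_0'/\rho_0)'\cos\theta+(\rho_0'/\rho_0)\sin\theta .
\]
At $\theta=\pi/2$ this gives $\varphi_1=1$ and $\varphi_1'=\rho_0'/\rho_0$. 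Therefore $\rho_0\varphi_1'-\rho_0'\varphi_1=\rho_0'-\rho_0'=0$, and $\mathcal B[\varphi_1]=0$.

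I expect Step~2 to be the only real obstacle, because it depends on the sign conventions of the Jacobi operator.
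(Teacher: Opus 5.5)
Your proposal is correct and follows essentially the paper's route. You identify $\frac{\rho_0}{S_0}\varphi_1\cos\phi$ with $u_x=\nu\cdot e_1$, show $\mathcal J u_x=0$, pass to $L_1\varphi_1=0$ by the change of variables of Theorem~\ref{thm:J_e} extended to the azimuthal term, and check $\mathcal B[\varphi_1]=0$ directly using $\varphi_1(\pi/2)=1$, $\varphi_1'(\pi/2)=\rho_0'/\rho_0$ and $\rho_0'/S_0=\cos\gamma_e$. The paper gets $\mathcal J u_x=0$ differently: it differentiates the translated equilibria $X_0+\varepsilon e_1$ and reuses the variation computation of Theorem~\ref{thm:lambda_variation_mode}. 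You instead apply the Ruh--Vilms identity as in Theorem~\ref{thm:z_translation_mode}, with $e_1\cdot\hat k=0$ removing the source term; this is slightly more self-contained, since the translation field has an azimuthal tangential component that the radial computation in that theorem does not literally cover.
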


\begin{proof}

Following the discussion in Section 4, we relate $L_{1}$ with the Jacobi operator $\mathcal{J}$. 
Now consider the translated family
\[
X_\varepsilon=X_0+\varepsilon e_1.
\]
Horizontal translations preserve the surface geometry, the gravitational height \(z\), and the contact angle with the horizontal substrate. Therefore, \(X_\varepsilon\) remains a family of equilibria with the same Lagrange multiplier \(\lambda\). Differentiating the Euler--Lagrange equation with respect to \(\varepsilon\) at \(\varepsilon=0\), and applying the same computation as in Step~2 of the proof of Theorem~4.4, we obtain the following result.
\[
\mathcal J u_x=0.
\]
Since
\[
u_x=\frac{\rho_0}{S_0}\varphi_1\cos\phi,
\]
the same change-of-variables argument as in the proof of Theorem
\ref{thm:J_e} yields
\[
L_1\varphi_1
=
\rho_0^2\,
\mathcal J\left(
\frac{\rho_0}{S_0}\varphi_1\cos\phi
\right)\bigg/\cos\phi .
\]
Therefore
\[
L_1\varphi_1=0.
\]

It remains to check the boundary condition. At $a:=\frac{\pi}{2},$
we have
\begin{align}{\label{eq:boundary_phi_1}}
\varphi_1(a)
=
\sin a-\frac{\rho_0'(a)}{\rho_0(a)}\cos a
=
1.
\end{align}
Moreover, taking derivative with respect to $\theta$, we have
\[
\varphi_1'
=
\cos\theta
-
\left(\frac{\rho_0'}{\rho_0}\right)'\cos\theta
+
\frac{\rho_0'}{\rho_0}\sin\theta,
\]
and hence at $\theta=a=\frac{\pi}2{}$
\begin{align}{\label{eq:boundary_derivative}}
\varphi_1'(a)
=
\frac{\rho_0'(a)}{\rho_0(a)}.
\end{align}
Therefore, combining equations \eqref{eq:boundary_phi_1} and equation \eqref{eq:boundary_derivative}, a direct computation yields
\begin{align}{\label{eq:final_boundary}}
\begin{aligned}
\mathcal B[\varphi_1]
&=
\frac{\rho_0^3(a)}{S_0^3(a)}\varphi_1'(a)
+
\left(
\frac{(\rho_0'(a))^3}{S_0^3(a)}
-
\cos\gamma_e
\right)\varphi_1(a)
\\
&=
\frac{\rho_0^3(a)}{S_0^3(a)}
\frac{\rho_0'(a)}{\rho_0(a)}
+
\frac{(\rho_0'(a))^3}{S_0^3(a)}
-
\cos\gamma_e
\\
&=
\frac{\rho_0^2(a)\rho_0'(a)+(\rho_0'(a))^3}{S_0^3(a)}
-
\cos\gamma_e
\\
&=
\frac{\rho_0'(a)\left(\rho_0^2(a)+(\rho_0'(a))^2\right)}{S_0^3(a)}
-
\cos\gamma_e
\\
&=
\frac{\rho_0'(a)S_0^2(a)}{S_0^3(a)}
-
\cos\gamma_e
\\
&=
\frac{\rho_0'(a)}{S_0(a)}
-
\cos\gamma_e.
\end{aligned}
\end{align}
Using the equilibrium contact-angle condition
\[
\frac{\rho_0'(a)}{S_0(a)}
=
\cos\gamma_e,
\]
we conclude from equation \eqref{eq:final_boundary} that
\[
\mathcal B[\varphi_1]=0.
\]
Therefore \(\varphi_1\) satisfies both $L_1\varphi_1=0$ and $\mathcal B[\varphi_1]=0.$
\end{proof}

Next, we show that $\varphi_{1}$ is the unique solution of the equation $L_{1}\varphi=0$ satisfying the boundary condition. 

\begin{theorem}
    The function \(\varphi_{1}\) spans the kernel of \(L_{1}\) in \(H^{1}(\Sigma)\) among functions satisfying the boundary condition
\[
\mathcal{B}[u]=0.
\]
Consequently, the horizontal translation modes in the \(x\)- and \(y\)-directions span the kernel of the second variation in the Fourier mode \(m=1\).

\end{theorem}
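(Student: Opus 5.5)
We treat $L_1u=0$ as a second-order ODE on $(0,\pi/2]$ with a regular singular point at the axis $\theta=0$. The plan mirrors the $m=0$ analysis: use the explicit solution $\varphi_1$ and reduction of order to produce a second homogeneous solution, show that this second solution is not in $H^1(\Sigma_0)$, and conclude that every admissible kernel element is a multiple of $\varphi_1$. Note that the $m=1$ equation is homogeneous. The constant $c$ in \eqref{eq:kernel_pde} multiplies $\rho_0^2$, which is $\phi$-independent, so it only enters the $m=0$ mode. Likewise, the volume constraint \eqref{eq:linearized_volume_spherical} is automatic for $m\ge1$, since $\int_0^{2\pi}e^{i\phi}\,d\phi=0$.

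\textbf{Step 1 (local structure at the axis).} First record the behaviour of $\varphi_1$ near $\theta=0$. Since $\rho_0'(0)=0$ and $\rho_0$ is even in $\theta$, we have $\rho_0'(\theta)=\rho_0''(0)\theta+O(\theta^3)$, and therefore
\[
\varphi_1(\theta)=\Bigl(1-\tfrac{\rho_0''(0)}{\rho_0(0)}\Bigr)\theta+O(\theta^3).
\]
The coefficient is nonzero: $1-\rho_0''(0)/\rho_0(0)=S_0^2\kappa/\rho_0^2$ at the apex, which is positive by strict convexity \cite{Finn}. The indicial equation of $L_1$ at $\theta=0$ is $p(\theta)=\theta+O(\theta^3)$ combined with the potential $\sigma\rho_0/(S_0\sin^2\theta)\sim\sigma/\theta^2$. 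Its exponents are $\pm1$, so a regular solution behaves like $\theta$ and a singular one like $\theta^{-1}$. This is consistent with $\varphi_1\sim\theta$.

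\textbf{Step 2 (second solution and its non-admissibility).} Set $p=\sin\theta\,\rho_0^3/S_0^3$. Exactly as in the multiplier argument of Section~4, the second solution is
\[
\varphi_1^{**}(\theta)=\varphi_1(\theta)\int_{\theta_*}^{\theta}\frac{ds}{p(s)\varphi_1(s)^2}.
\]
This formula works on any interval where $\varphi_1\neq0$, and in particular near the axis. The integrand is $\sim 1/(C^2s^3)$, so
\[
\varphi_1^{**}\sim-\frac{1}{2C\theta}, \qquad (\varphi_1^{**})'\sim\frac{1}{2C\theta^2},
\]
where $C$ is the coefficient from Step 1. Then
\[
\int_0^\varepsilon|(\varphi_1^{**})'|^2\sin\theta\,d\theta=\infty.
\]
Moreover, the angular energy $\int_0^\varepsilon|\varphi_1^{**}|^2\sin^{-1}\theta\,d\theta$ also diverges. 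Hence any $u=C_1\varphi_1+C_2\varphi_1^{**}$ with $C_2\neq0$ makes $u(\theta)e^{i\phi}$ fail to lie in $H^1(\Sigma_0)$. Therefore any $H^1$ solution of $L_1u=0$ equals $C_1\varphi_1$.

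\textbf{Step 3 (boundary condition and conclusion).} The previous theorem already gives $\mathcal B[\varphi_1]=0$, so every $C_1\varphi_1$ is admissible and the $m=1$ kernel is exactly $\operatorname{span}\{\varphi_1\}$. The coefficients are real, so the real and imaginary parts (the $\cos\phi$ and $\sin\phi$ components, and likewise the $m=-1$ mode) decouple. This yields precisely $\operatorname{span}\{\varphi_1\cos\phi,\varphi_1\sin\phi\}$, which via $u=\frac{\rho_0}{S_0}\varphi$ are the normal components $u_x,u_y$ in \eqref{eq:trans_x}--\eqref{eq:trans_y}. Notably, the boundary condition is not needed to exclude anything here; regularity at the axis already cuts the solution space to one dimension.

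\textbf{Main obstacle.} I expect the delicate point to be Step 2's use of reduction of order on the whole interval, because $\varphi_1$ might vanish somewhere in $(0,\pi/2)$. To avoid this, I would only use the formula on $(0,\varepsilon)$, where $\varphi_1\ne0$, to identify the singular behaviour there. The global statement then follows from uniqueness for the ODE on $[\varepsilon,\pi/2]$: a solution is determined by its germ near the axis, and the regular germs form a one-dimensional space spanned by $\varphi_1$. A secondary check is that $\rho_0/(S_0\sin^2\theta)$ has the expansion $\theta^{-2}+O(1)$, so that the exponents are exactly $\pm1$ with no logarithmic corrections affecting the conclusion.
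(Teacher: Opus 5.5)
Your proposal is correct and follows essentially the same route as the paper: reduction of order from $\varphi_1$ with $p=\sin\theta\,\rho_0^3/S_0^3$, then the axis asymptotics $\varphi_1^{**}\sim -1/(2A\theta)$, then divergence of both the $\theta$-derivative energy and the angular energy, forcing $C_2=0$. Your additions tighten points the paper leaves loose:
\begin{itemize}
\item The paper only asserts $A\neq0$, and the condition it cites, $\rho_0''(0)\neq0$, is not the relevant one. You correctly identify $A=1-\rho_0''(0)/\rho_0(0)=\rho_0(0)\kappa_\theta(0)>0$ by convexity. Your expression $S_0^2\kappa/\rho_0^2$ is off by a factor $\rho_0(0)$, which does not affect the sign.
\item Your germ-uniqueness argument on $[\varepsilon,\pi/2]$ removes any concern about interior zeros of $\varphi_1$ in the reduction-of-order formula.
\end{itemize}
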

\begin{proof}
 The \(m=1\) equation is given by
\begin{align}{\label{est:linearized_1}}
L_1u
=
-\frac{\sigma}{\sin\theta}
\frac{d}{d\theta}
\left(
\sin\theta\frac{\rho_0^3}{S_0^3}u'
\right)
+
\frac{\sigma\rho_0}{S_0\sin^2\theta}u
+
Q(\theta)u .
\end{align}
Set $p(\theta)
=
\sin\theta\frac{\rho_0^3}{S_0^3}.$
Multiplying equation \eqref{eq:linearized_1} by \(\sin\theta\), we obtain the self-adjoint form
\begin{align}{\label{eq:self_adjoint}}
-\sigma(pu')'
+
\left(
\frac{\sigma\rho_0}{S_0\sin\theta}
+
Q(\theta)\sin\theta
\right)u
=0.
\end{align}

We now construct the second linearly independent solution by the multiplier
method. Let
\[
u(\theta)=\varphi_1(\theta)M(\theta).
\]
Then
\[
pu'=p\varphi_1'M+p\varphi_1M'.
\]
Therefore, taking derivative with respect to $\theta$, the equation above is transformed to
\begin{align}{\label{eq:multiplier}}
(pu')'
=
(p\varphi_1')'M
+
p\varphi_1'M'
+
(p\varphi_1M')'.
\end{align}

Substituting equation \eqref{eq:multiplier} into the equation \eqref{eq:self_adjoint} gives
\begin{align}{\label{eq:multiplier_equation}}
\begin{aligned}
0
&=
-\sigma(pu')'
+
\left(
\frac{\sigma\rho_0}{S_0\sin\theta}
+
Q(\theta)\sin\theta
\right)u
\\
&=
-\sigma
\left[
(p\varphi_1')'M
+
p\varphi_1'M'
+
(p\varphi_1M')'
\right]
+
\left(
\frac{\sigma\rho_0}{S_0\sin\theta}
+
Q(\theta)\sin\theta
\right)\varphi_1M .
\end{aligned}
\end{align}
Since \(\varphi_1\) solves the homogeneous equation, the terms involving \(M\) in equation \eqref{eq:multiplier_equation}
cancel. Thus, equation \eqref{eq:multiplier_equation} can be rewritten as
\[
p\varphi_1'M'
+
(p\varphi_1M')'
=0.
\]
Expanding the second term,
\[
(p\varphi_1M')'
=
(p\varphi_1)'M'
+
p\varphi_1M'',
\]
we obtain the following first-order ODE for the multiplier $M$
\begin{align}{\label{eq:multiplier_ODE}}
p\varphi_1'M'
+
(p\varphi_1)'M'
+
p\varphi_1M''
=0.
\end{align}

Since
\[
(p\varphi_1)'=p'\varphi_1+p\varphi_1',
\]
equation \eqref{eq:multiplier_ODE} can be rewritten as
\[
p\varphi_1M''
+
\left(
p'\varphi_1+2p\varphi_1'
\right)M'
=0.
\]
Equivalently,
\begin{align}{\label{eq:multiplier_ODE_1}}
\frac{d}{d\theta}
\left(
p\varphi_1^2M'
\right)
=0.
\end{align}
Hence, integrating equation \eqref{eq:multiplier_ODE_1} from $0$ to an arbitrary point $\theta$, we have
\[
p(\theta)\varphi_1^2(\theta)M'(\theta)=C.
\]
Taking \(C=1\), the equation above is equivalent to
\begin{align}{\label{eq:multiplier_ODE_2}}
M'(\theta)=\frac{1}{p(\theta)\varphi_1^2(\theta)}.
\end{align}
Therefore, integrating equation \eqref{eq:multiplier_ODE_2} from $\theta_{*}$ to an arbitrary point $\theta$, we have
\[
M(\theta)
=
\int_{\theta_*}^{\theta}
\frac{1}{p(s)\varphi_1^2(s)}\,ds .
\]
Thus a second homogeneous solution is
\begin{align}{\label{eq:multiplier_result}}
\varphi_2(\theta)
=
\varphi_1(\theta)
\int_{\theta_*}^{\theta}
\frac{1}{p(s)\varphi_1^2(s)}\,ds .
\end{align}

Since $p(s)=\sin s\,\frac{\rho_0^3(s)}{S_0^3(s)},$
we may write equation \eqref{eq:multiplier_result} as
\begin{align}{\label{eq:phi_2}}
\boxed{
\varphi_2(\theta)
=
\varphi_1(\theta)
\int_{\theta_*}^{\theta}
\frac{S_0^3(s)}
{\sin s\,\rho_0^3(s)\varphi_1^2(s)}
\,ds .
}
\end{align}
Consequently, the general local solution of the \(m=1\) homogeneous equation is
\[
u(\theta)
=
C_1\varphi_1(\theta)+C_2\varphi_2(\theta).
\]

We now examine the behavior of \(\varphi_2\) near the symmetry axis
\(\theta=0\). Since the background profile is smooth at the axis,
\[
\rho_0(0)>0,\qquad \rho_0'(0)=0,\qquad \rho_{0}^{\prime \prime}(0)\neq 0,\qquad S_0(0)=\rho_0(0).
\]
Therefore
\[
p(\theta)
=
\sin\theta\frac{\rho_0^3}{S_0^3}
\sim \theta
\qquad\text{as }\theta\to0.
\]
Moreover, the regular \(m=1\) translation mode satisfies
\[
\varphi_1(\theta)\sim A\theta,
\qquad A\neq0.
\]
Hence, we obtain the following asymptotic estimate
\begin{align}{\label{eq:multiplier_ODE_5}}
\frac{1}{p(\theta)\varphi_1^2(\theta)}
\sim
\frac{1}{A^2\theta^3}.
\end{align}
Integrating equation \eqref{eq:multiplier_ODE_5} from $\theta^{*}$ to a point $\theta$ that is close to $0$, it follows that
\[
\int_{\theta^{*}}^\theta
\frac{1}{p(s)\varphi_1^2(s)}\,ds
\sim
-\frac{1}{2A^2\theta^2}.
\]
Therefore, we obtain the following estimate from the definition of $\varphi_{2}$ \eqref{eq:phi_2} and the asymptotic estimate above
\[
\varphi_2(\theta)
=
\varphi_1(\theta)
\int^\theta
\frac{1}{p(s)\varphi_1^2(s)}\,ds
\sim
A\theta
\left(
-\frac{1}{2A^2\theta^2}
\right)
=
-\frac{1}{2A}\frac1\theta .
\]
Thus, from the estimate above, $\varphi_{2}$ has the following asymptotic behavior as $\theta\rightarrow0$,
\[
\varphi_2(\theta)\sim -\frac{1}{2A}\frac1\theta,
\qquad
\varphi_2'(\theta)\sim \frac{1}{2A}\frac1{\theta^2}.
\]

Consequently,
\[
\int_0^\varepsilon |\varphi_2'(\theta)|^2\sin\theta\,d\theta
\sim
\int_0^\varepsilon
\frac{1}{\theta^4}\theta\,d\theta
=
\int_0^\varepsilon
\frac{1}{\theta^3}\,d\theta
=
+\infty.
\]
Moreover, the angular derivative contribution also diverges:
\[
\int_0^\varepsilon
\frac{|\varphi_2(\theta)|^2}{\sin^2\theta}\sin\theta\,d\theta
\sim
\int_0^\varepsilon
\frac{\theta^{-2}}{\theta^2}\theta\,d\theta
=
\int_0^\varepsilon
\frac{1}{\theta^3}\,d\theta
=
+\infty.
\]
Hence $\varphi_2\notin H^1.$

In conclusion, if
\[
u=C_1\varphi_1+C_2\varphi_2
\]
is an \(H^1\)-admissible \(m=1\) solution, then necessarily $C_2=0.$
Consequently,
\[
\boxed{
u(\theta)=C_1\varphi_1(\theta)
=
C_1\left(
\sin\theta-\frac{\rho_0'}{\rho_0}\cos\theta
\right).
}
\]
Thus the only \(H^1\)-admissible \(m=1\) solution for $L_{1}$ is the horizontal
translation mode, up to multiplication by a constant.
\end{proof}

\section{Fourier Mode $m\geq 2$}

In this section, we study solutions of the PDE \eqref{eq:kernel_pde} of the form
\begin{align}
\varphi=\sum_{m=2}^{\infty}\varphi_{m}(\theta)e^{im\phi}.
\end{align}
Our goal is to show that no nontrivial solution of this form exists. We begin by proving the following theorem and lemma.

\begin{lemma}[Nonnegativity of the \(m=1\) quadratic form]
Let
\[
a=\frac{\pi}{2},\qquad S=S_0=\sqrt{\rho_0^2+(\rho_0')^2}.
\]
Assume that the second variation of the energy is nonnegative for all
admissible volume-preserving perturbations, namely
\[
\delta^2 \mathcal{F}(\rho_0)[\varphi,\varphi]\geq 0
\]
for $\varphi\in H^{1}(\Sigma_{0})$ whenever
\[
\int_0^{2\pi}\int_0^a
\rho_0^2 \varphi\sin\theta\,d\theta d\phi=0.
\]
Suppose that $\varphi(\theta,\phi)=v(\theta)e^{i\phi}$. Then the \(m=1\) quadratic form
\[
\begin{aligned}
\mathfrak Q_1[v]
:={}&
\sigma\int_0^a
\sin\theta\frac{\rho_0^3}{S_0^3}|v'|^2\,d\theta
+
\int_0^a
\left(
\frac{\sigma \rho_0}{S_0\sin\theta}
+
Q(\theta)\sin\theta
\right)v^2\,d\theta
\\
&\quad
+
\sigma
\left(
\frac{(\rho_0')^3}{S_0^3}
-\cos\gamma_e
\right)_{\theta=a}
v(a)^2
\end{aligned}
\]
is nonnegative:
\[
\boxed{
\mathfrak Q_1[v]\geq 0.
}
\]
\end{lemma}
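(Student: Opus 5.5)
The idea is to test the nonnegativity hypothesis on real $m=1$ perturbations and read off $\mathfrak Q_1$ after the angular integration. Given a real $v\in H^1$ with the appropriate weight, set $\varphi(\theta,\phi)=v(\theta)\cos\phi$; if $v$ is complex, treat real and imaginary parts separately, since $\mathfrak Q_1$ is real-quadratic. This perturbation automatically satisfies the linearized volume constraint, because $\int_0^{2\pi}\cos\phi\,d\phi=0$ and the weight $\rho_0^2\sin\theta$ does not depend on $\phi$. No constraint on $v$ is therefore needed. This is the point that makes $m\ge1$ easier than $m=0$: the Lagrange-multiplier term $c\rho_0^2$ is invisible in nonzero modes.

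The main step is to compute $\delta^2\mathcal F[\varphi]$ from \eqref{eq:2-D} for this $\varphi$. Since $\rho_0$ is axisymmetric, $\nabla\rho_0\cdot\nabla\varphi=\rho_0'\partial_\theta\varphi$, and the gradient part splits into
\[
\frac{\rho_0^3}{S_0^3}|\partial_\theta\varphi|^2+\frac{\rho_0}{S_0\sin^2\theta}|\partial_\phi\varphi|^2 ,
\]
as already recorded in the derivation of \eqref{eq:kernel_pde}. The mixed term
\[
2\sigma\frac{(\rho_0')^3}{S_0^3}\,\varphi\,\partial_\theta\varphi=\sigma\frac{(\rho_0')^3}{S_0^3}\,\partial_\theta(\varphi^2)
\]
is integrated by parts in $\theta$ exactly as in that proof. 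It produces the interior contribution $-\frac{1}{\sin\theta}\bigl(\sin\theta (\rho_0')^3/S_0^3\bigr)'\varphi^2$, which is absorbed into $Q$, together with the boundary term $\sigma\frac{(\rho_0')^3}{S_0^3}\varphi^2\big|_{\theta=a}$. The contribution at $\theta=0$ vanishes because of the factor $\sin\theta$ and because $\rho_0'(0)=0$.

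The boundary term of $\delta^2\mathcal F$ contributes $-\sigma\cos\gamma_e\int_0^{2\pi}\varphi(a,\phi)^2\,d\phi$. Integrating over $\phi$ uses $\int\cos^2\phi\,d\phi=\int\sin^2\phi\,d\phi=\pi$, and every term picks up the same factor $\pi$. The result should be $\delta^2\mathcal F[\varphi]=\pi\,\mathfrak Q_1[v]$, where the angular term $\sigma\rho_0/(S_0\sin\theta)\,v^2$ comes from $|\partial_\phi\varphi|^2=v^2\sin^2\phi$ after multiplication by the surface element $\sin\theta$. The hypothesis then gives $\mathfrak Q_1[v]\ge0$.

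The step that needs care is the justification of this integration by parts for general $H^1$ functions, which I would handle by density of smooth functions. The factor $\sin\theta$ controls the endpoint $\theta=0$. If $\int v^2/\sin\theta\,d\theta=\infty$, the inequality holds trivially, so I would restrict to $v$ with finite $m=1$ energy; for these, $v(\theta)\to0$ as $\theta\to0$ in an averaged sense and the boundary term at the axis drops out. Matching the coefficient of $\sigma$ in front of the boundary term $v(a)^2$ is bookkeeping against the normalization of $B$.
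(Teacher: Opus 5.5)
Your proposal is correct and follows the same route as the paper. You test the nonnegativity hypothesis on $\varphi=v(\theta)\cos\phi$, observe that the linearized volume constraint holds automatically because $\int_0^{2\pi}\cos\phi\,d\phi=0$, and integrate out $\phi$ to obtain $\delta^2\mathcal F(\rho_0)[\varphi,\varphi]=\pi\,\mathfrak Q_1[v]$. The one difference is that you derive the radial form of the second variation yourself, by integrating the mixed term by parts to produce the derivative part of $Q$ and the boundary coefficient $\sigma\bigl((\rho_0')^3/S_0^3-\cos\gamma_e\bigr)$; the paper simply quotes that form.
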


\begin{proof}
Let \(v=v(\theta)\) be an admissible \(m=1\) amplitude and define
\[
\varphi(\theta,\phi)=v(\theta)\cos\phi .
\]
The \(\sin\phi\) mode is treated identically.

Since
\[
\int_0^{2\pi}\cos\phi\,d\phi=0,
\]
we have
\[
\int_0^{2\pi}\int_0^a
\rho_0^2 \varphi(\theta,\phi)\sin\theta\,d\theta d\phi
=
\int_0^a \rho_0^2 v(\theta)\sin\theta\,d\theta
\int_0^{2\pi}\cos\phi\,d\phi
=0.
\]
Thus every \(m=1\) perturbation is automatically volume-preserving to first
order.

The second variation in radial variables has the form
\begin{align}{\label{second_variation_bilinear}}
\begin{aligned}
\delta^{2}\mathcal{F}(\rho_0)[\varphi,\varphi]
={}&
\int_0^{2\pi}\int_0^a
\left[
\sigma\sin\theta\frac{\rho_0^3}{S_0^3}|\partial_\theta\varphi|^2
+
\frac{\sigma\rho_0}{S_0\sin\theta}|\partial_\phi\varphi|^2
+
Q(\theta)\sin\theta\,\varphi^2
\right]d\theta d\phi
\\
&\quad
+
\sigma
\left(
\frac{(\rho_0')^3}{S_0^3}
-\cos\gamma_e
\right)_{\theta=a}
\int_0^{2\pi}\varphi(a,\phi)^2\,d\phi .
\end{aligned}
\end{align}
For
\[
\varphi(\theta,\phi)=v(\theta)\cos\phi,
\]
we have
\[
\partial_\theta\varphi=v'(\theta)\cos\phi,
\qquad
\partial_\phi\varphi=-v(\theta)\sin\phi.
\]
Using
\[
\int_0^{2\pi}\cos^2\phi\,d\phi
=
\int_0^{2\pi}\sin^2\phi\,d\phi
=
\pi,
\]
we obtain the following equation from \eqref{second_variation_bilinear}
\[
\begin{aligned}
\delta^{2}\mathcal{F}(\rho_0)[\varphi,\varphi]
={}&
\pi\sigma\int_0^a
\sin\theta\frac{\rho_0^3}{S_0^3}|v'|^2\,d\theta
+
\pi\int_0^a
\frac{\sigma\rho_0}{S_0\sin\theta}v^2\,d\theta
\\
&\quad
+
\pi\int_0^a
Q(\theta)\sin\theta\,v^2\,d\theta
\\
&\quad
+
\pi\sigma
\left(
\frac{(\rho_0')^3}{S_0^3}
-\cos\gamma_e
\right)_{\theta=a}
v(a)^2 .
\end{aligned}
\]
Therefore
\[
\delta^{2}\mathcal{F}(\rho_0)[\varphi,\varphi]
=
\pi\mathfrak Q_1[v].
\]
By the assumed nonnegativity of the second variation and the fact that
\(\varphi\) is volume-preserving, we have
\[
0\leq \delta^{2}\mathcal{F}(\rho_0)[\varphi,\varphi]
=
\pi\mathfrak Q_1[v].
\]
Hence
\[
\mathfrak Q_1[v]\geq0.
\]

This completes the proof.
\end{proof}

Based on the previous Lemma, we show that there is no nontrivial solution to \eqref{eq:kernel_pde} of the form
\begin{align}
    \varphi=\sum_{m=2}^{+\infty}\varphi_{m}(\theta)e^{im\phi}
\end{align}

\begin{theorem}
    When $m\geq 2$, the following equation with boundary condition $\mathcal{B}(v)=0$ has no nontrivial solution 
\[
L_{m}v:=-\frac{\sigma}{\sin\theta}
\frac{d}{d\theta}
\left(
\sin\theta \frac{\rho_0^3}{S_0^3}v'
\right)
+
\frac{m^{2}\sigma\rho_0}{S_0\sin^2\theta}v
+
Q(\theta)v=0 .
\]
Here
\[
S_0=\sqrt{\rho_0^2+(\rho_0')^2},
\]
and
\[
Q(\theta)
=
3g\rho_0^2\cos\theta
-
2\lambda\rho_0
+
\sigma
\left[
\frac{\rho_0\left(2\rho_0^2+3(\rho_0')^2\right)}{S_0^3}
-
\frac{1}{\sin\theta}
\frac{d}{d\theta}
\left(
\sin\theta
\frac{(\rho_0')^3}{S_0^3}
\right)
\right].
\]
\end{theorem}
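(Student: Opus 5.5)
Suppose, for contradiction, that $v\not\equiv0$ is an $H^1$-admissible solution of $L_mv=0$ with $\mathcal B(v)=0$ for some $m\ge2$. The plan is to compare the quadratic form of mode $m$ with the $m=1$ form $\mathfrak Q_1$ from the preceding Lemma. I would define
\[
\mathfrak Q_m[v]:=\mathfrak Q_1[v]+(m^2-1)\sigma\int_0^a\frac{\rho_0}{S_0\sin\theta}v^2\,d\theta .
\]
This is exactly the quadratic form obtained by inserting $v(\theta)\cos(m\phi)$ into the second variation, up to the factor $\pi$.

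\textbf{Step 1.} Multiply $L_mv=0$ by $v\sin\theta$ and integrate over $(0,a)$, integrating by parts in the divergence term. The boundary term at $\theta=0$ vanishes because $\sin\theta\to0$ and $v$ is regular. Regularity is legitimate for $m\ge2$: by the Frobenius analysis at the regular singular point $\theta=0$ (indicial roots $\pm m$), the admissible solution behaves like $\theta^m$, just as in the $m=1$ argument. The boundary term at $\theta=a$ is $-\sigma\frac{\rho_0^3}{S_0^3}v'(a)v(a)$. By $\mathcal B(v)=0$ this equals $\sigma\bigl(\frac{(\rho_0')^3}{S_0^3}-\cos\gamma_e\bigr)v(a)^2$, which is precisely the boundary term in $\mathfrak Q_1$. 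Hence $\mathfrak Q_m[v]=0$.

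\textbf{Step 2.} By the Lemma, $\mathfrak Q_1[v]\ge0$. This applies because $v\cos\phi$ is automatically volume-preserving and lies in $H^1$: the $\theta^m$ behaviour at the axis makes $\int v^2/\sin\theta<\infty$. Therefore
\[
0=\mathfrak Q_m[v]\ge(m^2-1)\sigma\int_0^a\frac{\rho_0}{S_0\sin\theta}v^2\,d\theta .
\]
Since $m^2-1\ge3$ and $\rho_0/S_0>0$, the integral vanishes, so $v\equiv0$ on $(0,a)$. This is the desired contradiction.

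\textbf{Step 3 (complex coefficients).} The paper writes the modes as $e^{im\phi}$. Since the coefficients of $L_m$ and $\mathcal B$ are real, I would treat the real and imaginary parts of $v$ separately, or equivalently use the $\cos(m\phi)$ and $\sin(m\phi)$ decomposition. Either way the argument reduces to the real case above.

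The main obstacle is Step 1: justifying that the $H^1$-admissible solution is regular at the axis, so that both the integration by parts and the application of the Lemma are valid. I would handle this exactly as in the $m=1$ analysis. The second solution behaves like $\theta^{-m}$, and its $\phi$-derivative energy $\int\theta^{-2m-1}\,d\theta$ diverges, so it is excluded. The remaining admissible solution is $O(\theta^m)$, which makes every term finite and kills the boundary contribution at $\theta=0$.
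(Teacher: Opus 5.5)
Your proposal is correct and follows essentially the same route as the paper. Both arguments compare the mode-$m$ quadratic form with the nonnegative $m=1$ form $\mathfrak{Q}_1$ from the preceding Lemma. The extra term $(m^2-1)\sigma\int_0^{\pi/2}\frac{\rho_0}{S_0\sin\theta}v^2\,d\theta$, which is strictly positive for $v\not\equiv 0$, then forces $v\equiv0$. Your explicit treatment of the boundary term at $\theta=\pi/2$ via $\mathcal{B}(v)=0$, and of regularity at the axis (admissible solutions behave like $\theta^m$), fills in details the paper leaves implicit.
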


\begin{proof}
    Using the stability of $\rho_{0}$, we have
    \begin{align}
        \delta^{2}\mathcal{F}(\varphi,\varphi)\geq 0
    \end{align}
    for any $\varphi\in H^{1}$ such that
    \begin{align}
        \int_{0}^{\frac{\pi}{2}}\rho_{0}^{2}\varphi\sin\theta d\theta=0
    \end{align}

    We first consider the case $m=2$. Suppose that
    \begin{align}
        \varphi=u(\theta)\cos2\phi
    \end{align}
    Using the previous lemma, we have
    \begin{align}
        <L_{1}u,u>=\mathfrak{Q}_{1}(u)\geq 0
    \end{align}
    \noindent for any $u\in H^{1}$ satisfying the boundary condition. Using the fact that 
    \[
L_2
=
L_1
+
\frac{3\sigma\rho_0}{S_0\sin^2\theta},
\]
we have 
\[
<L_{2}u,u>\geq <\frac{3\sigma\rho_0}{S_0\sin^2\theta}u,u>=\int_{0}^{\frac{\pi}{2}}\frac{3\sigma\rho_0}{S_0\sin\theta}u^{2}~d\theta>0,
\]
which implies that there is no nontrivial solution to the equation
\begin{align}
    L_{2}u=0.
\end{align}
Using a similar argument, there is no nontrivial solution to the following equation
\begin{align}
    L_{m}u=0
\end{align}
for arbitrary $m\geq 2$. This completes the proof.
\end{proof}

Finally, combining all the results from Section 2 to Section 6, we derive the following theorem.

\begin{theorem}[Kernel of the second variation]
Assume that
\[
\frac{dV}{d\lambda}|_{\lambda=\lambda_{0}}\neq 0.
\]
Then the kernel of the second variation of the energy functional \(\mathcal{F}\) is generated only by the horizontal translation modes. Equivalently,
\[
\ker \delta^{2}\mathcal{F}=
\operatorname{span}\{\frac{S_{0}}{\rho_{0}}u_x,\frac{S_0}{\rho_{0}}u_y\},
\]
where \(u_x\) and \(u_y\) denote the infinitesimal translation modes in the \(x\)- and \(y\)-directions, respectively.

\end{theorem}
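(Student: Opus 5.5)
The plan is to assemble the mode-by-mode results of Sections 4--6 into a statement about the full two-variable kernel. Let $\eta\in H^1(\mathbb S^2_+)$ lie in the kernel, meaning $\delta^2\mathcal F(\eta)=0$ together with the linearized volume constraint. By Lemma~\ref{lem:ODE} and the subsequent theorem of Section 3, there is a constant $c$ such that $\eta$ weakly solves \eqref{eq:kernel_pde} with the boundary condition \eqref{eq:kernel_bdd}.

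The first step is to expand $\eta$ in azimuthal Fourier series,
\[
\eta=u_0(\theta)+\sum_{m\ge1}\bigl(a_m(\theta)\cos m\phi+b_m(\theta)\sin m\phi\bigr).
\]
The coefficients of \eqref{eq:kernel_pde}, of \eqref{eq:kernel_bdd} and of the volume functional are all $\phi$-independent. Hence testing the weak formulation against $\psi(\theta)\cos m\phi$ and $\psi(\theta)\sin m\phi$ decouples it. For $m\ge1$ these test functions are automatically volume-admissible, so we obtain:
\begin{itemize}
\item $L_m a_m=0$ and $L_m b_m=0$ with $\mathcal B=0$ for each $m\ge1$;
\item $L_0u_0=c\rho_0^2$ with $\mathcal B[u_0]=0$, because the right-hand side $c\rho_0^2$ is axisymmetric and lives only in the $m=0$ mode;
\item the volume constraint involves only $u_0$.
\end{itemize}
Each amplitude inherits $H^1$ regularity in the weighted sense: $\int p|u_m'|^2+m^2\sin^{-1}\rho_0S_0^{-1}u_m^2<\infty$, by Parseval applied to the $H^1$ norm.

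The second step treats each mode with the earlier results.
\begin{itemize}
\item For $m=0$, the triviality theorem of Section 4, which uses $dV/d\lambda\neq0$, gives $u_0\equiv0$ and $c=0$.
\item For $m=1$, the uniqueness theorem of Section 5 gives $a_1=C_1\varphi_1$ and $b_1=C_2\varphi_1$; the singular solution $\varphi_2$ is excluded by the $H^1$ requirement.
\item For $m\ge2$, the comparison theorem of Section 6 gives $a_m=b_m=0$.
\end{itemize}
Summing the modes yields $\eta=(C_1\cos\phi+C_2\sin\phi)\varphi_1$, which equals $\frac{S_0}{\rho_0}(C_1u_x+C_2u_y)$. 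This gives the inclusion $\ker\subseteq\operatorname{span}$.

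For the converse inclusion, I would check that $\varphi_1\cos\phi$ and $\varphi_1\sin\phi$ lie in $H^1$, since $\varphi_1\sim A\theta$ at the axis. They are volume-preserving because $\int\cos\phi\,d\phi=0$. They satisfy $B(\eta,\psi)=0$ for all $\psi$: integrating by parts as in Section 3 and using $L_1\varphi_1=0$ and $\mathcal B[\varphi_1]=0$, the bulk term vanishes and the equator boundary term cancels against the contact term. Finally, the two functions are linearly independent.

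The main obstacle is the $m\ge2$ step. As written, the argument of Section 6 needs $\langle L_1u,u\rangle=\mathfrak Q_1[u]\ge0$, and then uses the positive surplus $3\sigma\rho_0/(S_0\sin\theta)$, or $(m^2-1)$ times this term in general, to force $\langle L_m u,u\rangle>0$ for $u\ne0$. One must also justify that pairing $L_mu=0$ with $u$ in the weak form reproduces exactly $\mathfrak Q_m[u]$, boundary term included. This holds because \eqref{eq:kernel_bdd} is the natural boundary condition of the form $B$. Then $0=\mathfrak Q_m[u]\ge(m^2-1)\int\sigma\rho_0S_0^{-1}\sin^{-1}\theta\,u^2>0$ unless $u=0$. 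I would make this identity explicit rather than rely on the formal $\langle\cdot,\cdot\rangle$ notation.
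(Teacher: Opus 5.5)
Your proposal is correct and follows the paper's route: the paper proves this theorem only by combining the Section~3 kernel equation with the mode-by-mode results ($m=0$ trivial under $dV/d\lambda\neq 0$, $m=1$ spanned by $\varphi_1$, and $m\geq 2$ trivial by comparison with $\mathfrak Q_1\geq 0$). Your additions are the decoupling of the weak form by testing against $\psi(\theta)\cos m\phi$ and $\psi(\theta)\sin m\phi$, the check of the reverse inclusion, and reading $B(\eta,\eta)=\pi\mathfrak Q_m[u]$ directly from the weak form for $m\geq 2$; these fill in details the paper leaves implicit rather than forming a different argument.
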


Using the preceding kernel characterization, we obtain the following spectral-gap theorem for the second variation. This result gives the spectral stability of the equilibrium and, in particular, provides the linear stability mechanism for the dynamic droplet problem.

\begin{theorem}[Spectral gap modulo horizontal translations]
Let $\rho_{0}=\rho_{0}(\theta)$ be a smooth axisymmetric sessile-drop
equilibrium satisfying the assumptions of Theorem~\ref{thm:main}; in
particular, assume that
\[
\left.\frac{dV}{d\lambda}\right|_{\lambda=\lambda_{0}}\neq 0 .
\]
Let $\delta^{2}F[\rho_{0}]$ denote the constrained second variation of the
gravity--capillary energy at $\rho_{0}$, and let
\[
\mathcal X
:=
\left\{
\eta\in H^{1}(S^{2}_{+}) :
\int_{S^{2}_{+}}\rho_{0}^{2}\eta\,d\omega=0
\right\}
\]
be the tangent space to the fixed-volume constraint. Define the two
horizontal translation modes by
\[
\tau_{1}(\theta,\phi)
=
\left(
\sin\theta-\frac{\rho_{0}'(\theta)}{\rho_{0}(\theta)}\cos\theta
\right)\cos\phi,
\qquad
\tau_{2}(\theta,\phi)
=
\left(
\sin\theta-\frac{\rho_{0}'(\theta)}{\rho_{0}(\theta)}\cos\theta
\right)\sin\phi .
\]
Then there exists a constant $\mu>0$ such that, for every
$\eta\in\mathcal X$ satisfying the orthogonality conditions
\[
\int_{S^{2}_{+}}\rho_{0}^{2}\eta\,\tau_{1}\,d\omega=0,
\qquad
\int_{S^{2}_{+}}\rho_{0}^{2}\eta\,\tau_{2}\,d\omega=0,
\]
one has
\[
\delta^{2}\mathcal{F}[\rho_{0}](\eta,\eta)
\geq
\mu \|\eta\|_{H^{1}(S^{2}_{+})}^{2}.
\]
\end{theorem}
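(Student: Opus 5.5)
The plan is to upgrade the kernel characterization (Theorem on the kernel of the second variation) to coercivity through the standard route of a compactness-contradiction argument combined with a Gårding-type inequality.

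\textbf{Step 1: Gårding inequality.} Write $B(\eta,\eta)=\delta^2\mathcal F[\rho_0](\eta,\eta)$ in the form \eqref{second_variation_bilinear}. The principal part has coefficients $\sigma\rho_0^3/S_0^3$ and $\sigma\rho_0/S_0$. Since $\rho_0$ is smooth and positive on $[0,\pi/2]$, these are bounded below by a positive constant. The zeroth-order coefficient $Q$ is bounded. The boundary term $\int_0^{2\pi}\eta(\pi/2,\phi)^2d\phi$ is controlled by the trace inequality $\|\eta\|_{L^2(\partial)}^2\le \epsilon\|\nabla\eta\|_{L^2}^2+C_\epsilon\|\eta\|_{L^2}^2$. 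Together these give
\[
B(\eta,\eta)\ge c_0\|\eta\|_{H^1}^2-C_1\|\eta\|_{L^2}^2 .
\]

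\textbf{Step 2: Contradiction argument.} Let $\mathcal Y\subset\mathcal X$ be the closed subspace cut out by the two orthogonality conditions. Suppose no $\mu>0$ exists. Then there are $\eta_n\in\mathcal Y$ with $\|\eta_n\|_{H^1}=1$ and $B(\eta_n,\eta_n)\to0$, using $B\ge0$ on $\mathcal X$. By Rellich compactness on the hemisphere, pass to a subsequence with $\eta_n\rightharpoonup\eta$ weakly in $H^1$ and strongly in $L^2$ and in $L^2$ of the boundary trace, since the trace map is compact. The constraints are closed under weak convergence, so $\eta\in\mathcal Y$. Step 1 gives $c_0\le C_1\|\eta_n\|_{L^2}^2+o(1)$, hence $\eta\neq0$.

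Next, $B$ equals a positive-definite principal part plus compact terms, and the principal part is weakly lower semicontinuous. Therefore $B(\eta,\eta)\le\liminf B(\eta_n,\eta_n)=0$. Combined with $B\ge0$ on $\mathcal X$, this gives $B(\eta,\eta)=0$.

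\textbf{Step 3: Identification via the kernel theorem.} By Lemma~\ref{lem:ODE}, $\eta$ lies in the constrained kernel, so $\eta$ is a weak solution of \eqref{eq:kernel_pde}–\eqref{eq:kernel_bdd} for some constant $c$. The kernel theorem, which relies on $dV/d\lambda\neq0$, then gives $\eta=\alpha\tau_1+\beta\tau_2$.

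To conclude, test the orthogonality conditions against $\eta$ itself. Because of the $\cos\phi$ and $\sin\phi$ factors, the pairings $\int\rho_0^2\tau_1\tau_2\,d\omega$ vanish. Moreover, $\int\rho_0^2\tau_i^2\,d\omega>0$. Hence $\alpha=\beta=0$, contradicting $\eta\neq0$.

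\textbf{Main obstacle.} I expect the delicate point to be applying the kernel theorem to a weak $H^1$ limit. The mode-by-mode analysis uses ODE regularity and the exclusion of singular solutions at the axis $\theta=0$. I will handle this by Fourier-decomposing $\eta$ in $\phi$: the coefficients do not depend on $\phi$, so the weak equation decouples into weak ODEs for each mode. Each $u_m$ lies in the weighted $H^1$ space, and standard ODE regularity away from $\theta=0$ places $u_m$ in the two-dimensional solution space. The non-admissibility results of Sections 4–5 (the logarithmic and $1/\theta$ singularities) then apply directly, and the $m\ge2$ modes vanish by the strict positivity argument.

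A secondary point is to make sure the Gårding constant is uniform despite the $1/\sin\theta$ weight in the angular term. That weight only helps, since it is positive and bounded below, so it can simply be discarded in the lower bound.
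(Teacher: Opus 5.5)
Your proposal is correct and is essentially the paper's argument: the paper only invokes a Poincar\'e-type compactness--contradiction argument (deferring details to Theorem~5.16 of its companion work), and that argument consists of the same steps you give, namely a G\aa rding bound, Rellich and trace compactness, weak lower semicontinuity to get $B(\eta,\eta)=0$ for a nonzero limit, identification of the limit via the kernel theorem, and the orthogonality conditions to force $\eta=0$. One small correction to your final remark: you should not discard the $1/\sin\theta$ weight in the angular term, because with respect to $d\theta\,d\phi$ that weight is exactly what makes the term equal to $\sigma\frac{\rho_0}{S_0}\sin^{-2}\theta\,|\partial_\phi\eta|^2\,d\omega$, the angular part of the $H^1(S^2_+)$ norm, and keeping it (as your Step~1 implicitly does) is what yields the G\aa rding inequality.
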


\begin{proof}
    The proof follows from a Poincar\'e-type argument. For details, we refer the reader to Theorem~5.16 in \cite{Yang2026GlobalDynamic}.
\end{proof}

\begin{center}
        {\large A}CKNOWLEDGEMENTS
    \end{center}

    The author thanks his advisor Yan Guo for numerous comments. His mentorship and constructive feedback contribute significantly to the development of this work.

    This work is supported in part by NSF Grant DMS-2405051. 
\section*{Conflict of Interest}
The authors declare no conflict of interests.
\section*{Data Availability}
No data were generated or analyzed during this study.

 \bibliographystyle{plain}
\bibliography{sample}

\end{document}